\pgfplotsset{compat=1.17}
\newtheorem{theorem}{Theorem}[section]
\newtheorem{lemma}[theorem]{Lemma}
\newtheorem{proposition}[theorem]{Proposition}
\newtheorem{corollary}[theorem]{Corollary}
\theoremstyle{definition}
\newtheorem{assumption}[theorem]{Assumption}
\newtheorem{notation}[theorem]{Notation}
\newtheorem{definition}[theorem]{Definition}
\newtheorem{remark}[theorem]{Remark}
\theoremstyle{remark}
\numberwithin{equation}{section}
\crefname{example}{Example}{Examples}
\Crefname{example}{Example}{Examples}
\crefname{assumption}{Assumption}{Assumptions}
\Crefname{assumption}{Assumption}{Assumptions}
\crefname{condition}{Condition}{Conditions}
\Crefname{condition}{Condition}{Conditions}
\newmdtheoremenv{ftheorem}[theorem]{Theorem}
\newmdtheoremenv{fdefinition}[theorem]{Definition}
\newmdtheoremenv{flemma}[theorem]{Lemma}
\setlist{topsep=1ex, itemsep=0.5ex, before={\setlist{topsep=-.5ex}}}
\setlist[itemize]{label=\textbullet}
\DeclareMathOperator{\trace}{tr}
\newcommand{\R}{\ensuremath{\mathbb{R}}}
\newcommand{\one}{{{\mathbbm 1}}}
\def\f{\frac}
\def\epsilon{\varepsilon}
\NewDocumentCommand{\Lip}{om}{\IfNoValueTF{#1}{|#2|_{\mathrm{Lip}}}{|#2|_{\mathrm{Lip};\,#1}}}
\def\E{\mathbb E}
\newcommand{\II}{\ensuremath{I \! \! I}}
\renewcommand{\geq}{\geqslant}
\renewcommand{\leq}{\leqslant}
\def\${|\!|\!|}
\def\<{\left\langle}
\def\>{\right\rangle}
\newcommand{\vertiii}[1]{{\left\vert\kern-0.25ex\left\vert\kern-0.25ex\left\vert #1
\right\vert\kern-0.25ex\right\vert\kern-0.25ex\right\vert}}
\newcommand{\rom}[1]{(\textup{\uppercase\expandafter{\romannumeral#1}})}
\def\sgn{{\mathop {\rm sign}}}
\newcommand{\substackal}[1]{%
\vcenter{%
\Let@ \restore@math@cr \default@tag
\baselineskip\fontdimen10 \scriptfont\tw@
\advance\baselineskip\fontdimen12 \scriptfont\tw@
\lineskip\thr@@\fontdimen8 \scriptfont\thr@@
\lineskiplimit\lineskip
\ialign{\hfil$\m@th\scriptstyle##$&$\m@th\scriptstyle{}##$\hfil\crcr
#1\crcr
}%
}%
}
\newcommand{\p}{\partial}
\newcommand{\ps}{\frac{\partial}{\partial s}}
\newcommand{\pss}{\frac{\partial^2}{\partial s^2}}
\newcommand{\pt}{\frac{\partial}{\partial t}}
\newcommand{\Dt}{\frac{D}{d t}}
\newcommand{\Dps}{\frac{D}{\partial s}}
\newcommand{\Ds}{\frac{D}{d s}}
\newcommand{\DDt}{\frac{D^2}{dt^2}}
\newcommand{\la}{\left\langle}
\newcommand{\ra}{\right\rangle}
\newcommand{\lc}{\left(}
\newcommand{\rc}{\right)}
\newcommand{\li}{\left|}
\newcommand{\ri}{\right|}
\newcommand{\Ric}{\mathrm{Ric}}
\newcommand{\vol}{\mathrm{vol}}
\newcommand{\Hess}{\mathrm{Hess}}
\newcommand{\newparallel}{\mathrel{\mathpalette\new@parallel\relax}}
\newcommand{\new@parallel}[2]{%
  \begingroup
  \sbox\z@{$#1T$}
  \resizebox{!}{\ht\z@}{\raisebox{\depth}{$\m@th#1/\mkern-5mu/$}}%
  \endgroup
}
\def\Xint#1{\mathchoice
{\XXint\displaystyle\textstyle{#1}}%
{\XXint\textstyle\scriptstyle{#1}}%
{\XXint\scriptstyle\scriptscriptstyle{#1}}%
{\XXint\scriptscriptstyle\scriptscriptstyle{#1}}%
\!\int}
\def\XXint#1#2#3{{\setbox0=\hbox{$#1{#2#3}{\int}$ }
\vcenter{\hbox{$#2#3$ }}\kern-.6\wd0}}
\def\dashint{\Xint-}
\definecolor{LB}{rgb}{0.29, 0.63, 0.73}
\begin{document}

\title{
Coarse Ricci curvature of weighted Riemannian manifolds}
\author{Marc Arnaudon \thanks{Univ. Bordeaux, CNRS, Bordeaux INP, IMB, UMR 5251,
F-33400 Talence, France. \newline  \texttt{marc.arnaudon@math.u-bordeaux.fr}}, Xue-Mei Li \thanks{ Dept. of Maths., Imperial College London, U.K.  \& EPFL, Switzerland \newline  \texttt{xue-mei.li@imperial.ac.uk} or  \texttt{xue-mei.li@epfl.ch} }, Benedikt Petko\thanks{Dept. of Maths.,  Imperial College London, U.K. \newline  \texttt{benedikt.petko15@imperial.ac.uk}}}
\date{\today}
\maketitle

\begin{abstract}
We show that the generalized Ricci tensor of a weighted complete Riemannian manifold can be retrieved asymptotically from a scaled metric derivative of Wasserstein 1-distances between normalized weighted local volume measures. As an application, we demonstrate that the limiting coarse curvature of random geometric graphs sampled from Poisson point process with non-uniform intensity converges to the generalized Ricci tensor. 
\end{abstract}

{\it Mathematics Subject Classification:} 60Dxx, 60Bxx, 53-xx.

\tableofcontents

\section{Introduction and result}

In Riemannian geometry, the Riemann curvature tensor is closely related to the parallel transport of vectors along a curve, while the Ricci curvature is obtained by taking the trace of this tensor. The Ricci curvature is descriptive of properties of the manifold pertaining to volume. For example, the Bishop-Gromov comparison theorem relates global lower bounds of the Ricci curvature to the volume growth of geodesic balls on a manifold as the radius of a ball increases.


We briefly recall the standard objects playing a central role in the present work. A standard reference for geometric analysis is \cite{MR3726907}. Let $M$ be a complete Riemannian manifold of dimension $n$.  Denoting the $C^\infty(M)$-module of smooth vector fields as $\Gamma(TM)$, a linear connection is a bilinear map
$
\nabla: \Gamma(TM) \times \Gamma(TM) \rightarrow \Gamma(TM)$
satisfying the Leibniz rule, and should be interpreted as a rule for differentiating vector fields on the manifold. It is in one-to-one correspondence with parallel transport of vectors along curves as follows. For a given connection $\nabla$ and a smooth curve $t \mapsto \gamma_t$, we denote by $\newparallel_t: T_{\gamma_0}M \rightarrow T_{\gamma_t}M$ the map such that for every $v \in T_{\gamma_0}M$, $t \mapsto \newparallel_t v$ is the solution to the covariant differential equation
$$
\nabla_{\dot{\gamma}_t} (\newparallel_t v) = 0, \quad \newparallel_0 v = v,
$$
where we employ the commonly used abuse of notation by identifying $\nabla$ with the pull-back connection $\gamma^*\nabla$. This defining equation is commonly referred to as the parallel condition for $t \mapsto \newparallel_t v$ and $\newparallel_t$ is called the parallel transport map along $\gamma$.

Conversely, given a parallel transport $\newparallel_t: T_{\gamma_0}M \rightarrow T_{\gamma_t}M$ for any smooth curve $t\mapsto \gamma_t$, a linear connection can be defined for any $v \in T_{\gamma_0}M$ and smooth vector field $U$ on a neighbourhood of $x$ as
$$
\nabla_v U (x) = \lim_{t \rightarrow 0} \f{\newparallel_t^{-1} U(\gamma_t) - U(x)}{t}, \quad \dot{\gamma}_0 = v,
$$
where the limit takes place in the inner product space $T_xM$.

Throughout the article we employ the Levi-Civita connection, which is the unique metric and torsion free linear connection on the tangent bundle. The Riemann curvature tensor quantifies how much the vector field differentiation $(X,Y) \to \nabla_{X}\nabla_{Y}Z$
fails to commute. Precisely, the Riemann curvature tensor is defined for any three smooth vector fields $X, Y$ and $Z$ as
$$
R_{x}(X,Y)Z:=\nabla_X (\nabla_YZ)(x)- \nabla_Y(\nabla_XZ)(x),
$$
which also sets the sign convention for the Riemann curvature tensor for the remainder of this work.
Since this is a tensorial object, i.e. a $C^\infty(M)$-linear map, it is valid to define $R_x(u,v)w$ for any three vectors $u,v,w\in T_xM$. The Ricci curvature at a point $x$ is given by the expression
$$
\Ric_x(u,v):=-\trace R_x(u, \cdot)v = -\sum_{i=1}^n \< R_x(u,e_i)v, e_i\> = \sum_{i=1}^n \< R_x(u,e_i)e_i, v\> 
$$ 
for an arbitrary orthonormal basis $(e_i)_{i=1}^n$ of $T_xM$. For the last equality we applied the well-known antisymmetry of the Riemann tensor. In the sequel we mostly omit the basepoint superscript for tensor fields as the basepoint is usually clear from the context. 

The covariant derivative for a smooth vector field $Z$ can be modified by damping:
$\Ric- \nabla Z$.
This is known as the generalised Ricci curvature tensor and is related to conformally changing the Riemannian metric. In the classical case of $Z=0$, this condition was studied extensively in the renowned works \cite{Cheng-Yau75,Li-Yau86,Hamilton93,Ni-Tam}.
When $Z=-\nabla V$, i.e. we have a gradient field, global lower bounds on  $\Ric + 2 \Hess \; V$ are widely known as the Bakry-\'Emery criterion \cite{BE}, which provides the geometrically correct notion of global lower curvature bounds for weighted Riemannian manifolds. The Bakry-\'Emery criterion is equivalent to the so-called curvature-dimension condition for the operator $\f{1}{2}\Delta - Z$, which now has a vast literature building on it in more general settings,  
and especially on metric measure spaces \cite{Lott-Villani, Sturm06, Ambrosio-Gigli-Savare-14} and also on sub-Riemannian manifolds \cite{Juillet, Agrachev-Lee,MR3175142}. 
Less known is the variable bound version of the Bakry-\'Emery criterion, which was  successfully explored in \cite{Li,Wei-Wylie}.
In the context of stochastic analysis, the generalised Ricci tensor is closely related to damped parallel transport along Brownian paths which was studied by X.-M. Li, see \cite{MR4304478, MR3828180}. Denoting $\Ric^{\sharp}: TM\to TM$ as the bundle map uniquely associated with the Ricci tensor $\Ric: TM\times TM\to \R$ by the relation
$$
\<\Ric^\#(u),v \> = \Ric(u,v) \quad \forall u,v \in T_xM,
$$
and denoting by $(x_t)_{t \geq 0}$ the Brownian motion on $M$, 
the properties of the solution $t \mapsto W_t \in T_{x_t}M$ to the stochastic damped parallel equation
$$
\nabla_{\dot{\gamma}_t} W_t = -\f{1}{2} \Ric_{x_t}^\# W_t - \nabla_{W_t} \nabla V, \quad W_0 = \mathrm{Id},
$$
were used to obtain novel bounds for the Schr\"odinger semigroup and the Hessian of the fundamental solution of the Schr\"odinger operator.

The approach of characterizing the Ricci curvature using optimal transport distances of probability measures has been referred to in the literature as "coarse curvature" or "Wasserstein curvature" of the underlying space. The main motivation for considering such characterisation is that Wasserstein distances only require a metric structure to be defined, hence such notion can apply to spaces without a manifold structure. 

Specifically, we extend the approach of Ollivier \cite{OLLIVIER2007,MR2484937}. We define a coarse generalised Ricci curvature, presenting a rigorous proof which includes Ollivier's result as a special case.
The original definition of coarse Ricci curvature of Ollivier was motivated by Riemannian geometry, concretely by the observation that parallel geodesics starting from nearby points converge in positive sectional curvature planes and diverge in negative curvature planes, see \cite{MR3060504} for a visual introduction.

The advantage of Ollivier's notion of Wasserstein curvature is twofold. First, it can be explicitly computed on a number of examples, in particular graphs. Second, on Riemannian manifolds it allows for the approximation or retrieval of the Ricci curvature at each point. With this and the application to curvature of random geometric graphs in mind, we consider the case of a Riemannian manifold with a potential. As a novel element, we consider the Wasserstein distances of probability-normalized weighted volume measures on small geodesic balls to account for a smooth potential on the manifold and recover the smooth generalized Ricci curvature of Riemannian manifolds from a scaled metric derivative of Wasserstein distances of such measures.

The phenomenon underlying Wasserstein curvature is referred to in broader literature as Wasserstein contractivity, which was first established as a characterization of global lower Ricci curvature bounds on Riemannian manifolds in \cite{MR2142879}. Joulin \cite{MR2543873} defined the Wasserstein curvature in terms of the contraction ratio of the Wasserstein distances between Markov kernels. Further works on Wasserstein contractivity include e.g. \cite{MR2502429,Kuwada,Alfonsi-Corbetta-Jourdain}.

The Ricci curvature of a Riemannian manifold measures the expansion of the Riemannian volume, which is quantified by the Bishop-Gromov theorem. The relation between curvature, volume growth and optimal transport was also hinted at in \cite{Cordero-Erausquin-McCann-Schmuckenschlager, Ambrosio-Rigot,Brenier}.
Therefore, it is not surprising that Ollivier's definition of coarse Ricci curvature begins with a family of uniform volume measures indexed by points in the manifold. Such a family of measures can also be interpreted as a random walk.

Coarse Ricci curvature of random geometric graphs was studied in \cite{hoorn-2023}, where graphs generated by a Poisson point process with increasing intensity were considered. As a first result of this type for coarse Ricci curvature, the main result of their work is that under some assumptions on the relationship between sampling intensity, connectivity radius and random walk radius, the coarse curvature of random geometric graphs converges in $L^1$ at every point to the Ricci curvature of the underlying manifold. 

Our notion of generalised Ricci curvature extends the approach of Ollivier. We then follow up with an exploration of the concept of random graphs generated by connecting nearby points of a Poisson point process, as previously studied by van~der Hoorn, Cunningham,  Lippner, Trugenberger, and Krioukov \cite{hoorn-2023}. In our approach, we choose the weighted volume measures for the intensity measure of the Poisson process.

As a continuation of work on coarse curvature, the authors of the present article defined a notion of coarse extrinsic curvature in the setting of Riemannian submanifolds embedded in Euclidean spaces, and showed it to contain a meaningful extrinsic geometric information in the preprint \cite{arnaudon2024coarseextrinsiccurvatureriemannian}.

We briefly mention the literature on non-smooth curvature preceding the work of Ollivier. A notion of non-smooth sectional curvature bounds was first introduced by Alexandrov, beginning in the 1950s, see e.g. \cite{MR1185284} on the study of Alexandrov curvature lower bounds and more comprehensively the monograph \cite{MR1835418}. Non-smooth curvature has since been studied from various perspectives.
The entropy convexity approach to Ricci curvature lower bounds of metric measure spaces was studied in the seminal works \cite{Sturm2, Sturm1,Lott-Villani}.
We point out that Ollivier's approach in \cite{OLLIVIER2007,MR2484937} described differs from entropy convexity in that it approximates the value of the Ricci curvature at every point, rather than describing only a Ricci curvature global lower bound. Ollivier's coarse curvature on graphs was further studied in \cite{Lin-Yau, MR2872958,Bauer-Jost-Liu12, MR3164168}. An alternative notion of non-smooth curvature on graphs termed "Ricci flatness" was proposed in \cite{MR1426537}, presented in more detail also in the monograph \cite{MR1421568}. The concept of coarse curvature was also extended to non-commutative transportation cost in \cite{Gao-Rouze}. Recently, coarse curvature has found applications in manifold learning and community detection, see e.g. \cite{Ache-Warren, Wu-Hu-Yu, Sia-Jonckheere-Bogdan}.

\subsection*{Main results}

For any $x \in M$ and $\varepsilon >0$ we denote the uniform probability measure supported on the geodesic ball $B_\varepsilon(x)$ as
$$
\begin{aligned}
d\mu^\varepsilon_x(z) &:= \f{\one_{B_\varepsilon(x)}(z)}{\vol(B_\varepsilon(x))} d\vol(z),
\end{aligned}
$$
where $\vol$ is the standard Riemannian volume measure.

Let $V: M \rightarrow \R$
be a smooth potential and $e^{-V(z)}d\vol(z)$ the corresponding weighted measure on $M$.  For any $x \in M$, define the (non-uniform) probability measure supported on the geodesic ball $B_\varepsilon(x)$,
$$
\begin{aligned}
d\nu_x^\varepsilon(z) &:= 
\one_{B_\varepsilon(x)}(z) \f{e^{-V(z)}}{\int_{B_\varepsilon(x)} e^{-V(z')}d\vol(z')}d\vol(z).
\end{aligned}
$$
Our main results are \cref{thm:0} and \cref{graph-continuum-limit-curvature}. The first allows to extract the generalized Ricci tensor from 1-Wasserstein distances of such measures:
\begin{theorem}
\label{thm:0}
For any point $x_0 \in M$, vector $v \in T_{x_0}M$ with $\|v\| =1$, sufficiently small $\delta, \varepsilon > 0$ and $y := \exp_{x_0}(\delta v)$, it holds that
\begin{equation}
\label{eq:expansion-0}
W_1(\nu_{x_0}^\varepsilon, \nu_y^\varepsilon) = \delta\left(1-\f{\varepsilon^2}{2(n+2)}
\left(\Ric_{x_0}(v,v)+2\Hess_{x_0}V(v,v)\right)\right) 
+ O(\delta^2 \varepsilon^2) + O(\delta \varepsilon^3).
\end{equation}
\end{theorem}

Denoting the coarse curvature at scale $\varepsilon$ as
\begin{equation}
\kappa_\varepsilon(x_0,y) := 1 -\f{W_1(\nu_{x_0}^\varepsilon,\nu_y^\varepsilon)}{d(x_0,y)},
\end{equation}
we deduce upon rearrangement of the expansion (\ref{eq:expansion-0}) and taking the limit that
\begin{equation}
\label{coarse-ricci}
\lim_{\varepsilon, \delta \rightarrow 0} \f{2(n+2)}{\varepsilon^2} \kappa_\varepsilon(x_0,y) = 
\Ric_{x_0}(v,v)+2 \Hess_{x_0}V(v,v).
\end{equation}
We present a detailed proof of \cref{thm:0}, which extends the result of Ollivier \cite[Example 7 \& Section 8]{MR2484937}.

As an application, in Theorem \ref{graph-continuum-limit-curvature} we  extend a result of Hoorn et al. \cite{hoorn-2023}, which showed that the coarse curvature of random geometric graphs sampled from a Poisson point process with increasing intensity, proportional to the non-uniform measure $e^{-V(z)} \vol(dz)$, converges to the smooth Ricci curvature modified by the Hessian of $V$. Our method allows to deal with the non-uniformity of the intensity of the Poisson process as well as the non-uniformity incurred by the exponential mapping.

\begin{remark}
We qualify the term "sufficiently small" for $\delta, \varepsilon$ in Theorem \ref{thm:0}. In all arguments, we will assume that $\delta, \varepsilon$ are as small as needed in a way only dependent on a compact neighbourhood of $x_0 \in M$. The need for such restriction is for two reasons:
\begin{itemize}
\item in manifold distance estimates of Section \ref{section:manifold-distance-estimates} to
ensure that all geodesics in the variations used are length-minimizing and unique. This can
be done by restricting $\delta, \varepsilon$ to some small enough fraction of the uniform
injectivity radius at $x_0$,
\item in Wasserstein distance estimates of Section \ref{section:wasserstein-approximations}, to apply the Inverse Function Theorem for the transport map $T$ which has non-zero determinant at $x_0$.
\end{itemize}
We will assume throughout this work that such restrictions are in place and are covered implicitly by the "sufficiently small" assumption for $\delta, \varepsilon$.
\end{remark}

\begin{remark}
Ollivier \cite[Example 5]{MR2484937} presented a similar result by using uniform measures shifted in the direction of $-\nabla V(x)$ for a uniform measure centered at $x \in M$. Nonetheless, to obtain this, the correct magnitude for this shift is chosen a posteriori. This is in contrast to our method of non-uniform measures, which does not allow this degree of freedom and can therefore be seen as more intrinsic to the weighted manifold. We emphasize the distinction in that our method employs non-uniform measures rather than uniform. Moreover, our approach is more suitable for our application to random geometric graphs sampled from a Poisson point process with non-uniform intensity.
\end{remark}

We now follow through with two ingredients needed for proving \cref{thm:0}, manifold distance and Wasserstein distance estimates, presented in Sections \ref{section:manifold-distance-estimates} and \ref{section:wasserstein-approximations}, respectively. The application to random geometric graphs constitutes \cref{rgg-application}. The clearly presented intermediate geometric estimates could be of independent interest.

\bigskip

{\bf  Acknowledgement.} This research has been supported by the EPSRC Centre for Doctoral Training in Mathematics of Random Systems: Analysis, Modelling and Simulation (EP/S023925/1). XML acknowledges partial support from the EPSRC (EP/S023925/1 and EP/V026100/1).

\section{Manifold distance estimates}
\label{section:manifold-distance-estimates}

The 1-Wasserstein distance of measures is by definition the minimum of an average of distances of pairs of points on the manifold. Variations of geodesics are a standard tool for local estimation of distances on Riemannian manifolds. We introduce preliminary notation for this section.

\begin{notation}
\label{notation-2}
Let $v \in T_{x_0}M$ be a unit vector. The maps $c: [0, \epsilon] \times [0,1] \rightarrow M$ will denote various smooth variations, to be specified, of the geodesic $\gamma:[0,1]\rightarrow M$,
$$
\gamma(t) := \exp_{x_0}(t\delta v)
$$
so that $c(0,t) = \gamma(t)$. Denote the Jacobi field corresponding to the variation $c$ as
$$
J(t) := \left.\frac{\partial}{\partial s}\right|_{s=0}c(s,t)
$$
and the covariant derivatives of $J$ with respect to $\dot{\gamma}$ along $\gamma$ by $\Dt J(t), \f{D^2}{dt^2}J(t)$. Recall $J$ satisfies the Jacobi equation
$$
\DDt J(t) = - R(J(t),\dot{\gamma}(t))\dot{\gamma}(t),
$$
where $R$ is the Riemann curvature tensor. For any vector field $X$ defined along $\gamma$, we denote the part perpendicular to $\dot{\gamma}$ as 
$$
X^\perp := X - \f{1}{\delta^2}\< X, \dot{\gamma}\> \dot{\gamma}.
$$
Denote by $\newparallel_t  (\gamma): T_{x_0}M \rightarrow T_{\gamma(t)}M$ the parallel transport along the geodesic $\gamma$ with respect to the Levi-Civita connection. When there is no risk of confusion, we shall write $\newparallel_t$ for $\newparallel_t (\gamma)$, omitting mentioning the geodesic $\gamma$.

\end{notation}

We shall denote by $\|\cdot\|$ the Riemannian norm of tangent vectors. For the distance estimates we will make use of the standard formulas for first and second derivatives of the length 
$$
L(\varepsilon, \delta) := \int_0^1 \left\|\frac{\partial c(\varepsilon,t)}{\partial t}\right\| dt
$$
in terms of $J$ (see \cite[Chap. 6]{MR3726907}):
\begin{lemma}
The first two derivatives of the length $L$ in the first variable at $0$ are
\begin{align}
\label{eq:variation-order-1}
\left.\ps\right|_{s=0} L(s, \delta) &= \frac{1}{\delta} \left.\la J(t),\dot{\gamma}(t)
\ra\right|_{t=0}^{t=1},\\
\left.\pss\right|_{s=0}L(s, \delta) &= \f{1}{\delta}  \left[ \int_0^1 \left( \la \Dt J(t)^\perp,\Dt
J(t)^\perp \ra - \la R(J(t)^\perp,\dot{\gamma}(t))\dot{\gamma}(t), J(t)^\perp \ra \right) dt \right] \nonumber \\
& \qquad +\f{1}{\delta}\left.\la \Dps \left.\ps\right|_{s=0} c(s,t),\dot{\gamma}(t)
\ra \right|_{t=0}^{t=1}
\nonumber\\ \label{eq:variation-order-2}
&= \f{1}{\delta} \left.\la \Dt J(t)^\perp,J(t)^\perp \ra \right|_{t=0}^{t=1} +
\f{1}{\delta}\left.\la \Dps \left.\ps\right|_{s=0} c(s,t),\dot{\gamma}(t)\ra
\right|_{t=0}^{t=1}.
\end{align}
\end{lemma}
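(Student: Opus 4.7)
The plan is to differentiate the length functional $L(s,\delta)=\int_0^1 \|\pt c(s,t)\|\,dt$ under the integral sign twice, using Schwarz's symmetry $\Dps \pt c=\Dt \ps c$, the geodesic property $\Dt\dot\gamma=0$, and ultimately the Jacobi equation to convert an integral into a boundary term.

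For the first variation I would compute $\ps\|\pt c\|=\|\pt c\|^{-1}\la \Dps \pt c,\pt c\ra$, evaluate at $s=0$ using $\|\pt c(0,t)\|=\delta$ together with $\Dps \pt c|_{s=0}=\Dt J$, and note that since $\Dt\dot\gamma=0$ the integrand $\delta^{-1}\la \Dt J,\dot\gamma\ra=\delta^{-1}\dt\la J,\dot\gamma\ra$ is a total derivative in $t$. Integrating over $[0,1]$ then yields (\ref{eq:variation-order-1}).

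For the second variation I would differentiate once more to obtain
$$\pss\|\pt c\|=\frac{\la \Dps\Dps \pt c,\pt c\ra+\la \Dps \pt c,\Dps \pt c\ra}{\|\pt c\|}-\frac{\la \Dps \pt c,\pt c\ra^2}{\|\pt c\|^3}.$$
Applying Schwarz together with the curvature commutator $\Dps\Dt X-\Dt\Dps X=R(\ps c,\pt c)X$, at $s=0$ this gives $\Dps\Dps \pt c|_{s=0}=\Dt(\Dps \ps c)|_{s=0}+R(J,\dot\gamma)J$. Using the antisymmetry $\la R(J,\dot\gamma)J,\dot\gamma\ra=-\la R(J,\dot\gamma)\dot\gamma,J\ra$ and $\Dt\dot\gamma=0$, the $\Dt(\Dps \ps c)$ contribution integrates to the second boundary term in (\ref{eq:variation-order-2}). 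The orthogonal decomposition $J=J^\perp+\delta^{-2}\la J,\dot\gamma\ra\dot\gamma$ then yields $\|\Dt J\|^2-\delta^{-2}\la \Dt J,\dot\gamma\ra^2=\|\Dt J^\perp\|^2$, while $\la R(J,\dot\gamma)\dot\gamma,J\ra=\la R(J^\perp,\dot\gamma)\dot\gamma,J^\perp\ra$ (again by antisymmetry of $R$), producing the first displayed form of the second identity.

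To pass to the pure boundary form, I would integrate by parts
$$\int_0^1 \la \Dt J^\perp,\Dt J^\perp\ra\,dt=\la \Dt J^\perp,J^\perp\ra\Big|_0^1-\int_0^1 \la \DDt J^\perp,J^\perp\ra\,dt.$$
Since $\Dt\dot\gamma=0$ and $R(J,\dot\gamma)\dot\gamma\perp\dot\gamma$, one has $(\DDt J)^\perp=\DDt J^\perp$, and the Jacobi equation gives $\DDt J^\perp=-R(J^\perp,\dot\gamma)\dot\gamma$, so the two curvature integrals cancel exactly and only the boundary term remains. The main obstacle is the careful bookkeeping — tracking Schwarz's symmetry, the curvature commutator, the perpendicular projection, and the symmetries of $R$ — and verifying that the equivalence of the two forms of $\pss|_{s=0}L$ rests precisely on $J$ being a Jacobi field along $\gamma$, i.e.\ on the variation $c$ being through geodesics.
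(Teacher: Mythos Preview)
Your proposal is correct and follows the standard derivation of the first and second variation formulas. The paper itself does not supply a proof of this lemma; it is stated with a reference to \cite[Chap.~6]{MR3726907}, and your argument is precisely the textbook computation one finds there: differentiate under the integral, use the symmetry $\Dps\pt c=\Dt\ps c$ and the curvature commutator, split off the tangential component via $J=J^\perp+\delta^{-2}\la J,\dot\gamma\ra\dot\gamma$, and finally invoke the Jacobi equation after an integration by parts to collapse the integral to boundary terms. Nothing is missing.
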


We will consider two different variations $c_1$ and $c_2$ of the geodesic $\gamma(t) = \exp_{x_0} (t\delta v)$ which provide key distance estimates for the optimal transport problem in Theorem \ref{thm:0}. 

\subsection{Pointwise transport distance estimate}

We proceed with defining $c_1$, the purpose of which is to approximate pointwise transport distance by a certain transport map defined later. Using these pointwise distance estimates, we will be able to conclude an upper bound for the Wasserstein distance in \cref{thm:0}.

\begin{notation}
\label{transport-vector}
For any $v, w \in T_{x_0}M$ with $\|v\|=1$, $\|w\| \leq 1$ and the geodesic $\gamma=\exp_{x_0}(t \delta v)$,  we introduce the transport vector
\begin{equation}
\label{w-dash}
w' := w - \frac{\varepsilon}{2}(1-\|w\|^2) ( \newparallel_1^{-1}(\gamma) \; \nabla V(y)-\nabla V(x_0)) = w + O(\delta
\varepsilon),
\end{equation}
where we abuse the notation $O(\delta \varepsilon)$ to denote a vector of magnitude of order $\delta \varepsilon$.\\
Denote the geodesics
\begin{equation}
\label{eta-theta-def}
\theta(s) := \exp_{y} (s \newparallel_1 w'), \quad \eta(s) :=
\exp_{x_0} (sw),
\end{equation}
where $y=\exp_{x_0}(\delta v)$. Define the map 
$c_1: [0, \varepsilon] \times [0,1] \rightarrow M$ as
$$
c_1(s,t) = \exp_{\eta(s)} (t \exp^{-1}_{\eta(s)}(\theta(s))).
$$
This represents a family of geodesics indexed by $s$ and parametrized by $t$, starting from $\eta(s)$ and reaching $\theta(s)$ at $t=1$. In particular, it is a variation of the geodesic $t \mapsto \exp_{x_0}(t\delta v)$. See \cref{fig:c1} for an illustration of this variation. Note that 
$$
\theta(s) = c_1(s,1), \quad \eta(s) = c_1(s,0)
$$
are the bottom and the top curves in \cref{fig:c1} and for every $s \in [0,\varepsilon]$, $t \mapsto c_1(s,t)$ is a geodesic. Moreover, $\gamma(t)= \exp_{x_0}(t \delta v) = c_1(0,t)$ is the leftmost geodesic in the figure.
\end{notation}

The aim of this variation is to estimate the distance of the two corners $\theta(\varepsilon)$ and $\eta(\varepsilon)$ given by the variation $c_1$ of the geodesic $\gamma$ from $\eta$ to $\theta$. To emphasize the dependency on both $\delta$ and $\varepsilon$, we will denote the length
\begin{equation}
\label{length-1}
L_1(\varepsilon, \delta) := \int_0^1 
\left\|\frac{\partial c_1(\varepsilon,t)}{\partial t}\right\| dt
= d(c_1(\varepsilon,1),c_1(\varepsilon,0)) = d(\theta(\varepsilon),\eta(\varepsilon)).
\end{equation}
We begin with preliminary estimates on the Jacobi field of the variation $c_1$.
\begin{lemma}
  \label{jacobi-field-estimates}
  The Jacobi field $J(t)=\left.\ps\right|_{s=0}c_1(s,t)$ satisfies
  \begin{equation}
  \label{jacobi-field-estimate-1}
  \left\|\Dt J(t)\right\| = O(\delta \varepsilon)+O(\delta^2) \quad \forall t \in [0,1]
  \end{equation}
  and
  \begin{equation}
  \label{jacobi-field-estimate-2}
  \newparallel_t^{-1} J(t)= J(0) + O(\delta \varepsilon) + O(\delta^2) \quad \forall t \in [0,1].
  \end{equation}
\end{lemma}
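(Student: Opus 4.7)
The plan is to parallel-transport $J$ back to $T_{x_0}M$ along $\gamma$, which converts the Jacobi equation into a linear second-order ODE on a fixed vector space whose right-hand side is uniformly $O(\delta^2)$, and then to read off both estimates from the two boundary values of $J$.

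First I would identify the boundary values. Since $c_1(s,0) = \eta(s)$ and $c_1(s,1) = \theta(s)$, differentiating in $s$ at $0$ gives $J(0) = w$ and $J(1) = \newparallel_1 w'$. By the definition of $w'$ in \eqref{w-dash}, $w' - w = O(\delta\varepsilon)$, so $\tilde J(t) := \newparallel_t^{-1} J(t) \in T_{x_0}M$ satisfies
$$\tilde J(0) = w, \qquad \tilde J(1) = w', \qquad \tilde J(1) - \tilde J(0) = O(\delta\varepsilon).$$

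Next I would exploit $\|\dot\gamma(t)\| \equiv \delta$. Because parallel transport intertwines covariant differentiation, the Jacobi equation transcribes to $\tilde J''(t) = -\newparallel_t^{-1} R(J(t), \dot\gamma(t))\dot\gamma(t)$, whose right-hand side has norm at most $C\delta^2 \|J(t)\|$, with $C$ controlling $\|R\|$ on a fixed compact neighbourhood of $x_0$. Granted a uniform a priori bound $\|J\|_\infty = O(1)$, this gives $\|\tilde J''\|_\infty = O(\delta^2)$. Integrating twice yields $\tilde J(1) - \tilde J(0) = \tilde J'(0) + O(\delta^2)$, and matching against $\tilde J(1) - \tilde J(0) = O(\delta\varepsilon)$ forces $\tilde J'(0) = O(\delta\varepsilon) + O(\delta^2)$, hence $\tilde J'(t) = O(\delta\varepsilon) + O(\delta^2)$ uniformly in $t \in [0,1]$. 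Since parallel transport is an isometry, this is precisely \eqref{jacobi-field-estimate-1}. One more integration gives
$$\tilde J(t) = w + \int_0^t \tilde J'(u)\, du = J(0) + O(\delta\varepsilon) + O(\delta^2),$$
which is \eqref{jacobi-field-estimate-2}.

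The main (and only) subtlety will be the uniform a priori bound $\|J\|_\infty = O(1)$, without which the argument above is circular. I would handle this by recasting $\tilde J$ as the unique fixed point of the affine integral operator
$$f \mapsto (1-t)w + tw' + \int_0^1 G(t,u)\, \newparallel_u^{-1} R(\newparallel_u f(u), \dot\gamma(u))\dot\gamma(u)\, du,$$
where $G$ is the Green's function of $-d^2/dt^2$ on $[0,1]$ with homogeneous Dirichlet data. This operator is $O(\delta^2)$-contractive on $C([0,1], T_{x_0}M)$ once $\delta$ is small enough on a fixed compact neighbourhood of $x_0$, so its fixed point is uniformly $\|\cdot\|_\infty$-close to the affine segment $(1-t)w + tw'$; since $\|w\|, \|w'\| = O(1)$, the desired bound follows. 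All implicit constants thus become uniform under the ``sufficiently small'' proviso on $\delta, \varepsilon$ from the remark following \cref{thm:0}.
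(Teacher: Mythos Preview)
Your proof is correct and follows essentially the same line as the paper's: identify the boundary values $J(0)=w$, $J(1)=\newparallel_1 w'$ with $w'-w=O(\delta\varepsilon)$, use $\|\dot\gamma\|=\delta$ to get $\|\tilde J''\|\leq C\delta^2\|J\|$, secure an a priori bound $\|J\|_\infty=O(1)$, and then read off both estimates by integrating. The only notable difference is how the a priori bound is obtained: the paper bootstraps via a Gr\"onwall-type argument, whereas you invoke the Green's function for the Dirichlet problem and a contraction in $C([0,1],T_{x_0}M)$. Both give the same uniform $O(1)$ bound; your route is slightly more structured (it directly compares $\tilde J$ to the affine interpolant $(1-t)w+tw'$), while the paper's route is a touch more elementary and avoids introducing the Green kernel.
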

\begin{proof}
As $\gamma(t) = \exp_{x_0}(t\delta v)$ is a constant speed geodesic starting in the direction of the unit vector $v$, we have $\dot{\gamma}(t)=\delta \newparallel_t v$. Hence the Jacobi equation implies
\begin{equation}
  \label{3}
\left\| \DDt J(t)\right\|= \|R(J(t),\dot{\gamma}(t))\dot{\gamma}(t)\| \leq C \delta^2 \|J(t)\|,
\end{equation}
where $C = \sup_{t \in [0,1]} \|R_{\gamma(t)}(\cdot, \newparallel_t v)\newparallel_t v\| < \infty$ in the sense of operator norm. The expansion of $\newparallel_t^{-1} J(t)$ at $t=1$ in $T_{x_0}M$ with integral remainder is
$$
\newparallel_1^{-1}J(1) = J(0) + \left.\f{d}{dt}\right|_{t=0} \newparallel_t^{-1} J(t) + \int_0^1 (1-u) \f{d^2}{du^2} \newparallel_u^{-1} J(u)du.
$$
Applying the relation $\f{d^k}{dt^k} \newparallel_t^{-1} = \; \newparallel_t \f{D^k}{dt^k}$ for every $k\in \mathbb{N}$ and $\newparallel_0 \; = \textrm{Id}$, we obtain in terms of covariant derivatives:
\begin{equation}
\label{jacobi-field-expansion}
\newparallel_1^{-1} J(1) = J(0) + \left.\Dt\right|_{t=0} J(t) + \int_0^1 (1-u)\newparallel_u^{-1}\frac{D^2}{du^2} J(u)du.
\end{equation}
The Jacobi field satisfies the boundary conditions
\begin{equation}
\label{initial-final-jacobi}
J(0)= \f {d}{ds}\Big|_{s=0} \eta(s)=w , \qquad J(1)= \f
{d}{ds}\Big|_{s=0}\theta(s)= \; \newparallel_1 w',
\end{equation}
where $w'$ is given by \eqref{w-dash}. Together with
$$
\newparallel_1^{-1}J(1)-J(0) = w'-w = O(\delta\varepsilon),
$$
the expansion \eqref{jacobi-field-expansion} implies
$$
\left\|\left.\Dt\right|_{t=0} J(t)\right\| \leq O(\delta \varepsilon) + C\delta^2 \|J(t)\|.
$$
Therefore,
$$
\|J(t)\| \leq \|J(0)\| + \int_0^t\left\|\frac{D}{du} J(u)\right\|du \leq 1 + O(\delta \varepsilon) + C\delta^2 \int_0^t \|J(u)\|du,
$$
which yields by Gr\"onwall's lemma that
$$
\|J(t)\| \leq (1 + O(\varepsilon \delta))e^{C\delta^2 t} = 1+O(\delta^2)+ O(\delta \varepsilon).
$$
As a consequence, we deduce from (\ref{3}) that
$
\DDt J(t) = O(\delta^2)
$,
and thus
$$
\newparallel_t^{-1} \Dt J(t) = \left.\Dt\right|_{t=0} J(t) + \int_0^t \newparallel_u^{-1}\frac{D^2}{du^2} J(u)du = O(\delta \varepsilon) + O(\delta^2).
$$
Finally,
$$
\newparallel_t^{-1} J(t) = J(0) + \int_0^t \newparallel_u^{-1} \f{D}{du} J(u)du,
$$
where the second term has norm of the required order.
\end{proof}

The following fact about order of magnitude can be extended to arbitrary number of variables, but the two-variable version will suffice for our purpose.
\begin{lemma}
\label{smooth-joint-order}
    Let $\phi: \R^2 \rightarrow \R$ be a smooth function. For any $a,b \in \mathbb{N}$, if $\phi(x,y)=O(\min(x^a, y^b))$ then $\phi(x,y)=O(x^a y^b)$ in a fixed neighbourhood of the origin.

\end{lemma}
\begin{proof}
The multivariate Taylor's theorem states that
\begin{equation}
\label{multivariate-taylor}
\phi(x,y) = \sum_{i+j\leq a+b-1} \partial_x^i \partial_y^j \phi(0,0)x^i y^j + \sum_{i+j = a + b} r_{ij}(x,y)x^i y^j
\end{equation}
for smooth remainders $r_{ij}: \R^2 \rightarrow \R$.
Then the assumption on $\phi$ implies in particular
$$
\phi(x,y) = O(x^a) \quad \forall y \in \R,
$$
which is equivalently stated as $\limsup_{x\rightarrow 0} \left| \f{\phi(x,y)}{x^a} \right| < \infty \quad \forall y \in \R$.
We claim that this implies
$$
\partial_x^i \partial_y^j \phi(0,0) = 0, \quad r_{ij}(x,y)=O(x^{a-i}) \quad \forall i \leq a-1, j \in \mathbb{N}, y \in \R.
$$
Indeed, if $\partial_x^i \partial_y^j \phi(0,0) \neq 0$  for some $i \leq a-1$, then 
\begin{equation}
\label{contradiction:1}
\infty = \lim_{x\rightarrow 0} \left| \f{\partial_x^i \partial_y^j \phi(0,0)x^i y^j}{x^a} \right| \leq \limsup_{x\rightarrow 0} \left| \f{\phi(x,y)}{x^a} \right|  \quad \forall y \neq 0.
\end{equation}
Similarly, if a remainder $r_{ij}$ for some $j \leq b-1$ satisfies $\limsup_{x \rightarrow 0} \left| \f{r_{ij}(x,y)}{x^{a-i}} \right| = \infty$ then
\begin{equation}
\label{contradiction:2}
\infty = \lim_{x\rightarrow 0} \left|\f{r_{ij}(x,y)x^iy^j}{x^a}\right| \leq \limsup_{x\rightarrow 0} \left| \f{\phi(x,y)}{x^a} \right| \quad \forall y \neq 0.
\end{equation}
Either of \eqref{contradiction:1} and \eqref{contradiction:2} contradicts that $\phi(x,y) = O(x^a)$ for all $y \in \R$.

In the same way, it holds that $\phi(x,y) = O(y^b)$ for all $x \in \R$, which implies
$$
\partial_x^i \partial_y^j \phi(0,0) = 0, \quad r_{ij}(x,y)=O(y^{b-j}) \quad \forall i \in \mathbb{N}, j \leq b-1, x \in \R.
$$
Hence the first sum in \eqref{multivariate-taylor} vanishes.
By smoothness, also $r_{ab}(x,y)=O(1)$ in a fixed neighbourhood of the origin.
As a consequence, we can write
$$
\begin{aligned}
\phi(x,y) &= \sum_{i+j = a+b} r_{ij}(x,y)x^iy^j \\
&= \sum_{i=0}^{a-1} r_{i,a+b-i}(x,y) x^i y^{a+b-i} + r_{ab}(x,y)x^ay^b + \sum_{j=0}^{b-1} r_{a+b-j,j}(x,y) x^{a+b-j} y^{j}\\
&= \sum_{i=0}^{a-1} O(x^{a-i}) x^iy^{a+b-i} + O(x^ay^b) +\sum_{j=0}^{b-1} O(x^{a+b-j}) y^{a+b-j} y^{j}\\
&= O(x^ay^{b+1}) + O(x^a y^b) + O(x^{a+1}y^b) = O(x^a y^b),
\end{aligned}
$$
as required.
\end{proof}

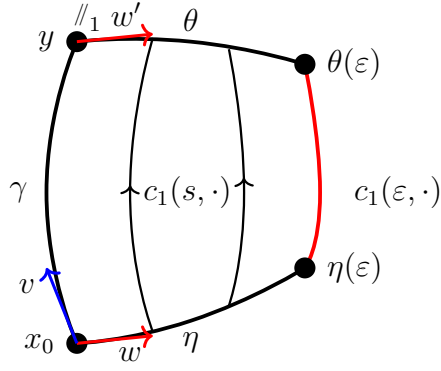
\begin{figure}
\centering
\begin{tikzpicture}
\draw [line width=0.5mm] plot [smooth, tension=1] coordinates { (0,0) (-0.4,2) (0,4) };
\node at (-0.5,2) [left] {$\gamma$};
\draw [line width=0.5mm] plot [smooth, tension=1] coordinates { (0,0) (1.5, 0.3) (3,1) };
\node at (1.5, 0.3) [below] {$\eta$};
\node[circle, fill=black, scale=0.7, label=left:{$x_0$}] at (0,0) {};
\draw [line width=0.5mm, red] plot [smooth, tension=1] coordinates { (3,1) (3.2,2) (3,3.7) };
\node at (3.5,2) [right] {$c_1(\varepsilon,\cdot)$};
\draw [line width=0.5mm] plot [smooth, tension=1] coordinates { (0,4) (1.5,4) (3,3.7) };
\node at (1.5,4) [above] {$\theta$};
\node[circle, fill=black, scale=0.7, label=right:{$\theta(\varepsilon)$}] at (3,3.7) {};
\node[circle, fill=black, scale=0.7, label=right:{$\eta(\varepsilon)$}] at (3,1) {};
\node[circle, fill=black, scale=0.7, label=left:{$y$}] at (0,4) {};

\begin{scope}[very thick,decoration={
    markings,
    mark=at position 0.5 with {\arrow{>}}}
    ] 
\draw [line width=0.3mm, postaction={decorate}] plot [smooth, blue, tension=1] coordinates {
(1,0.15) (0.7,2) (1,4.05) };
\draw [line width=0.3mm, postaction={decorate}] plot [smooth, blue, tension=1] coordinates {
(2,0.5) (2.2,2) (2, 3.9) };
\node at (2.2,2) [left] {$c_1(s,\cdot)$};
\end{scope}

\draw [line width=0.4mm,->, color=red] (0,0) -- (1,0.1);
\node at (1,0.1) [below left] {$w$};
\draw [line width=0.4mm,->,color=blue] (0,0) -- (-0.4,1);
\node at (-0.4,1) [below left] {$v$};
\draw [line width=0.4mm,->, color=red] (0,4) -- (1,4.1);
\node at (1,4) [above left] {$\newparallel_1 w'$};

\end{tikzpicture}
\caption{Geodesic variation $c_1$ (with positive sectional curvature in the $v,w$-plane)}
\label{fig:c1}
\end{figure}

For linearly independent $v,w \in T_{x_0}$, denote by 
$$
K_{x_0}(v,w) := \frac{\<R(v,w)w,v\>}{\|w\|^2\|v\|^2 - \<v,w\>^2}
$$
the sectional curvature at $x_0$ of the tangent plane spanned by $v$ and $w$.

\cref{variation-1} below is similar to the classical distance expansion by means of a geodesic triangle, which was the original characterization of sectional curvature by Riemann, see e. g. \cite{Meyer2004ToponogovsTA} for the proof.
\begin{lemma}
\label{triangle-distance}
For any $w_1,w_2 \in T_{x_0}M$ linearly independent and $\varepsilon >0$ sufficiently small, 
$$
d(\exp_{x_0}(\varepsilon w_1), \exp_{x_0}(\varepsilon w_2)) = \varepsilon \|w_1-w_2\|-\frac{1}{6}\f{\<R(w_1,w_2)w_2,w_1 \>}{\|w_1-w_2\|}\varepsilon^3 + O(\varepsilon^4).
$$
\end{lemma}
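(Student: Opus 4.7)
The plan is to construct an explicit smooth curve $\alpha$ joining $p := \exp_{x_0}(\varepsilon w_1)$ to $q := \exp_{x_0}(\varepsilon w_2)$, compute its length via a Jacobi field expansion along a family of radial geodesics at $x_0$, and then argue that this length agrees with $d(p,q)$ up to $O(\varepsilon^4)$.

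Concretely, define the variation
\[
c(s,t) := \exp_{x_0}(t\, w_s), \qquad w_s := (1-s) w_1 + s w_2, \quad (s,t) \in [0,1]\times[0,\varepsilon],
\]
so that for each $s$ the map $t\mapsto c(s,t)$ is a radial geodesic from $x_0$ with initial velocity $w_s$, and the connecting curve $\alpha(s) := c(s,\varepsilon)$ joins $p$ to $q$. Its velocity $\alpha'(s)$ equals $J_s(\varepsilon)$, where $J_s$ is the Jacobi field along $t\mapsto c(s,t)$ satisfying $J_s(0)=0$ and $\frac{D}{dt}|_{t=0}J_s = W := w_2-w_1$. The standard third-order Taylor expansion of a Jacobi field vanishing at the origin, read in a parallel frame along the geodesic, yields
\[
\newparallel_\varepsilon^{-1} J_s(\varepsilon) \;=\; \varepsilon W \;-\; \tfrac{\varepsilon^3}{6}\, R_{x_0}(W, w_s)\, w_s \;+\; O(\varepsilon^4).
\]
Squaring norms and exploiting $R(W,W)=0$ together with the pair-symmetry $\langle R(X,Y)Z,U\rangle=\langle R(Z,U)X,Y\rangle$, one checks that
\[
\langle R(W, w_s)\, w_s, W\rangle \;=\; \langle R(w_1, w_2)\, w_2, w_1\rangle
\]
independently of $s$. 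Taking square roots and integrating over $s\in[0,1]$ then gives
\[
L(\alpha) \;=\; \varepsilon\|W\| \;-\; \tfrac{\varepsilon^3}{6\|W\|}\,\langle R(w_1,w_2)\, w_2, w_1\rangle \;+\; O(\varepsilon^4),
\]
which together with the trivial inequality $d(p,q)\leq L(\alpha)$ furnishes the \emph{upper} bound.

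The main obstacle is the matching \emph{lower} bound, since a priori the minimizing geodesic from $p$ to $q$ could be strictly shorter than $L(\alpha)$. To handle this, I would pass to geodesic normal coordinates at $x_0$, in which $p$ and $q$ correspond to $\varepsilon w_1$ and $\varepsilon w_2$, and $\alpha$ is precisely the Euclidean segment between them. The metric has the classical expansion
\[
g_{ij}(x) \;=\; \delta_{ij} \;-\; \tfrac{1}{3}\, R_{ikjl}(x_0)\, x^k x^l \;+\; O(|x|^3).
\]
Solving the geodesic equation perturbatively (the Christoffel symbols are $O(\varepsilon)$ on a ball of radius $\varepsilon$), one checks that the unique minimizing geodesic $\beta$ between $p$ and $q$ differs from the Euclidean segment by $O(\varepsilon^3)$ in $C^0$. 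Since the first variation of length vanishes at the minimizer $\beta$, the second-variation estimate gives $L(\alpha)-L(\beta)=O(\varepsilon^4)$, matching the upper bound to the required precision and completing the proof.
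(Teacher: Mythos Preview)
The paper does not give its own proof of this lemma: it is simply quoted with references to \cite{Meyer2004ToponogovsTA} and \cite{MR2605867}. Your argument is therefore not a ``different route'' from the paper so much as a replacement for a citation, and it is essentially correct. The upper-bound computation via the radial-geodesic variation and the third-order Jacobi expansion is standard and accurate; the observation that $\langle R(W,w_s)w_s,W\rangle$ is independent of $s$ by the curvature symmetries is exactly the right simplification.

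One point in the lower-bound step deserves tightening. You write that the minimizing geodesic $\beta$ differs from the Euclidean segment $\alpha$ by $O(\varepsilon^3)$ \emph{in $C^0$}, and then invoke the vanishing first variation plus a second-variation bound to conclude $L(\alpha)-L(\beta)=O(\varepsilon^4)$. But the second variation of length is controlled by $\int |\dot h^\perp|^2/|\dot\beta|$ and curvature terms, so it is the $C^1$-size of $h:=\alpha-\beta$ that matters, not the $C^0$-size. Fortunately your perturbative geodesic argument actually gives this: from $\ddot h=-\Gamma(\beta)\dot\beta\dot\beta=O(\varepsilon^3)$ and the boundary conditions $h(0)=h(1)=0$ one obtains $\dot h=O(\varepsilon^3)$ as well, hence the second-variation term is $O(\varepsilon^6/\varepsilon)=O(\varepsilon^5)$, comfortably within the required $O(\varepsilon^4)$. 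Just state the $C^1$ bound explicitly so the final estimate is not left hanging on a $C^0$ statement that by itself would be insufficient.
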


The following contrasts with the triangle estimate of \cref{triangle-distance} in that the two geodesics start from two distinct points and have a carefully chosen relationship between their initial directions $w$ and $\newparallel_1 w'$, prescribed by \eqref{w-dash}.

\begin{proposition}
\label{variation-1}
For any $v,w \in T_{x_0}M$ with $\|v\|=1, \|w\| \leq 1$ and $\delta, \varepsilon$ sufficiently small, we have the estimate for the distance between $\exp_{x_0}(\varepsilon w)$ and $\exp_{\exp_{x_0}(\delta v)}(\varepsilon \newparallel_1 w')$ where $w'$ is given by (\ref{w-dash}), expressed by the geodesic length
\begin{equation}
\label{c2-length-expansion}
\begin{aligned}
L_1(\varepsilon,\delta)
&=\delta \left( 1- \frac{\varepsilon^2}{2} \left[K_{x_0}(v,w)(\|w\|^2-\<v, w\>^2)+
\Hess_{x_0} V(v,v) (1 - \|w\|^2)\right]\right)\\
&\qquad +O(\delta^2 \varepsilon^2 ) + O(\delta \varepsilon^3),
\end{aligned}
\end{equation}
where the $O(\varepsilon^2\delta^2)+O(\delta \varepsilon^3)$ terms are uniformly bounded in $v$ and $w$.
\end{proposition}

\begin{proof}
We expand the length $L_1(\varepsilon, \delta)$  in the first variable,
\begin{equation}
\label{L1-expansion}
L_1(\varepsilon,\delta) = L_1(0,\delta) + \varepsilon \left.\ps\right|_{s=0}
L_1(s,\delta)+ \f{\varepsilon^2}{2} \left.\pss \right|_{s=0} L_1(s,\delta) +
O(\varepsilon^3)
\end{equation}
and compute the first and second order coefficients using the variation $c_1(s, \cdot)$.

Recalling the boundary conditions \eqref{initial-final-jacobi} for the Jacobi field $J(t) = \left. \ps \right|_{s=0} c_1(s,t)$, the first order coefficient in the expansion (\ref{L1-expansion}) is
\begin{equation}
\label{variation-1-first-derivative}
\begin{aligned}
\left.\ps\right|_{s=0} L_1(s,\delta) &= \f{1}{\delta} (\< J(1), \dot{\gamma}(1) \> - \<
J(0), \dot{\gamma}(0)
\>)\\
&= \<\newparallel_1 w', \newparallel_1 v\> - \< w, v \>\\
&= \< \newparallel_1 w - \frac{\varepsilon}{2}(1-\|w\|^2) ( \nabla V(y)- \newparallel_1 \; \nabla V(x_0)), \newparallel_1 v \> - \<w,v\> \\
&= - \frac{\varepsilon}{2}(1-\|w\|^2) \< \newparallel_1^{-1} \; \nabla V(y) - \nabla V(x_0), v \>\\
&= -\frac{\delta \varepsilon}{2}(1-\|w\|^2) \Hess_{x_0}V(v,v) + O(\delta^2 \varepsilon).
\end{aligned}
\end{equation}
Here we used the formula (\ref{eq:variation-order-1}) on the first line, inserted \eqref{initial-final-jacobi} on the second line, plugged in \eqref{w-dash} for $w'$ on the third line, applied the isometry $\newparallel_1^{-1}$ on the fourth line, and expanded 
$$
\newparallel_1^{-1} \; \nabla V(y) - \nabla V(x_0) = \delta \Hess_{x_0}V(v,v) + O(\delta^2)
$$
on the last line. 

The second order variation of length formula (\ref{eq:variation-order-2}) reduces to
$$
\left.\pss\right|_{s=0} L_1(s,\delta) = \f{1}{\delta}  \int_0^1 \left( \la \Dt J(t)^\perp,\Dt
J(t)^\perp \ra - \< R(J(t)^\perp,\dot{\gamma}(t))\dot{\gamma}(t), J(t)^\perp \> \right) dt,
$$
since $\Dps \ps c(s,0) = \Dps \ps c(s,1) = 0$ as $s \mapsto c(s,0), s \mapsto c(s,1)$ are geodesics. By the estimate (\ref{jacobi-field-estimate-1}), the first term is
\begin{equation}
\label{second-order-term-1}
\int_0^1 \la \Dt J(t)^\perp, \Dt J(t)^\perp \ra dt = O(\delta^2 \varepsilon^2) +
O(\delta^3 \varepsilon) + O(\delta^4).
\end{equation}
Moreover, by smoothness of $R$ in the base point, we have $\|\newparallel_t^{-1}\circ R \circ \newparallel_t - R\| = O(\delta)$ as the parallel transport is along a geodesic of length $\delta$. Then by multi-linearity of $R$ and the estimate (\ref{jacobi-field-estimate-2}):
\begin{equation}
\label{second-order-term-2}
\begin{aligned}
\int_0^1 \< R(J(t)^\perp,\dot{\gamma}(t))\dot{\gamma}(t), J(t)^\perp \> dt &= \delta^2 \int_0^1 \<R(\newparallel_t^{-1} J(t)^\perp, v) v, \newparallel_t^{-1}J^\perp(t)\>dt+O(\delta^3)\\
&= \delta^2 \<R(w,v)v,w\> +O(\delta^3),
\end{aligned}
\end{equation}
noting that $ \<R(w^\perp,v)v,w^\perp\> = \<R(w,v)v,w\>$ by anti-symmetry of the curvature tensor and since $w^\perp-w$ is parallel to $v$ by definition.

Therefore, coming back to the second order coefficient and plugging in \eqref{second-order-term-1} and \eqref{second-order-term-2},
\begin{equation}
\label{variation-1-second-derivative}
\left.\pss\right|_{s=0} L_1(s,\delta) =
-\delta \<R(w,v)v,w\> + O (\delta^2)
= -\delta K(v,w)(\|w\|^2-\<v,w\>^2) + O(\delta^2).
\end{equation}
Moreover, the $O(\varepsilon^3)$ term in (\ref{L1-expansion}) is in fact $O(\delta \varepsilon^3)$ by \cref{smooth-joint-order} since $ L_1(\varepsilon, \delta) = O(\delta)$ and $L_1$ is smooth. Finally, we obtain the expansion by plugging \eqref{variation-1-first-derivative} and \eqref{variation-1-second-derivative} back into \eqref{L1-expansion}. We emphasize that the Hessian manifests itself through \eqref{variation-1-first-derivative}. The vector $w'$ was carefully chosen so that the Hessian appeared at the correct order together with the Ricci curvature.
\end{proof}

\subsection{Projection distance estimate}
We construct the variation $c_2$ for the purpose of estimating the projection distance to a specific submanifold. This projection distance will serve as a suitable 1-Lipschitz function for establishing a lower bound on the Wasserstein distance in \cref{thm:0} by the Kantorovich-Rubinstein duality. 

Let $E$ be a smooth embedded submanifold of the Riemannian manifold $M$.
\begin{definition}
    An open neighbourhood $U$ of $E$ in $M$ is said to be a tubular neighbourhood if there exists an open subset of the normal bundle $W \subset TE^\perp$ such that  $\exp : W \rightarrow U$ is a diffeomorphism and $E\subset \exp W$. 
\end{definition}

The following is a regularity result for the projection map and projection distances, see one of \cite{MR0473443,MR0614221,MR749908} for a proof. 
\begin{lemma}
\label{signed-distance}
For every compact, embedded smooth submanifold $E$ of $M$, there is a tubular neighbourhood $U$ of $E$ in $M$ such that the shortest distance projection map
\begin{equation}
\label{projection}
p(x) := \mathrm{argmin}_{z \in E} d(x,z)
\end{equation}
is well-defined and smooth on $U$, and the distance to projection $z \mapsto d(z,p(z))$ is smooth on $U \setminus E$.
Moreover, if $E$ is a codimension 1 submanifold with $\nu \in \Gamma(TE^\perp)$ a unit vector field normal to the submanifold then the signed distance to projection defined by
\begin{equation}
\label{signed-projection-distance-def}
f(z) := \sgn (\langle \exp_{p(z)}^{-1} (z), \nu (z) \rangle) d(z,p(z))
\end{equation}
is smooth on all of $U$.
\end{lemma}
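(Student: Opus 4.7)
The plan is to reduce everything to the classical tubular neighbourhood theorem applied to the normal exponential map $\exp^\perp: TE^\perp \to M$, $(x,v) \mapsto \exp_x(v)$. Its differential at the zero section is an isomorphism $T_xE \oplus T_xE^\perp \cong T_xM$, so by compactness of $E$ there exists $\varepsilon_0 > 0$ such that $\exp^\perp$ restricts to a diffeomorphism on $W := \{(x,v) \in TE^\perp : \|v\| < \varepsilon_0\}$ onto an open neighbourhood $U$ of $E$ in $M$. This $U$ will be the tubular neighbourhood in the statement.

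First I would identify the shortest distance projection with the inverse of the normal exponential map. Given $z \in U$, a minimizer $x^* \in E$ of $d(z,\cdot)$ exists by compactness, and the first-order optimality condition forces the minimizing geodesic from $x^*$ to $z$ to leave $E$ perpendicularly. Hence $z = \exp^\perp_{x^*}(w)$ for some $w \in T_{x^*}E^\perp$ with $\|w\| = d(z, x^*) \leq d(z,x) < \varepsilon_0$, where $z = \exp^\perp_x(v)$ is the unique representation coming from $W$. Injectivity of $\exp^\perp|_W$ then yields $x^* = x$, so $p$ is single-valued and agrees with $\pi_E \circ (\exp^\perp|_W)^{-1}$, where $\pi_E: TE^\perp \to E$ is the bundle projection. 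This composition is smooth on all of $U$.

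For the distance to projection I would write $d(z, p(z)) = \|(\exp^\perp|_W)^{-1}(z)\|$ using the Riemannian fibre metric on $TE^\perp$, which is smooth everywhere except at the zero section where $\|\cdot\|$ fails to be smooth; this gives smoothness precisely on $U \setminus E$. In the codimension one case, the unit normal field $\nu$ globally trivialises $TE^\perp \cong E \times \R$ via $(x, t\nu(x)) \mapsto (x,t)$, so that $(\exp^\perp|_W)^{-1}$ factors as a smooth map $U \to E \times (-\varepsilon_0, \varepsilon_0)$. The second coordinate $t(z)$ has absolute value $d(z, p(z))$ and sign equal to $\sgn\langle \exp_{p(z)}^{-1}(z), \nu(p(z))\rangle$, so $f(z) = t(z)$, a smooth coordinate function, extends smoothly across the zero section $E$ where it vanishes transversally.

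I expect the main subtle step to be the nearest-point identification, namely ruling out the possibility that a distinct minimizer $x^* \neq x$ lies elsewhere on the compact submanifold $E$. This is exactly where compactness of $E$ is used to extract a uniform $\varepsilon_0$ so that both $(x,v)$ and $(x^*,w)$ simultaneously fall within the same injectivity domain $W$ of $\exp^\perp$; local injectivity at each point of $E$ alone would not suffice.
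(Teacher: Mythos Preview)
The paper does not prove this lemma at all; it simply attributes the result to Foote \cite{MR749908} and states it. Your argument via the normal exponential map is the standard proof and is correct: identifying $p$ with $\pi_E\circ(\exp^\perp|_W)^{-1}$, reading off $d(z,p(z))$ as the fibre norm, and in codimension one using the trivialisation by $\nu$ to recognise $f$ as the smooth second coordinate of $(\exp^\perp|_W)^{-1}$. The only point worth tightening is the nearest-point step: to conclude $(x^*,w)\in W$ you implicitly use that $\varepsilon_0$ is also below the injectivity radius of $M$ along the compact set $E$, so that the minimising geodesic from $x^*$ to $z$ is unique and $\exp_{x^*}^{-1}(z)$ is well defined; this is harmless but should be stated. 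Also note the paper writes $\nu(z)$ in the definition of $f$, which only makes sense as $\nu(p(z))$; your version is the correct reading.
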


Recall $v \in T_{x_0}M$ is the fixed unit vector, representing the direction of transport of the initial test measure $\mu^\varepsilon_{x_0}$. We now consider the concrete submanifold of codimension 1,
\begin{equation}
\label{submanifold}
E := \exp_{x_0} \{v^\perp\} := \{\exp_{x_0}(w) : w \in T_{x_0}M, \langle w, v \rangle = 0\}
\subset M.
\end{equation}
 We recall that the second fundamental form for $E$ is the 2-covariant tensor field, defined at $x_0$ for two vectors $w_1,w_2 \in T_{x_0}E$ as
\begin{equation}
\label{sff-definition}
\II_{x_0}(w_1,w_2) := \< \nabla^M_{w_1} \nu(x_0), w_2 \> = - \< \nu, \nabla^M_{w_1} W (x_0)\>,
\end{equation}
where $W$ on the right is an arbitrary smooth local tangent vector field on $E$ with $W(x_0)=w_2$ and $\nu$ a smooth local normal vector field on $E$ with $\nu(x_0)=v$, see \cite[Chap. 5]{MR3726907}. Here $\nabla^M$ refers to the Levi-Civita connection on $M$.

\begin{remark}
\label{vanishing-sff}
We shall make use of the fact that $\II$ vanishes at $x_0 \in M$ for this submanifold, which we prove for the readers' convenience. Using the normal coordinates $(x^1,\ldots,x^{n})$ of $M$ at $x_0 \in M$, we may assume $(x^1,\ldots,x^{n-1})$ are the normal coordinates of $E$ at $x_0$. Then $\Gamma^k_{ij}(x_0) = 0$ (see \cite[Chap. 1.4]{MR3726907}) and for arbitrary vector fields $X=\sum_{i=1}^{n-1} X^i \f{\p}{\p x^i},Y=\sum_{i=1}^{n-1} Y^i \f{\p}{\p x^i} \in \Gamma(TE)$, it holds that
$$
\begin{aligned}
\nabla_X Y(x_0) &= \sum_{i,j=1}^{n-1} X^i(x_0) \Gamma^k_{ij}(x_0) Y^j(x_0)
\frac{\partial}{\partial x^k}(x_0)
+ X^i(x_0) \frac{\partial Y^j}{\partial x^j}(x_0) \frac{\partial}{\partial x^j}(x_0) \\
&= \sum_{i,j=1}^{n-1} X^i(x_0) \frac{\partial Y^j}{\partial x^j}(x_0)
\frac{\partial}{\partial x^j}(x_0) \in T_{x_0}E.
\end{aligned}
$$
We conclude $\II(X,Y)(x_0) = -\langle \nu(x_0), \nabla_X Y(x_0) \rangle=0$ since $\nu$ is normal to $E$.
\end{remark}

\begin{figure}
\centering
\includegraphics[scale=0.4]{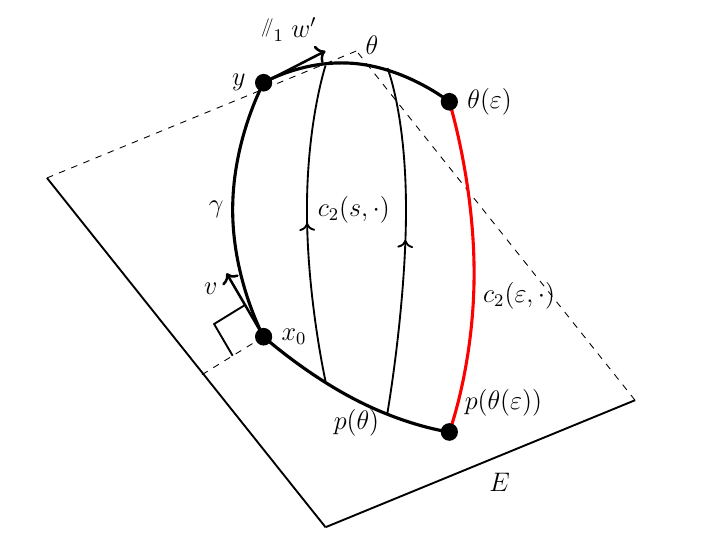}
\caption{Geodesic variation $c_2$}
\label{fig:c2}
\end{figure}

We now define the geodesic variation $c_2$, with its depiction in \cref{fig:c2}. As before, denote for a vector $w \in T_{x_0}M$ with $\|w\| \leq 1$ the geodesic
$$
\theta(s) := \exp_{y} (s \newparallel_1 w'),
$$
where $w'$ is defined by (\ref{w-dash}). Let $c_2: [0, \varepsilon] \times [0,1] \rightarrow M$ be the variation of the geodesic
$\gamma(t) = \exp_{x_0} (t\delta v)$ defined by 
$$
c_2(s,t) = \exp_{p(\theta(s))} (t \exp^{-1}_{p(\theta(s))}(\theta(s))),
$$
with $p$ the projection map given by (\ref{projection}). Since $\theta$, $p$, $\exp_{x_0}$ and $\exp_{x_0}^{-1}$ are all smooth (in particular $p$ by \cref{signed-distance}), $c_2$ is also smooth. Note also the boundary value $c_2(s,0) = \exp_{p(\theta)}(\mathbf{0}) = p(\theta)$.
Denote the length
$$
L_2(\varepsilon, \delta) := \int_0^1 \left\|\frac{\partial
c_2(\varepsilon,t)}{\partial t}\right\| dt
= d(c_2(\varepsilon, 1), c_2(\varepsilon, 0))
= d(\theta(\varepsilon),p(\theta(\varepsilon)))
$$
and the signed length
\begin{equation}
\label{L2-tilde}
\tilde{L}_2(\varepsilon, \delta) := \sgn(\<\exp_{x_0}^{-1}(c_2(\varepsilon,\delta)),v\>) L_2(\varepsilon, \delta),
\end{equation}
which is smooth by \cref{signed-distance}.

\begin{proposition}
\label{prop:variation-2}
For any vectors $v,w \in T_{x_0}M$ with $\|v\|=1, \|w\| \leq 1$ and $\delta, \varepsilon$ sufficiently small, the signed distance between $\exp_{\exp_{x_0}(\delta v)}(\varepsilon \newparallel_1 w')$ and its projection to $E$ has expansion 
\begin{equation}
\begin{aligned}
\tilde{L}_2(\varepsilon, \delta)
&= \delta + \varepsilon \<v, w\> - \frac{\varepsilon^2 \delta}{2}
\left[K_{x_0}(v,w)(\|w\|^2 - \<v,w\>^2)+
\Hess_{x_0} V(v,v) (1 - \|w\|^2)\right]\\
& \qquad + O(\varepsilon^2 \delta^2) + O(\varepsilon^3),
\end{aligned}
\end{equation}
where the $O(\varepsilon^2\delta^2)+O( \varepsilon^3)$ terms are uniformly bounded in $v$ and $w$.
\end{proposition}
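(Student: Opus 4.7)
The plan is to parallel the proof of \cref{variation-1}, applying the first- and second-variation of length formulas to $c_2$ while carefully handling the fact that its left boundary $s\mapsto p(\theta(s))$ is a curve in $E$ rather than a geodesic in $M$. Since $\tilde{L}_2(\cdot,\delta)$ is smooth by \cref{signed-distance} and agrees with $L_2(\cdot,\delta)$ in a neighbourhood of $s=0$ (as $\theta(s)$ stays on the positive side of $E$), the Taylor expansion
\[
\tilde{L}_2(\varepsilon,\delta) = \delta + \varepsilon \left.\ps\right|_{s=0}\tilde{L}_2(s,\delta) + \frac{\varepsilon^2}{2}\left.\pss\right|_{s=0}\tilde{L}_2(s,\delta) + O(\varepsilon^3)
\]
reduces the task to computing the first two $s$-derivatives at $0$.

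For the first derivative, I apply \eqref{eq:variation-order-1} to the Jacobi field $J(t) = \left.\ps\right|_{s=0} c_2(s,t)$, whose boundary values are $J(0) = \left.\frac{d}{ds}\right|_{s=0}p(\theta(s))\in T_{x_0}E = v^\perp$ and $J(1) = \newparallel_1 w'$. Since $\langle J(0), v\rangle = 0$, only the $t=1$ endpoint contributes, yielding $\langle v, w'\rangle$; expanding $w'$ together with $\newparallel_1^{-1}\nabla V(y) - \nabla V(x_0) = \delta\,\Hess_{x_0}V^\#(v) + O(\delta^2)$ produces the first-order contribution $\varepsilon\langle v, w\rangle - \frac{\varepsilon^2\delta}{2}(1-\|w\|^2)\Hess_{x_0}V(v,v) + O(\varepsilon^2\delta^2)$ to the Taylor expansion.

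For the second derivative, I apply \eqref{eq:variation-order-2}. Both boundary contributions $\langle \Dps \left.\ps\right|_{s=0} c_2(s,t), \dot\gamma(t)\rangle$ at $t=0$ and $t=1$ vanish: at $t=1$, $s\mapsto \theta(s)$ is a geodesic; at $t=0$, the ambient acceleration of the curve $s\mapsto p(\theta(s))\subset E$ has normal-to-$E$ component equal to $\Pi_{x_0}(J(0), J(0))$, which vanishes since $\Pi(x_0) = 0$. The surviving integral is handled by Jacobi-field estimates analogous to \cref{jacobi-field-estimates}: a Gr\"onwall argument gives $\sup_t\|J(t)\| = O(1)$ and $\|\DDt J(t)\| = O(\delta^2)$, while interpreting $J$ as arising from the normal-geodesic variation of $E$ (whose initial covariant derivative is prescribed by the shape operator, also vanishing at $x_0$) yields the refined estimate $J(0) = w - \langle v, w\rangle v + O(\delta^2 + \delta\varepsilon)$. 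Consequently $\Dt J^\perp(t) = O(\delta^2 + \delta\varepsilon)$ and $\newparallel_t^{-1}J^\perp(t) = w - \langle v,w\rangle v + O(\delta + \delta\varepsilon)$, so the $\|\Dt J^\perp\|^2$ part of the integral is absorbed into the error while the curvature part, using smoothness of $R$ and multilinearity, contributes $-\delta\,K_{x_0}(v,w)(\|w\|^2 - \langle v,w\rangle^2) + O(\delta^2)$ to $\left.\pss\right|_{s=0} L_2(s,\delta)$.

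The main technical obstacle is establishing the refined estimate $J(0) = w - \langle v, w\rangle v + O(\delta^2 + \delta\varepsilon)$: the improvement beyond the naive $O(\delta)$ bound is essential for the $\|\Dt J^\perp\|^2$ integral to fall within the claimed error, and it crucially uses the vanishing of $\Pi$ at $x_0$. Assembling the two derivative computations into the Taylor expansion then produces the announced identity with total error $O(\varepsilon^2\delta^2) + O(\varepsilon^3)$.
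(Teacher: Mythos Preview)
Your proof is correct and rests on the same geometric input as the paper's: the Taylor expansion of $\tilde L_2$ in $\varepsilon$, the first-variation computation via $J(1)=\newparallel_1 w'$ and $J(0)\in T_{x_0}E$, and the vanishing of the second fundamental form of $E$ at $x_0$ to control the $t=0$ boundary behaviour. The difference is purely organizational. You work with the \emph{integral} form of the second-variation formula and therefore need the uniform bound $\Dt J^\perp(t)=O(\delta^2+\delta\varepsilon)$, which you obtain by first establishing the refined estimate $J(0)=w-\langle v,w\rangle v+O(\delta^2+\delta\varepsilon)$ via the shape-operator interpretation. The paper instead uses the \emph{endpoint} form $\frac{1}{\delta}\bigl[\langle \Dt J^\perp, J^\perp\rangle\bigr]_0^1$ and observes directly that $\langle \Dt|_{t=0}J^\perp,u\rangle=\delta\,\Pi_{x_0}(J(0),u)=0$ for every $u\in T_{x_0}E$, so the $t=0$ term drops out exactly; it then evaluates only $\langle \Dt|_{t=1}J^\perp, J^\perp(1)\rangle$ using $J^\perp(1)=(\newparallel_1 w')^\perp$ and the Jacobi equation, never needing to identify $J(0)$.

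The upshot is that what you flag as ``the main technical obstacle'' --- the refined $J(0)$ estimate --- is circumvented entirely in the paper's route: the vanishing $\Pi(x_0)=0$ already gives $\Dt|_{t=0}J^\perp=0$ on the nose (it is perpendicular to $v$ by construction and perpendicular to $v^\perp$ by the second-fundamental-form identity), so the endpoint form yields the curvature term with no Gr\"onwall step. Your approach has the virtue of mirroring the proof of \cref{variation-1} verbatim, at the cost of an extra layer of Jacobi-field estimates; the paper's is shorter but uses a different form of \eqref{eq:variation-order-2}.
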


\begin{proof}
Since $\tilde{L}_2(s,\delta)=L_2(s,\delta)$ for small $s$, their derivatives at $s=0$ also agree, and we may apply again the method of geodesic variations to compute $\left.\ps\right|_{s=0} L_2(s, \delta)$ and $\left.\pss\right|_{s=0} L_2(s,\delta)$ in order to obtain the coefficients of the expansion of $\tilde{L}_2(\varepsilon,\delta)$.

The Jacobi field $J(t)=\left.\ps\right|_{s=0} c_2(s,t)$ satisfies boundary condition $J(1) = \; \newparallel_1 w'$. Moreover, $J(0) \in T_{x_0}E$ because $c_2(s,0) \in E$ for all $s \in [0,\varepsilon]$, hence in particular $\dot{\gamma}(0) \perp J(0)$ and $J^\perp(0) = J(0)$. By the variation of length formula (\ref{eq:variation-order-1}) and plugging in \eqref{w-dash} for $w'$, the order $\varepsilon$ coefficient is thus
\begin{align}
\left.\ps\right|_{s=0} L_2(s, \delta) &=
\f{1}{\delta}(\<J(1),\dot{\gamma}(1)\>-\<J(0),\dot{\gamma}(0)\>) \nonumber \\
&= \f{1}{\delta}\<\newparallel_{1}^{-1}J(1), \newparallel_{1}^{-1} \dot{\gamma}(1)\> \nonumber \\
&= \<w', v\> \\
\label{L2-first-derivative}
&= \<w,v\> - \f{\delta \varepsilon}{2} \Hess_{x_0}(v,v)(1-\|w\|^2).
\end{align}
By the formula (\ref{eq:variation-order-2}), the coefficient for the $\varepsilon^2$ term is
\begin{equation}
\label{L2-second-derivative}
\begin{aligned}
\left.\pss\right|_{s=0} L_2(s,\delta) &=
\f{1}{\delta}\left(\<\left.\Dt\right|_{t=1} J^\perp(t), J^\perp(1)\>
-\<\left.\Dt\right|_{t=0} J^\perp(t), J^\perp(0)\>\right)\\
&\quad +\f{1}{\delta} \left(\la \Dps \left.\ps\right|_{s=0} c_2(s,1),\dot{\gamma}(1)\ra -\la
\Dps \left.\ps\right|_{s=0} c_2(s,0),\dot{\gamma}(0)\ra\right).
\end{aligned}
\end{equation}
We show that all terms on the right vanish except for the first one which we then further estimate. First, since $\Dt \dot{\gamma}(t)$ = 0 and $\gamma$ is a constant speed geodesic, we note that
$$
\Dt J^\perp (t) = \Dt \left( J(t) - \f{\<J(t),\dot{\gamma}\>}{\|\dot{\gamma}\|^2}\dot{\gamma}(t) \right)= \Dt J(t) - \f{\<\Dt J(t), \dot{\gamma}(t)\>}{\|\dot{\gamma}(t)\|}\dot{\gamma}(t) = \left(\Dt J(t)\right)^\perp.
$$
Then for any $u \in T_{x_0}E$:
$$
\begin{aligned}
\< \left.\Dt\right|_{t=0} J^\perp(t), u \> &= \< \left( \left.\Dt\right|_{t=0} J(t) \right)^\perp, u \> \\
&= \< \lc \left.\f{D}{\p s} \pt \right|_{t=s=0} c_2(s,t) \rc^\perp, u \> \\
&= \< \left.\f{D}{\p s} \pt \right|_{t=s=0} c_2(s,t) , u \>,
\end{aligned}
$$
using the torsion-free property of the Levi-Civita connection on the second line, and using $u \perp \dot{\gamma}(0)$ on the third line by definition of $E$. To bring in the second fundamental form, in terms of notation for its definition \eqref{sff-definition}, the unit normal vector field on $E$ is
$\nu(c_2(s,0)) = \f{1}{\delta} \left. \pt \right|_{t=0}c_2(s,t)$ and the two submanifold directions are $w_1 = \left.\ps\right|_{s=0} c(s,0)$ and $w_2=u$. Then we can write 
$$
\nabla^M_{w_1} \nu(x_0) = \f{1}{\delta} \left.\Dps \pt \right|_{s=t=0} c_2(s,0),
$$
and thus
$$
\< \left.\f{D}{\p s} \pt \right|_{t=s=0} c_2(s,t) , u \> = \delta \< \nabla^M_{w_1} \nu (x_0), w_2 \> = \delta \II_{x_0}(w_1,w_2) = 0,
$$
because the second fundamental form vanishes at $x_0$ as per \cref{vanishing-sff}. Hence, 
$$
\forall u \in T_{x_0}E: \< \left.\Dt\right|_{t=0} J^\perp(t), u \> =0,
$$
which together with $\Dt J^\perp(0) = \left(\Dt J(0)\right)^\perp \in T_{x_0}E$ implies $\left.\Dt\right|_{t=0} J^\perp (t) =  0$. 

Similarly, with the same notation matching to \eqref{sff-definition} additionally with $W(c_2(s,0)) := \ps c(s,0)$ so that $W(x_0) = W(c_2(0,0)) =w_1$,
$$
\< \Dps \left.\ps\right|_{s=0} c(s,0),\dot{\gamma}(0)\> = \delta \<\nabla^M_{w_1} W(x_0), \nu(x_0)\> =- \delta \II_{x_0}(w_1,w_1) = 0.
$$
Since $s \mapsto \theta(s)$ is a geodesic, $\Dps \ps c(s,1) = \Ds \theta(s)= 0$, the third term is also 0. 

We now estimate the first term in (\ref{L2-second-derivative}). For any $t \in [0,1]$ we have the expansion with integral remainder,
$$
\begin{aligned}
\newparallel_t^{-1} J^\perp(t) &= J^\perp(0)+\int_0^t (t-u)\newparallel_u^{-1}
\DDt J^\perp(u) du \\
&=J^\perp(0)-\delta^2\int_0^t (t-u) \newparallel_u^{-1} R(J^\perp(u), \newparallel_u v)\newparallel_u v \; du \\
&=J^\perp(0)+O(\delta^2).
\end{aligned}
$$
This in particular also holds for $t=1$ so we deduce that
$$
\newparallel_1^{-1} J^\perp(1) - \newparallel_t^{-1}J^\perp(t) = O(\delta^2).
$$
Moreover, pulling back the Jacobi equation back to $T_{x_0}M$ by $\newparallel_1^{-1}$ and integrating over $t\in [0,1]$, using $\left.\Dt \right|_{t=0}J(t)=0$ as shown above, gives
$$
\newparallel_1^{-1} \left.\Dt\right|_{t=1} J^\perp(t) = - \delta^2 \int_0^1 \newparallel_t^{-1} R(J^\perp(t), \newparallel_t v)\newparallel_t v \; dt.
$$
Therefore, the second derivative is
$$
\begin{aligned}
\left.\pss\right|_{s=0} L_2(s,\delta) &=
\f{1}{\delta} \<\left.\Dt\right|_{t=1} J^\perp(t), J^\perp(1)\>\\
&=-\delta \int_0^1 \<\newparallel_t^{-1} R(J^\perp(t), \newparallel_t v) \newparallel_t v, \newparallel_1^{-1} J^\perp(1)\>dt\\
&=-\delta \int_0^1 \< R(\newparallel_t^{-1}J^\perp(t), v)v, \newparallel_1^{-1} J^\perp(1)\> dt +O(\delta^2)\\
&=-\delta \int_0^1 \< R(\newparallel_1^{-1}J^\perp(1)+O(\delta^2), v)v, \newparallel_1^{-1} J^\perp(1)\> dt +O(\delta^2)\\
&=-\delta \< R(w', v)v, w'\> +O(\delta^2)\\
&=-\delta \< R(w, v)v, w\>+O(\delta^2),
\end{aligned}
$$
using that $\|\newparallel_t^{-1} \circ R \circ \newparallel_t- R\|=O(\delta)$ and $w'=w+O(\delta \varepsilon)$.

Hence the coefficients of the expansion of $\tilde{L}_2(\varepsilon,\delta)$ in $\varepsilon$ are as required.
\end{proof}

We may deduce the expansion in $\varepsilon$ of the difference of signed lengths:
\begin{corollary}
\label{difference-signed-lengths}
For any vectors $v,w \in T_{x_0}M$ with $\|v\|=1, \|w\| \leq 1$ and $\delta, \varepsilon$
sufficiently small, 
$$
\begin{aligned}
\tilde{L}_2(\varepsilon, \delta) &- \tilde{L}_2(\varepsilon, 0)\\
&= 
\delta \left(1- \frac{\varepsilon^2}{2} \left[K_{x_0}(v,w)(\|w\|^2 - \<v,w\>^2) + \Hess_{x_0}
V(v,v) (1 - \|w\|^2)\right]\right)\\
& \qquad + O(\delta^2 \varepsilon^2) + O(\delta \varepsilon^3)\\
&=L_1(\varepsilon,\delta)+O(\delta^2 \varepsilon^2)+O(\delta\varepsilon^3),
\end{aligned}
$$
where the $O(\varepsilon^2\delta^2)+O(\delta \varepsilon^3)$ terms are uniformly bounded in $v$ and $w$.
\end{corollary}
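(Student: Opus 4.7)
The plan is to subtract a direct expansion of $\tilde{L}_2(\varepsilon, 0)$ from the expansion of $\tilde{L}_2(\varepsilon, \delta)$ supplied by \cref{prop:variation-2}, and then invoke smoothness to sharpen the residual error terms.

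First I would compute $\tilde{L}_2(\varepsilon, 0)$ directly. At $\delta = 0$ the point $y$ coincides with $x_0$, the parallel transport $\newparallel_1$ is the identity, and the transport vector reduces to $w' = w$; hence $\theta(s) = \exp_{x_0}(sw)$ and $\tilde{L}_2(\varepsilon, 0)$ equals the signed distance $f(\exp_{x_0}(\varepsilon w))$, with $f$ the signed distance function of \eqref{signed-projection-distance-def}. A Taylor expansion of $f \circ \exp_{x_0}$ at the origin yields
$$
\tilde{L}_2(\varepsilon, 0) = \varepsilon \langle v, w \rangle + O(\varepsilon^3),
$$
since $f(x_0) = 0$, $\nabla f(x_0) = v$ is the unit normal to $E$, $\Hess f(v, \cdot)(x_0) = 0$ follows from differentiating $\|\nabla f\|^2 \equiv 1$, and $\Hess f$ restricted to $T_{x_0}E$ coincides with the second fundamental form $\Pi$, which vanishes at $x_0$ by the remark preceding \cref{fig:c2}.

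Subtracting this from the expansion of \cref{prop:variation-2}, the $\varepsilon \langle v, w\rangle$ terms cancel and the remaining main contributions are exactly $\delta\bigl(1 - \tfrac{\varepsilon^2}{2}[K_{x_0}(v,w)(\|w\|^2 - \langle v, w\rangle^2) + \Hess_{x_0} V(v,v)(1 - \|w\|^2)]\bigr)$. The error contributions are the $O(\delta^2 \varepsilon^2)$ term from the Proposition (already in the desired form) plus two $O(\varepsilon^3)$ Taylor remainders, one from each expansion.

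The main technical step is upgrading these residual $O(\varepsilon^3)$ contributions to $O(\delta \varepsilon^3)$. The key observation is that the $\varepsilon^3$ remainder from \cref{prop:variation-2} has the form $\varepsilon^3 R(\varepsilon, \delta)$ for a smooth bounded function $R$, and the analogous remainder at $\delta = 0$ is $\varepsilon^3 R(\varepsilon, 0)$, so their difference is $\varepsilon^3 [R(\varepsilon, \delta) - R(\varepsilon, 0)] = O(\delta \varepsilon^3)$ by smoothness of $R$ in $\delta$. Equivalently, the full error $\phi(\varepsilon, \delta) := \tilde{L}_2(\varepsilon, \delta) - \tilde{L}_2(\varepsilon, 0) - \delta + \tfrac{\delta \varepsilon^2}{2}[\cdots]$ is smooth, vanishes identically on $\{\delta = 0\}$, and is bounded by $O(\delta^2 \varepsilon^2) + O(\varepsilon^3)$; extracting the hidden factor of $\delta$ from the $O(\varepsilon^3)$ part, via the smoothness argument of \cref{smooth-joint-order}, yields the first identity. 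The second equality follows by direct comparison with the expansion of $L_1(\varepsilon, \delta)$ in \cref{variation-1}, whose leading terms and error classes match.
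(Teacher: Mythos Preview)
Your proposal is correct and follows essentially the same approach as the paper. The only minor difference is that you compute $\tilde{L}_2(\varepsilon,0)=\varepsilon\langle v,w\rangle+O(\varepsilon^3)$ explicitly via the Taylor expansion of the signed distance $f$ (using $\nabla f(x_0)=v$, the eikonal identity $\|\nabla f\|=1$, and $\Pi_{x_0}=0$), whereas the paper obtains the same formula by simply setting $\delta=0$ in \cref{prop:variation-2}; the subtraction and the upgrade of the $O(\varepsilon^3)$ remainder to $O(\delta\varepsilon^3)$ via smoothness (your $R(\varepsilon,\delta)-R(\varepsilon,0)$ argument, i.e.\ \cref{smooth-joint-order}) and the final comparison with \cref{variation-1} are identical.
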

\begin{proof}
From the preceding proposition, subtracting the two lengths we first obtain almost the expansion above, except the last term is at first only $O(\varepsilon^3)$, but since the difference vanishes is also $O(\delta)$, it must in fact be $O(\delta \varepsilon^3)$, see \cref{smooth-joint-order}. The second equality in the statement of the corollary follows by comparison with \cref{variation-1} since the expansions agree up to the term $O(\delta^2 \varepsilon^2)+O(\delta\varepsilon^3)$.
\end{proof}

\section{Wasserstein distance approximations}
\label{section:wasserstein-approximations}

The following two preliminary lemmas relate the Ricci curvature to the sectional curvature by integral averages, thereby providing a bridge between the distance estimates of the previous section and the Wasserstein distance estimates that will follow.

The standard Ricci curvature is usually defined as the contraction of the Riemann curvature tensor and expressed equivalently by the sectional curvature,
$$
\forall v \in T_{x_0}M: \Ric(v,v) := \sum_{i=1}^n \la R(v, e_i)e_i, v \ra = \sum_{i=1}^n K(v,e_i)(\|v\|^2-\<v,e_i\>^2)
$$
for an arbitrary orthonormal basis $(e_i)_{i=1}^n$ of $T_{x_0}M$.
The Ricci curvature can equivalently be expressed as an average over a sphere or ball of arbitrary non-zero radius. We shall denote by $B_r$ the ball of radius $r>0$ in $T_{x_0}M$ centered at the origin and $\sigma$ the uniform surface measure on the sphere $\partial B_\varepsilon$. The proofs of the following two elementary items can be found in the appendix.

\begin{lemma}[Ricci curvature as average over a sphere]
\label{ricci-as-integral-over-sphere}
For any $x_0 \in M, v \in T_{x_0}M$ with $\|v\|=1$ and $\varepsilon >0$, it holds that
\begin{equation}
\label{eq:ricci-sphere-avg}
\frac{\varepsilon^2}{n} \Ric(v,v) = \dashint_{\partial B_\varepsilon} K(v,w)(\varepsilon^2- \la v,w \ra^2) d\sigma(w).
\end{equation}
\end{lemma}

We identify $T_{x_0}M$ with $\R^n$ in a standard way by matching an arbitrary orthonormal basis of $T_{x_0}M$ with another arbitrary orthonormal basis of $\R^n$, and denote by $dw$ the volume on $T_{x_0}M$ induced by any such identification.

\begin{lemma}[Ricci curvature as average over a ball]
\label{lemma:ricci-average-ball}
For any $x_0 \in M$,  $v \in T_{x_0}M, \|v\|=1$ and $\varepsilon >0$ it holds that
\begin{equation}
\begin{aligned}
\frac{\varepsilon^2}{n+2} \Ric(v,v)
&= \dashint_{B_\varepsilon} K(v,w) \lc \|w\|^2 - \la v,w \ra^2 \rc dw.
\end{aligned}
\end{equation}
\end{lemma}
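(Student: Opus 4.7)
The plan is to reduce the averaged integral over the ball to an integral over the sphere to which we can apply the preceding lemma. The crucial observation that enables this reduction is that the sectional curvature $K(v,w)$ depends only on the plane spanned by $v$ and $w$, not on the magnitude of $w$; so if we write $w=r\omega$ with $\omega\in S^{n-1}$ the unit sphere in $T_{x_0}M$, then $K(v,w)=K(v,\omega)$, and the integrand factors as
\[
K(v,w)\bigl(\|w\|^2-\langle v,w\rangle^2\bigr) = r^2\,K(v,\omega)\bigl(1-\langle v,\omega\rangle^2\bigr).
\]

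Next I would pass to polar coordinates on $T_{x_0}M$, writing $dw=r^{n-1}\,dr\,d\sigma_1(\omega)$ where $\sigma_1$ is the surface measure on $S^{n-1}$, and separate the radial and angular integrals:
\[
\int_{B_\varepsilon} K(v,w)\bigl(\|w\|^2-\langle v,w\rangle^2\bigr)\,dw
= \Bigl(\int_0^\varepsilon r^{n+1}\,dr\Bigr)\int_{S^{n-1}} K(v,\omega)\bigl(1-\langle v,\omega\rangle^2\bigr)\,d\sigma_1(\omega).
\]
The radial integral gives $\varepsilon^{n+2}/(n+2)$. The angular integral, by the preceding sphere-average lemma applied at $\varepsilon=1$, equals $|S^{n-1}|\,\Ric(v,v)/n$.

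Finally I would divide by $\vol(B_\varepsilon)=\varepsilon^n\,|S^{n-1}|/n$ to convert the integral into an average, upon which the constants $|S^{n-1}|$ and $n$ cancel and leave exactly $\varepsilon^2\Ric(v,v)/(n+2)$. There is no real obstacle here: the argument is a one-line polar-coordinates computation once the scaling invariance $K(v,r\omega)=K(v,\omega)$ is invoked, and it serves mainly as a bookkeeping bridge to the sphere-average version already proved.
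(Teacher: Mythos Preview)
Your proposal is correct and follows essentially the same approach as the paper: both pass to polar coordinates, exploit the scale invariance $K(v,r\omega)=K(v,\omega)$ to factor the integrand, compute the radial integral $\int_0^\varepsilon r^{n+1}\,dr$, and invoke the sphere-average lemma for the angular part. The only difference is cosmetic---the paper phrases the decomposition in terms of $\sigma(\partial B_r)/|B_\varepsilon|$ and average integrals over $\partial B_r$, while you work with unnormalized integrals and divide by $|B_\varepsilon|$ at the end---but the content is identical.
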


\subsection{Density estimates}
\label{density-estimates}
\cref{lemma:ricci-average-ball} represented the Ricci curvature as an average over a ball in the tangent space. Since it's more natural to consider measures on the manifold, we will apply Lemma \ref{lemma:density} to follow to compare uniform measures on geodesic balls in the manifold and the push-forwards of the uniform measure on the ball in the tangent space by the exponential
map. The density estimates of this section are auxilliary in proving the Wasserstein distance approximations of the next section.

Denote the inverse exponential map on a small enough neighbourhood $B_{\varepsilon_0}(x_0)$ as
$$
\exp_{x_0}^{-1}: B_{\varepsilon_0}(x_0) \subset M \rightarrow T_{x_0} M
$$
and its derivative as
$$
D \exp_{x_0}^{-1}: TM|_{B_{\varepsilon_0}(x_0)} \rightarrow T_{x_0} M.
$$
The notation $D\exp_{x_0}^{-1}(z)(w)$ will mean the evaluation at the base point $z \in B_\varepsilon(x_0)$ and tangent vector $w \in T_zM$.

In normal coordinates at $x_0$,
$g_{ij}(x_0) = \delta_{ij}$ and $\Gamma^i_{jk}(x_0)=0$ (\cite[Chap. 1.4]{MR3726907}) and the equation for geodesics
$
\ddot{x}^i(t) + \Gamma^{i}_{jk}(x(t))\dot{x}^j(t) \dot{x}^k(t) = 0
$
implies $\ddot{x}^{i}(0)=0$. Then for an initial vector $v$ with $\|v\|=t < \varepsilon$, the integral remainder formula gives
$$
\exp_{x_0}^i(v) = x^i(t) = v^i + \f{1}{2} \int_0^t \dddot{x}^i(s)(t-s)^2 ds = v^i + O(\varepsilon^3),
$$
and consequently $\det D \exp_{x_0}(v) = 1 + O(\varepsilon^2)$. Upon inversion we may finally deduce the well-known fact:

\begin{lemma}
\label{inverse-exponential-expansion}
For all sufficiently small $\varepsilon >0$, it holds that
$$
\sup_{z \in B_\varepsilon(x_0)} \det (D \exp_{x_0}^{-1}(z)) = 1+O(\varepsilon^2).
$$

\end{lemma}

\begin{figure}
\centering
\begin{tikzpicture}
\draw [line width=0.5mm] plot [smooth, tension=1] coordinates { (0,0) (2,0) (4,1) };
\node[circle, fill=black, scale=0.7, label=left:{$x_0$}] at (0,0) {};
\node[circle, fill=black, scale=0.7, label=right:{$z$}] at (4,1) {};
\node[circle, fill=black, scale=0.7, label=above:{$\exp_z(e_j(1))$}] at (1.5,3) {};
\node at (2, 0) [below right] {$\xi$};

\draw [line width=0.5mm] plot [smooth, tension=1] coordinates { (0,0) (0.5,1.8) (1.5,3) };

\draw [line width=0.5mm] plot [smooth, tension=1] coordinates { (4,1) (3,2.5) (1.5,3) };
\node at (3, 2.5) [above] {$\zeta$};

\node at (2, 1.5) [] {$c_3(s,\cdot)$};

\draw [line width=0.4mm,->] (4,1) -- (3.6,2.5);
\node at (3.6,2.7) [right] {$e_j(1)$};

\begin{scope}[very thick,decoration={
    markings,
    mark=at position 0.5 with {\arrow{>}}}
    ] 
\draw [line width=0.3mm, postaction={decorate}] plot [smooth, blue, tension=1] coordinates {
(0,0) (2,0.8) (3.5,2) };
\end{scope}

\end{tikzpicture}
\caption{Geodesic variation $c_3$}
\label{fig:0}
\end{figure}

\begin{notation}
\label{notation:3}
For $x \in M$, denote by
$$
\tilde{B}_\varepsilon(x) \subset T_{x}M, \quad 
B_\varepsilon(x) := \exp_{x}\lc \tilde{B}_\varepsilon(x) \rc \subset M
$$
the $\varepsilon$-balls in $T_{x}M$ and $M$, respectively, and the probability measures
$$
\begin{aligned}
d\tilde{\mu}^\varepsilon_x(w) &= \f{\one_{\tilde{B}_\varepsilon(x)}(w)}
{|\tilde{B}_\varepsilon(x)|} dw & \mathrm{on } \; T_xM,\\
\bar{\mu}_{x}^\varepsilon(z) &= (\exp_x)_*\tilde{\mu}_x^\varepsilon(z)& \mathrm{on } \; M,\\
d\mu^\varepsilon_x(z) &= \f{\one_{B_\varepsilon(x)}(z)}{\vol(B_\varepsilon(x))} d\vol(z) & \mathrm{on } \; M,\\
d\tilde{\nu}_x^\varepsilon(w) &= 
\f{e^{-V(\exp_x w)}}{\int_{\tilde{B}_\varepsilon(x)} e^{-V(\exp_x w')}d\tilde{\mu}_x^\varepsilon(w')}d\tilde{\mu}_x^\varepsilon(w) & \mathrm{on } \; T_xM,\\
\bar{\nu}_x^\varepsilon(z)&= (\exp_x)_*\tilde{\nu}_x^\varepsilon(z) & \mathrm{on } \; M,\\
d\nu_x^\varepsilon(z) &= 
\f{e^{-V(z)}}{\int_{B_\varepsilon(x)} e^{-V(z')}d\mu_x^\varepsilon(z')}d\mu_x^\varepsilon(z) &\mathrm{on } \; M.
\end{aligned}
$$
\end{notation}

The auxilliary measures $\bar{\mu}_x^\varepsilon, \bar{\nu}_x^\varepsilon$ will be instrumental in approximating
$W_1(\nu_{x_0}^\varepsilon,\nu_y^\varepsilon)$ (\cref{wasserstein-approximations}). 

\begin{definition}
Let $(M,g)$, $(\tilde{M}, \tilde{g})$ be Riemannian manifolds of dimension $n$ and $F: M \rightarrow \tilde{M}$ a diffeomorphism. The Jacobian at a point $x \in M$ is
$$
\det DF(x) := \det (\<DF(x)(e_i), \tilde{e}_j\>)
$$
for arbitrary orthonormal bases $(e_j)_{j=1}^m$ and $(\tilde{e}_j)_{j=1}^n$ of $T_xM$ and $T_{F(x)}\tilde{M}$.
\end{definition}

\begin{lemma}
\label{det-local-coords}
Let $(x^1,\ldots,x^n)$ be local coordinates at $x \in M$ and $(\tilde{x}^1,\ldots,\tilde{x}^n)$ local coordinates at $F(x) \in \tilde{M}$. Then for all $z$ in the domain of $(x^1,\ldots,x^n)$ such that $F(z)$ is in the domain of $(\tilde{x}^1,\ldots,\tilde{x}^n)$,
$$
\begin{aligned}
\det DF(z) &= \det \left( (g^{-\f{1}{2}}(z))^\ell_i \f{\partial (\tilde{x}^k \circ F)}{\partial x^\ell}(z)(\tilde{g}^{\f{1}{2}}(F(z)))^j_k \right) \\
&= \det ((g^{-\f{1}{2}}(z))^i_j) \det \left(\f{\partial (\tilde{x}^i \circ F)}{\partial x^j} \right) \det ((\tilde{g}^{\f{1}{2}}(F(z)))^i_j).
\end{aligned}
$$ 
\end{lemma}

\begin{proof}
The coordinate vector fields can be orthonormalized using the square root of the respective metrics as
$$
e_j(z) := (g^{-\f{1}{2}}(z))^i_j  \f{\partial}{\partial x^i}(z), \quad j=1,\ldots,n,
$$
and
$$
\tilde{e}_j(F(z)) := (\tilde{g}^{-\f{1}{2}}(F(z)))^i_j  \f{\partial}{\partial \tilde{x}^i}(F(z)), \quad j=1,\ldots,n.
$$
Moreover, the derivative in terms of coordinate vector and covector fields is
$$
DF(z) = \f{\partial (\tilde{x}^k \circ F)}{\partial x^\ell}(z) \f{\partial}{\partial \tilde{x}^k}(F(z)) \otimes dx^\ell(z).
$$
Then
$$
DF(x)(e_i) = (g^{-\f{1}{2}}(z))^\ell_i \f{\partial (\tilde{x}^k \circ F)}{\partial x^\ell}(z) \f{\partial}{\partial \tilde{x}^k}(F(z))
$$
and
$$
\begin{aligned}
\<DF(x)(e_i), \tilde{e}_j(F(z))\> &= \tilde{g}_{kq}(F(z)) (g^{-\f{1}{2}}(z))^\ell_i \f{\partial (\tilde{x}^k \circ F)}{\partial x^\ell}(z) (\tilde{g}^{-\f{1}{2}}(F(z)))^q_j \\
&= (g^{-\f{1}{2}}(z))^\ell_i \f{\partial (\tilde{x}^k \circ F)}{\partial x^\ell}(z) (\tilde{g}^{\f{1}{2}}(F(z)))^j_k
\end{aligned}
$$
and the form of the determinant follows. It can be split into a product of determinants by usual rules of linear algebra.
\end{proof}

\begin{lemma} 
\label{lemma:density}
For $\delta, \varepsilon$ sufficiently small, $\bar{\mu}_{x}^\varepsilon$ and $\mu^\varepsilon_{x}$ are equivalent measures on $M$, supported on $B_\varepsilon(x)$ with mutual density
\begin{equation}
\label{eq:density}
\frac{d \bar{\mu}_{x}^\varepsilon}{d\mu_{x}^\varepsilon}(z) =  1 + h(z, x),
\end{equation}
where $h: M \times M \rightarrow \R$ is an a. e. smooth function such that
$h(\cdot,x)=O(\varepsilon^2)$ for every $x \in M$ and $\int h(z,x)d\mu_x(z)=0$.
\end{lemma}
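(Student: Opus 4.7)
The plan is to express the Radon--Nikodym derivative $d\bar\mu_x^\varepsilon/d\mu_x^\varepsilon$ via the change-of-variables formula for the pushforward under $\exp_x$, and then to read off both the pointwise $O(\varepsilon^2)$ control and the normalising constant from \cref{metric-expansion}. There is no deep obstacle here; the main thing to track carefully is the bookkeeping between the two distinct normalisations $|\tilde B_\varepsilon(x)|$ and $\vol(B_\varepsilon(x))$, which themselves differ by $1+O(\varepsilon^2)$, so the pointwise estimate has to survive after normalising by the ratio of total masses.

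First I would fix an orthonormal basis $(e_1,\ldots,e_n)$ of $T_xM$ and the associated normal coordinates $(z^1,\ldots,z^n)$ at $x$, so that in these coordinates $\exp_x$ is identified with the identity $w^i e_i \mapsto (w^1,\ldots,w^n)$ and Lebesgue measure $dw$ on $T_xM$ pushes forward to $dz^1\cdots dz^n = d\vol(z)/\sqrt{\det g(z)}$. For any bounded measurable $\phi : M \to \R$ supported in $B_\varepsilon(x)$,
\begin{equation*}
\int_M \phi\,d\bar\mu_x^\varepsilon
= \frac{1}{|\tilde B_\varepsilon(x)|}\int_{\tilde B_\varepsilon(x)} \phi(\exp_x w)\,dw
= \int_{B_\varepsilon(x)} \frac{\phi(z)}{|\tilde B_\varepsilon(x)|\sqrt{\det g(z)}}\,d\vol(z).
\end{equation*}
Comparing with the definition of $\mu_x^\varepsilon$, this identifies the density
\begin{equation*}
\frac{d\bar\mu_x^\varepsilon}{d\mu_x^\varepsilon}(z)
= \frac{\vol(B_\varepsilon(x))}{|\tilde B_\varepsilon(x)|\sqrt{\det g(z)}}
\qquad\text{for } z\in B_\varepsilon(x).
\end{equation*}

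Next I would apply \cref{metric-expansion} twice. For every $z\in B_\varepsilon(x)$ the lemma gives $\det g(z) = 1+O(\varepsilon^2)$, hence $1/\sqrt{\det g(z)} = 1+O(\varepsilon^2)$ uniformly in $z$. In the same coordinates, the Riemannian volume of the geodesic ball satisfies
\begin{equation*}
\vol(B_\varepsilon(x)) = \int_{\tilde B_\varepsilon(x)}\sqrt{\det g(\exp_x w)}\,dw = |\tilde B_\varepsilon(x)|\bigl(1+O(\varepsilon^2)\bigr),
\end{equation*}
so $\vol(B_\varepsilon(x))/|\tilde B_\varepsilon(x)| = 1+O(\varepsilon^2)$. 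Multiplying the two factors and setting $h(z,x) := d\bar\mu_x^\varepsilon/d\mu_x^\varepsilon(z) - 1$ for $z\in B_\varepsilon(x)$ and $h(z,x) := 0$ otherwise, the two estimates combine to give $h(\cdot,x) = O(\varepsilon^2)$ uniformly in $z$. Smoothness of the metric components and joint smoothness of $(x,w)\mapsto \exp_x(w)$ ensure that $h$ is smooth both on the interior of $B_\varepsilon(x)$ and on its exterior, so the only discontinuity lies on the codimension-one set $\partial B_\varepsilon(x)$, giving the required a.e.\ smoothness.

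Finally, the mean-zero property follows automatically from the fact that $\bar\mu_x^\varepsilon$ and $\mu_x^\varepsilon$ are both probability measures on $M$: integrating the identity $d\bar\mu_x^\varepsilon = (1+h(\cdot,x))\,d\mu_x^\varepsilon$ over $M$ gives
\begin{equation*}
1 = \int_M d\bar\mu_x^\varepsilon = \int_M (1+h(z,x))\,d\mu_x^\varepsilon(z) = 1 + \int_M h(z,x)\,d\mu_x^\varepsilon(z),
\end{equation*}
so $\int_M h(z,x)\,d\mu_x^\varepsilon(z) = 0$ as claimed.
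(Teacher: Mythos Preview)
Your proof is correct and follows essentially the same approach as the paper: compute the density via the change-of-variables formula, control both the Jacobian factor and the ratio of normalising constants using \cref{metric-expansion}, then obtain the mean-zero property from both measures being probabilities. The only difference is that by working in normal coordinates at $x$ you make $\exp_x$ the identity in coordinates, so the Jacobian of $\exp_x^{-1}$ is trivially $1$; the paper instead keeps track of this factor separately and invokes \cref{inverse-exponential-expansion} to show it is $1+O(\varepsilon^2)$, which your choice of coordinates renders unnecessary.
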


\begin{proof} Let $(\tilde{x}^1,\ldots,\tilde{x}^n)$ be Euclidean coordinates of $T_xM$ and $(x^1, \ldots, x^n)$ the normal coordinates at $x_0 \in M$.
The uniform measures on $\tilde{B}_\varepsilon(x)$ and $B_\varepsilon(x)$ can be represented as differential forms,
\begin{equation}
\label{def:measures}
d\tilde{\mu}_{x}^\varepsilon :=
\tilde{C} \mathbbm{1}_{B_\varepsilon(0)} 
d\tilde{x}^1 \wedge \ldots \wedge d\tilde{x}^n,
\quad d\mu_{x}^\varepsilon := C \mathbbm{1}_{B_\varepsilon^M(x)} 
 \sqrt{\det(g_{ij})} 
dx^1 \wedge \ldots \wedge dx^n,
\end{equation}
where $\tilde{C} := \li \tilde{B}_\varepsilon(x) \ri^{-1}$ and $C := \mathrm{vol}(B_\varepsilon(x))^{-1}$ are the normalizing constants. Denoting $(\exp_x^{-1})^i=\tilde{x}^i\circ \exp_x^{-1}$, the push-forward $\bar{\mu}_{x}^\varepsilon$ is then
\begin{equation}
\label{mu-pushforward}
\begin{aligned}
d\bar{\mu}_{x}^\varepsilon(z) = (\exp_{x})_* d\tilde{\mu}_{x}^\varepsilon(z) 
&= \tilde{C} \mathbbm{1}_{B_\varepsilon(x)}(z)d(\tilde{x}^1 \circ \exp_{x}^{-1}) \wedge \ldots \wedge d(\tilde{x}^n \circ \exp_{x}^{-1}) \\
&= \tilde{C} \mathbbm{1}_{B_\varepsilon(x)}(z) \det \lc \frac{\partial (\tilde{x}^i \circ \exp^{-1}_{x})}{\partial x^j} (z)\rc dx^1 \wedge \ldots \wedge dx^n.
\end{aligned}
\end{equation}
See \cite[Lemma 9.11]{MR2954043} for a proof of the pushforward formula for differential forms. Comparing (\ref{def:measures}) and (\ref{mu-pushforward}),
since the determinants are non-vanishing we see that the
two measures are equivalent and their mutual density is given by
\begin{equation}
\label{eq:13}
\begin{aligned}
\frac{d \bar{\mu}_{x}^\varepsilon}{d\mu_{x}^\varepsilon}(z) &= \frac{\tilde{C}}{C} \det\lc \frac{\partial (\tilde{x}^i \circ \exp_{x}^{-1})}{\partial x^j} (z)\rc \times  \det(g_{ij}(z))^{-\frac{1}{2}} \\
&= \frac{\tilde{C}}{C} \det \left(\left((g^{-\f{1}{2}}(z))^i_k \frac{\partial (\tilde{x}^k \circ \exp_{x}^{-1})}{\partial x^j}(z)\right)_{ij} \right) \\
&= \frac{\tilde{C}}{C} \det D\exp_{x_0}^{-1}(z) \\
&= \frac{\tilde{C}}{C} (1+O(\varepsilon^2)),
\end{aligned}
\end{equation}
having applied \cref{det-local-coords} with $F= \exp_{x_0}^{-1}$ and $\tilde{g}_{ij}=\delta_{ij}$ on the penultimate line and \cref{inverse-exponential-expansion} on the last line.

It remains to see the ratio of the normalizing constants satisfies $\frac{\tilde{C}}{C} = 1 + O(\varepsilon^2)$ as a smooth function of the point $x \in M$. Integrating both sides of (\ref{eq:13}) over $B_\varepsilon(x)$ with respect to $\mu_{x}^\varepsilon$ we obtain
\begin{equation}
\label{eq:7}
\bar{\mu}_{x}^\varepsilon(B_\varepsilon(x)) = \frac{\tilde{C}}{C}
\mu_{x}^\varepsilon(B_\varepsilon(x)) (1+O(\varepsilon^2)).
\end{equation}
The probability measures $\bar{\mu}_{x}^\varepsilon, \mu_{x}^\varepsilon$ are both supported on $B_\varepsilon(x)$ so $\frac{\bar{\mu}_{x}^\varepsilon(B_\varepsilon^M(x))}{\mu_{x}^\varepsilon(B_\varepsilon^M(x))}=1$ and rearrangement of (\ref{eq:7}) gives $\frac{C}{\tilde{C}} = 1 + O(\varepsilon^2)$ as a smooth function of $x$. We obtain the same for the inverse ratio $\frac{\tilde{C}}{C}$ by the expansion $\frac{1}{1+z} = 1 - z + O(z^2)$. 

\noindent Since all factors in (\ref{eq:13}) are $1+O(\varepsilon^2)$, the same holds for their product, i.e.
\begin{equation}
\label{eq:density-factorization}
\frac{d\bar{\mu}_{x}^\varepsilon}{d\mu_{x}^\varepsilon}(u) 
=1+O(\varepsilon^2).
\end{equation}
Labelling the $O(\varepsilon^2)$ term as $h(\cdot,x)$, we have 
\begin{equation}
  \label{eq:22}
1 = \int \frac{d \bar{\mu}_{x}^\varepsilon}{d\mu_{x}^\varepsilon}(z) d\mu_{x}^\varepsilon(z)
= 1 + \int h(z, x) d\mu_{x}^\varepsilon(z),
\end{equation}
implying that $\int h(z,x) d\mu_{x}^\varepsilon(z)=0$. 
\end{proof}

\begin{lemma}
\label{lemma:density-mu-nu}
For any $x \in M$, the mutual densities expand as
$$
\begin{aligned}
\f{d\bar{\nu}_x^\varepsilon}{d\bar{\mu}_x^\varepsilon}(z) &=\one_{B_{\varepsilon}(x)}(z)
(1-\<\nabla V(x), \exp^{-1}_x(z) \> + h_1(z,x)),\\
\f{d\nu_x^\varepsilon}{d\mu_x^\varepsilon}(z) &=\one_{B_{\varepsilon}(x)}(z)
(1-\<\nabla V(x), \exp^{-1}_x(z) \> + h_2(z,x)),
\end{aligned}
$$
where $h_i: M \times M \rightarrow \R$ are a.e. smooth functions such that
$h_i(\cdot,x)=O(\varepsilon^2)$ for every $x \in M$ and $\int h_i(z,x)d\bar{\mu}^\varepsilon_x(z)=0$ for $i=1,2$.
\end{lemma}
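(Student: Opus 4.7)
The plan is to Taylor-expand $e^{-V}$ in normal coordinates at $x$ to first order, compute the two normalizing constants $Z := \int e^{-V}d\mu_x^\varepsilon$ and $\tilde Z := \int e^{-V}d\bar\mu_x^\varepsilon$ by exploiting the central symmetry of the ball in $T_xM$, and then read off the integral condition by integrating the resulting density expansion against $d\bar\mu_x^\varepsilon$.

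Concretely, in normal coordinates at $x$, for $z \in B_\varepsilon(x)$ one has $V(z) = V(x) + \<\nabla V(x), \exp_x^{-1}(z)\> + O(\varepsilon^2)$, hence
$$e^{-V(z)} = e^{-V(x)}\bigl(1 - \<\nabla V(x), \exp_x^{-1}(z)\> + R(z,x)\bigr),$$
with $R(\cdot,x)=O(\varepsilon^2)$. Pushing forward the definition of $\tilde\nu_x^\varepsilon$ through $\exp_x$ gives $d\bar\nu_x^\varepsilon/d\bar\mu_x^\varepsilon = \one_{B_\varepsilon(x)}e^{-V}/\tilde Z$, and the analogous formula with $Z$ holds for $d\nu_x^\varepsilon/d\mu_x^\varepsilon$. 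The central observation is that $\tilde\mu_x^\varepsilon$ is the uniform measure on the centered Euclidean ball $\tilde B_\varepsilon(x)\subset T_xM$, so $\int w\,d\tilde\mu_x^\varepsilon(w)=0$; upon pushing forward,
$$\int \<\nabla V(x), \exp_x^{-1}(z)\>\,d\bar\mu_x^\varepsilon(z) = 0.$$
Consequently $\tilde Z = e^{-V(x)}(1+O(\varepsilon^2))$, and by \cref{lemma:density} the same integral against $d\mu_x^\varepsilon$ differs by only $O(\varepsilon^3)$ (since $d\bar\mu_x^\varepsilon - d\mu_x^\varepsilon$ carries the $O(\varepsilon^2)$ factor $h$), so also $Z = e^{-V(x)}(1+O(\varepsilon^2))$. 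Dividing $e^{-V(z)}$ by each normalizer now yields the claimed expansions with $h_i(\cdot,x) = O(\varepsilon^2)$, uniformly for $x$ in a compact set.

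For the integral condition, integrate the expansion of $d\bar\nu_x^\varepsilon/d\bar\mu_x^\varepsilon$ against $d\bar\mu_x^\varepsilon$: the left side is $\bar\nu_x^\varepsilon(M)=1$, the leading $\one_{B_\varepsilon(x)}$ term contributes $\bar\mu_x^\varepsilon(B_\varepsilon(x))=1$, and the gradient term vanishes by the symmetry above, leaving $\int h_1\,d\bar\mu_x^\varepsilon = 0$ exactly. The analogous identity for $h_2$ follows in the same way, after absorbing the $O(\varepsilon^3)$-gap between $Z$ and $\tilde Z$ into the $O(\varepsilon^2)$ remainder. The main point to handle carefully is the symmetry argument that kills the linear contribution to $\tilde Z$, since it is precisely what pushes the remainder in the density from $O(\varepsilon)$ down to $O(\varepsilon^2)$; everything else is routine Taylor expansion and the change-of-variables bookkeeping already carried out in \cref{lemma:density}.
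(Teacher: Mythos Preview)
Your proposal is correct and follows essentially the same approach as the paper: expand $e^{-V(z)}=e^{-V(x)}(1-\langle\nabla V(x),\exp_x^{-1}(z)\rangle+O(\varepsilon^2))$, use the central symmetry of $\tilde\mu_x^\varepsilon$ to kill the linear term in the normalizer so that $\tilde Z=e^{-V(x)}(1+O(\varepsilon^2))$, divide, and then recover the mean-zero property of $h_i$ by integrating the density against $\bar\mu_x^\varepsilon$. The one place where you add a small amount of detail beyond the paper is in handling $Z$ for the second density via \cref{lemma:density} to quantify the $O(\varepsilon^3)$ gap; the paper simply says the second expansion is ``obtained the same way,'' and your absorption of this gap into the $O(\varepsilon^2)$ remainder is the right way to make the stated integral condition for $h_2$ exact.
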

\begin{proof}
From \cref{notation:3}, 
\begin{equation}
\label{dnu}
\f{d\bar{\nu}_x^\varepsilon}{d\bar{\mu}_x^\varepsilon}(z) = \f{d (\exp_x)_*\tilde{\nu}_x^\varepsilon}{d (\exp_x)_* \tilde{\mu}_x^\varepsilon}(z) = \f{d \tilde{\nu}_x^\varepsilon}{d\tilde{\mu}_x^\varepsilon}(\exp_x^{-1}z) = \f{e^{-V(z)}}{\int_{B_\varepsilon(x)} e^{-V(z)}d\bar{\mu}_x^\varepsilon(z)}.
\end{equation}
Expand the numerator in \eqref{dnu} as 
$$
\begin{aligned}
e^{-V(z)} &= e^{-V(x)} e^{V(x)-V(z)}\\
&=e^{-V(x)}(1-\<\nabla V(x), \exp^{-1}_x(z) \>+O(\varepsilon^2)).
\end{aligned}
$$
Similarly for the denominator,
$$
\begin{aligned}
e^{-V(x)}
\int_{B_\varepsilon(x)} (1-\<\nabla
V(x), \exp^{-1}_x(z') \>+O(\varepsilon^2)) d\bar{\mu}_x^\varepsilon(z')
= e^{-V(x)}\left(1+ O(\varepsilon^2) \right),
\end{aligned}
$$
where the $\nabla V$ term vanishes because
\begin{equation}
\label{eq:10}
\int_{B_\varepsilon(x)} \< \nabla V(x),\exp^{-1}_x(z') \> d\bar{\mu}_x^\varepsilon(z') = 
\langle \nabla V(x), \int_{B_\varepsilon(0)} w d\tilde{\mu}_x^\varepsilon(w)\rangle = 0.
\end{equation}
Cancelling the factor $e^{-V(x)}$, the density (\ref{dnu}) can thus be written as
$$
\f{d\bar{\nu}_x^\varepsilon}{d\bar{\mu}_x^\varepsilon}(z) = \f{1-\<\nabla V(x), \exp^{-1}_x(z)
\>+O(\varepsilon^2)}{1+O(\varepsilon^2)}
=1-\<\nabla V(x), \exp^{-1}_{x}(z) \> + O(\varepsilon^2).
$$
Writing the $O(\varepsilon^2)$ term explicitly as $h(\cdot,x)$, we have 
$$
\begin{aligned}
1 = \int \f{d\bar{\nu}_x^\varepsilon}{d\bar{\mu}_x^\varepsilon}(z) d\bar{\mu}_x(z)
&= \int (1-\<\nabla V(x), \exp^{-1}_{x}(z) \>+h(z,x)) d\bar{\mu}_x(z)\\
&= 1 + \int h(z,x)d\bar{\mu}_x(z),
\end{aligned}
$$
since the $\nabla V$ term vanishes again, and this implies $\int h(z,x)d\bar{\mu}_x(z)=0$.

The expansion of the density $\f{d\nu_x^\varepsilon}{d\mu_x^\varepsilon}(z)$ is obtained the same way, starting from
$$
\f{d\nu_x^\varepsilon}{d\mu_x^\varepsilon}(z) = \f{e^{-V(z)}}{\int_{B_\varepsilon(x)} e^{-V(z)}d\mu_x^\varepsilon(z)}.
$$
\end{proof}

Similarly to \cref{lemma:density}, we have the density of the flat approximation of the non-uniform test measures with respect to the true non-uniform test measures:
\begin{lemma}
\label{lemma:density-2}
For every $x \in M$,
$$
\f{d\bar{\nu}_x^\varepsilon}{d\nu_x^\varepsilon}(z) = 1+h(z,x),
$$
where $h: M \times M \rightarrow \R$ is an a. e. smooth function such that $h(\cdot,x)=O(\varepsilon^2)$ for every $x \in M$ and $\int h(z,x)d\bar{\mu}_x(z)=\int h(z,x)d\bar{\nu}_x(z)=0$.
\end{lemma}

\begin{proof}
By the preceding lemma,
$$
\begin{aligned}
\f{d\bar{\nu}_x^\varepsilon}{d\nu_x^\varepsilon}(z) &= \f{d\bar{\nu}_x^\varepsilon}{d\bar{\mu}_{x}^\varepsilon}(z) \f{d\mu_x^\varepsilon}{d\nu_x^\varepsilon}(z) \\
&= \one_{B_{\varepsilon}(x)}(z) \f{1-\<\nabla V(x), \exp^{-1}_x(z) \> + h_1(z,x)}{1-\<\nabla V(x), \exp^{-1}_x(z) \> + h_2(z,x)} \\
&= \one_{B_{\varepsilon}(x)}(z) (1-\<\nabla V(x), \exp^{-1}_x(z) \> + h_1(z,x))\\
&\hspace{1.6cm} \times (1+\<\nabla V(x), \exp^{-1}_x(z) \> - h_2(z,x)+O(h_2(z,x)^2))\\
&= \one_{B_{\varepsilon}(x)}(z)  (1 + h_1(z,x) -h_2(z,x) +O(h_2(z,x)^2)) \\
&= \one_{B_{\varepsilon}(x)}(z) (1+h(z,x)),
\end{aligned}
$$
merging the remainder terms into $h(z,x)$ on the last line. The mean zero property of $h$ again follows because $\nu_x^\varepsilon$ and $\bar{\nu}_x^\varepsilon$ are probability measures.
\end{proof}

\subsection{An approximate transport map}
\label{approximate-transport}

Recall we wish to obtain an expansion of $W_1(\nu_{x_0}^\varepsilon, \nu_y^\varepsilon)$ in $\varepsilon$ and $\delta$. For this we propose an "approximate" transport map $T: B_\varepsilon(x_0) \rightarrow B_\varepsilon(y)$  which realizes the distance from \cref{variation-1} in the sense that $d(z,Tz) = L_1(\varepsilon,\delta)$. We first define such a map using a map between tangent spaces $\tilde{T}: \tilde{B}_\varepsilon(x_0) \rightarrow \tilde{B}_\varepsilon(y)$.

\begin{figure}
  \centering
\begin{tikzpicture}
\draw[rotate=30] (0,0) ellipse (1.4 and 0.5);
\draw (-1.25,-0.6) -- (-1.25,-3.5);
\draw (-1.25,-3.5) arc (180:360:1.25 and 0.5);
\draw [dashed] (-1.25,-3.5) arc (180:360:1.25 and -0.5);
\draw (1.25,-3.5) -- (1.25,0.6);  
\node[label=above left:{$\bar{\nu}_{x_0}^\varepsilon$}] at (-0.5,0) {};;
\node[circle, fill=black, scale=0.4, label=left:{$x_0$}] at (0,-3.5) {};
\draw [line width=0.4mm,->] (0,-3.5) -- (1.1,-2.7);
\node[label=left:{$\nabla V(x_0)$}] at (1.1,-2.7) {};;

\draw[rotate=15] (4.8,-1.35) ellipse (1.3 and 0.5);
\draw (3.75,-0.4) -- (3.75,-3.5);
\draw (3.75,-3.5) arc (180:360:1.25 and 0.5);
\draw [dashed] (3.75,-3.5) arc (180:360:1.25 and -0.5);
\draw (6.25,-3.5) -- (6.25,0.3);  
\node[label=above left:{$T_*\bar{\nu}_{x_0}^\varepsilon \sim \bar{\nu}_y^\varepsilon$}] at (4.3,0) {};
\node[circle, fill=black, scale=0.4, label=left:{$y$}] at (5,-3.5) {};
\draw [line width=0.4mm,->] (5,-3.5) -- (6,-3);
\node[label=above left:{$\nabla V(y)$}] at (6,-3) {};;

\draw [line width=0.4mm,->] (2,-2) -- (3,-2);
\node[label=above:{$T_*$}] at (2.5,-2) {};

\draw[dashed] (-3,-5) -- (8,-5) -- (8.5,-2.5) -- (6.5, -2.5);
\draw[dashed] (1.5, -2.5) -- (3.5, -2.5);
\draw[dashed] (-3,-5) -- (-2, -2.5) -- (-1.3, -2.5);
\node[label=above left:{$M$}] at (8,-5) {};;

\end{tikzpicture}
\caption{For small $\varepsilon>0$, the densities of the non-uniform measures $\bar{\nu}_{x_0}^\varepsilon, \bar{\nu}_y^\varepsilon$ resemble cylinders with a slant top, the slanting being described by the gradient of $V$. This motivates the choice of $T$ as a  scaled difference of the gradients of the tops, together with parallel translation from $B_\varepsilon(x_0)$ to $B_\varepsilon(y)$.}
\label{fig:transport-map}
\end{figure}
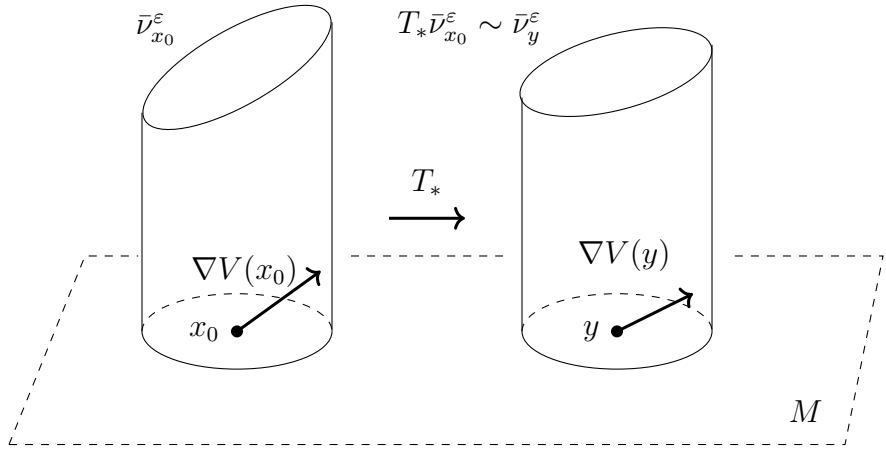

\begin{definition}
\label{T-tilde}
Define the map $\tilde{T}: \tilde{B}_\varepsilon(x_0) \rightarrow 
\tilde{B}_\varepsilon(y)$ as
\begin{equation}
\label{eq:transport}
\tilde{T}(w) := \ \newparallel_1 w- \f{1}{2}(\varepsilon^2 -
\|w\|^2) (\nabla V(y)- \newparallel_1 \nabla V(x_0)).
\end{equation}
Denote by $D \tilde{T}(w): T_{x_0}M \rightarrow T_y M$ its derivative at the point $w \in T_{x_0}M$. The notation $D\tilde{T}(w_1)(w_2)$ will mean the derivative at $w_1 \in T_{x_0}M$ in the direction of $w_2 \in T_{x_0}M$.
\end{definition}

\begin{remark}
  The first part of \eqref{eq:transport} is the same as in \cite{MR2484937} for the case of uniform measures. Parallel translation represents the closest analogue to translating all points by $y-x_0$ if the manifold were a Euclidean space, and turns out to be an approximate transport map for uniform test measures, i.e. $V=0$. In this case, the approximate transport is also optimal up to and including third order terms jointly in $\delta$ and $\varepsilon$.

  The second part in \eqref{eq:transport} is a novel adjustment to account for the non-uniformity of the measures being transported. It is motivated by \cref{lemma:density-mu-nu}, which says that for small $\varepsilon >0$, the densities of $\bar{\nu}_{x_0}^\varepsilon$ and $\bar{\nu}_y^\varepsilon$ are well approximated by affine functions on the respective supports. See \cref{fig:transport-map} for illustration. Any near-optimal transport of mass from $\bar{\nu}_{x_0}^\varepsilon$ to $\bar{\nu}_y^\varepsilon$ should thus consist of a translation from $B_\varepsilon(x_0)$ to $B_\varepsilon(y)$ together with a realignment of mass according to the difference of the gradients $\newparallel_1^{-1} \nabla V(y)-\nabla V(x_0)$, scaled with the distance from the centre of the support. This is because points on the boundary of $B_\varepsilon(x_0)$ should only be translated, while points near $x_0$ should be translated as well as moved in the direction of $-(\newparallel_1^{-1} \nabla V(y) - \nabla V(x_0))$. The above scaling by the factor $\f{1}{2}(\varepsilon^2-\|w\|^2)$ will yield a good enough approximate transport map because
  $$
  D \tilde{T}(w) = \mathrm{Id} + (\newparallel_1 \nabla V(y)-\nabla V(x_0)) \<w, \cdot\>,
  $$
  as shown in \cref{transport-map-jacobian}, which leads to a desirable cancellation of first order terms in the density $\f{d (T_*\bar{\nu}_{x_0}^\varepsilon)}{d \bar{\nu}^\varepsilon_y}$, shown in \cref{lemma:2}.

  In summary, \eqref{eq:transport} should be interpreted as parallel translation in the direction $v$ and distance $\delta$, with additional simultaneous shift in the direction $-(\newparallel_1^{-1} V(y)- \nabla V(x_0))$ and distance $\f{1}{2}(\varepsilon^2 -\|w\|^2)$ to adjust for non-uniformity of the test measures.
\end{remark}

To prove the next fact we will employ the auxilliary map $\hat{T}:\tilde{B}_\varepsilon(x_0) \rightarrow \tilde{B}_\varepsilon(x_0)$ defined as
\begin{equation}
\label{T-hat}
\hat{T}(w) :=  w- \f{1}{2}(\varepsilon^2 -
\|w\|^2) (\newparallel_1^{-1} \nabla V(y)-  \nabla V(x_0)),
\end{equation}
so that $\tilde{T} = \ \newparallel_1 \circ \; \hat{T}$.

\begin{lemma}
For $0<\varepsilon <1$ and $\delta >0$ sufficiently small,
the map $\tilde{T}$ is a well-defined diffeomorphism.
\end{lemma}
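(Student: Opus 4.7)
My plan is to establish well-definedness (which is immediate) and then the diffeomorphism property by treating $\tilde T$ as a small perturbation of the linear isometry $\newparallel_1$. Well-definedness of $\tilde T$ is essentially a type check: $\newparallel_1 w \in T_y M$, and both $\nabla V(y), \newparallel_1 \nabla V(x_0) \in T_y M$, so the whole expression lies in $T_y M$ and depends smoothly (indeed polynomially) on $w \in T_{x_0}M$.

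For the diffeomorphism claim, I would first compute the differential. Setting $b := \nabla V(y) - \newparallel_1 \nabla V(x_0)$, differentiation of $\tilde T(w) = \newparallel_1 w - \tfrac12(\varepsilon^2 - \|w\|^2)\,b$ in a direction $u \in T_{x_0}M$ gives
$$
d\tilde T(w)(u) = \newparallel_1 u + \langle w, u\rangle\, b.
$$
Since $d(x_0, y) = \delta$ and $\nabla V$ is smooth on a fixed compact neighbourhood of $x_0$, one has $\|b\| = O(\delta)$. The rank-one correction $P_w : u \mapsto \langle w, u\rangle b$ therefore satisfies $\|P_w\|_{\mathrm{op}} \leq \|w\|\,\|b\| = O(\delta\varepsilon)$. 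Factoring out the isometry,
$$
d\tilde T(w) = \newparallel_1 \circ \bigl(\mathrm{Id}_{T_{x_0}M} + \newparallel_1^{-1} P_w\bigr),
$$
and the Neumann series makes the bracketed factor invertible at every $w \in \tilde B_\varepsilon(x_0)$, provided $\delta\varepsilon$ is small enough. The inverse function theorem then yields local diffeomorphism at each such $w$.

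To upgrade local to global, I would establish injectivity on $\tilde B_\varepsilon(x_0)$. If $\tilde T(w_1) = \tilde T(w_2)$, then
$$
\newparallel_1(w_1 - w_2) = \tfrac12\bigl(\|w_1\|^2 - \|w_2\|^2\bigr)\,b.
$$
Taking norms and using that $\newparallel_1$ is an isometry together with $\|w_i\| < \varepsilon$,
$$
\|w_1 - w_2\| \leq \tfrac12 \bigl(\|w_1\| + \|w_2\|\bigr)\,\|w_1 - w_2\|\cdot \|b\| \leq O(\delta\varepsilon)\,\|w_1 - w_2\|,
$$
which forces $w_1 = w_2$ once $\delta\varepsilon$ is small. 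Combining local invertibility on the open set $\tilde B_\varepsilon(x_0)$ with this injectivity yields that $\tilde T$ is a diffeomorphism onto its (open) image in $T_y M$.

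The only real obstacle is bookkeeping: one needs a single smallness threshold on $\delta, \varepsilon$ that simultaneously makes the Neumann series converge and makes the contraction constant in the injectivity estimate strictly less than one. All implicit constants depend solely on pointwise bounds for $\nabla V$ on a fixed compact neighbourhood of $x_0$, so they are absorbed by the blanket ``sufficiently small'' convention already in force throughout the paper.
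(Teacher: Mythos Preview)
Your perturbation argument correctly shows that $\tilde T$ is a smooth injective local diffeomorphism on $\tilde B_\varepsilon(x_0)$, hence a diffeomorphism onto an open subset of $T_yM$. But the lemma asserts more: the map is declared with codomain $\tilde B_\varepsilon(y)$, and the paper subsequently applies $\tilde T^{-1}$ to arbitrary points of $\tilde B_\varepsilon(y)$ and uses that $T_*\bar\nu_{x_0}^\varepsilon$ is supported on $B_\varepsilon(y)$. So one must show that $\tilde T(\tilde B_\varepsilon(x_0))=\tilde B_\varepsilon(y)$, and your proof establishes neither the containment nor the surjectivity. Your ``well-definedness'' check only verifies that $\tilde T(w)\in T_yM$; the estimate $\|b\|=O(\delta)$ by itself does not prevent an interior point from being pushed to norm $\geq\varepsilon$.

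The paper handles this by factoring $\tilde T=\ \newparallel_1\circ\hat T$ and reducing $\hat T$ to a one-dimensional problem along the direction $\mathbf e$ of the perturbation: writing $w=u+r\mathbf e$ with $u\perp\mathbf e$, one gets $\hat T(u+r\mathbf e)=u+h_u(r)\mathbf e$ with $h_u(r)=r-\tfrac{\alpha}{2}(\varepsilon^2-\|u\|^2-r^2)$. Since $h_u$ fixes the endpoints $\pm\sqrt{\varepsilon^2-\|u\|^2}$ and has derivative $1+\alpha r>0$ once $\alpha\varepsilon<1$, it is a monotone bijection of the interval to itself, which gives containment and surjectivity in one stroke (and even an explicit formula for $\hat T^{-1}$). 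Your approach can be repaired: observe that $\hat T$ extends continuously to the closed ball and equals the identity on $\partial\tilde B_\varepsilon(x_0)$ (the factor $\varepsilon^2-\|w\|^2$ vanishes there); your injectivity estimate then extends to the closed ball, and a short connectedness or degree argument yields that $\hat T$ maps the open ball onto itself. Without some such step, however, the codomain claim remains unproved.
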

\begin{proof}
    Since $\newparallel_1: \tilde{B}_\varepsilon(x_0) \rightarrow \tilde{B}_\varepsilon(y)$ is a diffeomorphism, it is sufficient to show that $\hat{T}$ defined by \eqref{T-hat} is a diffeomorphism. We show the latter by finding the smooth inverse $\hat{T}^{-1}$. To further simplify notation we write
    $$
    \hat{T}(w) = w - \f{1}{2} (\varepsilon^2-\|w\|^2) \alpha \mathbf{e}
    $$
    with
    $$
    \mathbf{e} := \f{\newparallel_1^{-1} \nabla V(y)-  \nabla V(x_0)}{\|\newparallel_1^{-1} \nabla V(y)-  \nabla V(x_0))\|}, \quad \alpha := \|\newparallel_1^{-1} \nabla V(y)-  \nabla V(x_0))\| = O(\delta)>0.
    $$
    Decompose any vector $w \in \tilde{B}_\varepsilon(x_0)$ as $w = u+ r \mathbf{e}$ with $u \perp \mathbf{e}$ and $-\sqrt{\varepsilon^2-\|w\|^2} \leq r \leq \sqrt{\varepsilon^2-\|w\|^2}$ so that
    $$
    \hat{T}(u+r \mathbf{e}) = u +r \mathbf{e} -\f{1}{2}(\varepsilon^2-\|u\|^2-r^2)\alpha \mathbf{e} = u+h_u(r)\mathbf{e},
    $$
    where $h_u(r):= r- \f{1}{2}(\varepsilon^2-\|u\|^2-r^2)\alpha$. It is clear that
    $$
    h_u(\pm \sqrt{\varepsilon^2-\|u\|^2})=\pm\sqrt{\varepsilon^2-\|u\|^2}, \quad \f{dh_u}{dr}(r) = 1+\alpha r.
    $$
    Hence as long as $\alpha \varepsilon >-1$, the map
    $$
    h_u: [-\sqrt{\varepsilon^2-\|u\|^2},\sqrt{\varepsilon^2-\|u\|^2}] \rightarrow [-\sqrt{\varepsilon^2-\|u\|^2},\sqrt{\varepsilon^2-\|u\|^2}]
    $$
    is a diffeomorphism. Since the decomposition $w=u+r\mathbf{e}$ is unique and $(u,r) \mapsto h_u(r)$ is smooth, we conclude that $\hat{T}$ is a diffeomorphism. A fortiori, solving $h_u(r)=s$ for $r$ we may obtain the inverse explicitly as
    $$
    \hat{T}^{-1}(u+s\mathbf{e}) = u + \f{1}{\alpha}\left(-1+ \sqrt{1+\alpha^2(\varepsilon^2-\|u\|^2) +2\alpha y}\right) \mathbf{e},
    $$
    completing the proof.
\end{proof}

We refer back to \cref{T-tilde} for the definition of the symbols $\tilde{T}, D \tilde{T}(w)$.

\begin{lemma}
\label{transport-map-jacobian}
The Jacobian of $\tilde{T}$ satisfies for any $w \in \tilde{B}_\varepsilon(x_0)$
\begin{equation}
\label{jacobian-formula}
\det D \tilde{T}(w) = 1+\<\newparallel_1^{-1} \nabla V(y)-\nabla V(x_0), w\> + O(\delta^2 \varepsilon^2),
\end{equation}
where the $O(\delta^2 \varepsilon^2)$ term is uniformly bounded over $w$.
\end{lemma}
\begin{proof}
As $\newparallel_1: T_{x_0}M \rightarrow T_yM$ is a linear isomorphism, the derivative $D \newparallel_1 (w): T_{x_0}M \rightarrow T_yM$ at any $w \in T_{x_0}M$ coincides with parallel translation, i.e. $D \newparallel_1(w) = \; \newparallel_1$. Then by the chain rule
\begin{equation}
\label{T-tilde-derivative}
\begin{aligned}
D\tilde{T}(w) &= D (\newparallel_1 \circ \; \hat{T})(w) = \; \newparallel_1 \circ \; D\hat{T}(w) \\
&= \; \newparallel_1 \circ \Big( Id + \<w,\cdot\> (\newparallel_1^{-1} \nabla V(y) -\nabla V(x_0)) \Big) \\
&= \; \newparallel_1 + \<w, \cdot\>(\nabla V(y) - \newparallel_1 \nabla V(x_0))
\end{aligned}
\end{equation}
by taking the plain Euclidean derivative of \eqref{T-hat} to obtain the second line.

Consider $(e_i)_{i=1}^n$ an orthonormal basis of $T_{x_0}M$ and $(\newparallel_1 e_i)_{i=1}^n$ the corresponding parallel-translated orthonormal basis of $T_yM$.
With respect to these bases, the components of $D\tilde{T}(w)$ at any $w \in \tilde{B}_{\varepsilon}(x_0)$ are expressed using \eqref{T-tilde-derivative} as
\begin{equation}
\label{eq:determinant}
\begin{aligned}
\<D \tilde{T}(w)(e_i),\newparallel_1 e_j\> &=\<\newparallel_1 e_i, \newparallel_1 e_j\> +\<
w, e_i\> 
\<\nabla V(y)- \newparallel_1 \nabla V(x_0), \newparallel_1 e_j\> \\
&=\delta_{ij}+ \<w,e_i\>\<\newparallel_1^{-1}\nabla V(y)- \nabla V(x_0),e_j\>.
\end{aligned}
\end{equation}
Then
\begin{equation}
\label{eq:determinant-2}
\det D \tilde{T}(w) = \det \left(\delta_{ij} + \<w,e_i\> \<\newparallel^{-1}_1 \nabla V(y)-\nabla 
V(x_0),e_j\>\right)_{ij},
\end{equation}
where $\delta_{ij}$ stands for the Kronecker delta. Since
$$
\<w,e_i\> \<\newparallel^{-1}_1 \nabla V(y)-\nabla V(x_0),e_j\>=O(\delta\varepsilon),
$$
we can deduce that
\begin{equation}
  \label{det-DT}
\begin{aligned}
\det D\tilde{T}(w) &= 1 + \sum_{i=1}^n \<w,e_i\> \< \newparallel_1^{-1} \nabla V(y)- \nabla
V(x_0),e_i\>+O(\varepsilon^2 \delta^2)\\
&=1 + \<w, \newparallel_1^{-1} \nabla V(y) - \nabla V(x_0)\>+O(\varepsilon^2 \delta^2),
\end{aligned}
\end{equation}
as required.
Here we made use of the following well-known fact about determinants:
if $A(\varepsilon,\delta)= (a_{ij}(\varepsilon,\delta)) = \delta_{ij} + b_{ij}(\varepsilon,\delta)$ is a smooth $n
\times n$ matrix-valued function where $b_{ij}(\delta, \varepsilon)=O(\delta \varepsilon)$, then
$$
\det A(\varepsilon, \delta) = 1 + \sum_{i=1}^n b_{ii}(\varepsilon,\delta) + O(\delta^2 \varepsilon^2).
$$
\end{proof}

\begin{remark}
\label{transport-map-parallel-approx}
Since $\newparallel_1^{-1} \nabla V(y) - \nabla V(x_0)=O(\delta)$, it follows from the definition (\ref{eq:transport}) that
\begin{equation}
\label{eq:21}
\tilde{T}w = \; \newparallel_1 w + O(\delta \varepsilon^2) \quad \forall w \in
\tilde{B}_\varepsilon(x_0).
\end{equation}
Denoting $\tilde{w} = \tilde{T}w$, this implies also
\begin{equation}
\label{T-inverse-approximation}
\tilde{T}^{-1}\tilde{w} = \; \newparallel_1^{-1} \tilde{w} + O(\delta \varepsilon^2) \quad \forall \tilde{w} \in
\tilde{B}_\varepsilon(y),
\end{equation}
because the parallel translation is isometric.
As a consequence of (\ref{jacobian-formula}) and (\ref{T-inverse-approximation}), we thus have
\begin{equation}
\label{T-inverse-determinant}
\begin{aligned}
\det D\tilde{T}^{-1}(\tilde{w}) &= 1- \<\tilde{T}^{-1}\tilde{w},\newparallel_1^{-1} \nabla V(y)- \nabla V(x_0)\>+O(\varepsilon^2 \delta^2)\\
&= 1 - \< \tilde{w}, \nabla V(y)-\newparallel_1 \nabla V(x_0)\> + O(\delta^2 \varepsilon^2).
\end{aligned}
\end{equation}
\end{remark}

The exponential map is locally a diffeomorphism and $B_\varepsilon(x)=\exp_x (\tilde{B}_\varepsilon(x))$. This allows us to define the following diffeomorphism on the manifold, which we shall employ as our \textit{approximate} transport map between $\nu_{x_0}$ and $\nu_y$.

\begin{definition}
Define the map $T: B_\varepsilon(x_0) \rightarrow B_\varepsilon(y)$ as
\begin{equation}
  \label{4}
Tz := \exp_y (\tilde{T} \exp_{x_0}^{-1}(z)).
\end{equation}
\end{definition}

\begin{remark}
\label{transport-plan-explanation}
We explain the relationship between the map $T$ and the distance estimates of Section \ref{section:manifold-distance-estimates}. In the notation introduced in the context of geodesic variations $c_1, c_2$, if for any unit vector $w \in T_{x_0}M$ we label $z := \exp_{x_0}(\varepsilon w)$ then 
$$
Tz = \exp_y(\varepsilon \tilde{T} w) = \exp_y( \varepsilon \newparallel_1 w'),
$$
where $w'$ is given by (\ref{w-dash}). In the notation for geodesics of the variation (\ref{eta-theta-def}) we have $z=\eta(\varepsilon)$ and $Tz=\theta(\varepsilon)$, and thus in particular
$$
\begin{aligned}
d(z,Tz) &= d(\eta(\varepsilon), \theta(\varepsilon)) = L_1(\varepsilon, \delta),\\
d(z,p(z)) &= d(p(\eta(\varepsilon)), \eta(\varepsilon)) = L_2(\varepsilon, 0),\\
d(Tz, p(Tz)) &= d(p(\theta(\varepsilon)), \theta(\varepsilon)) = L_2(\varepsilon, \delta),
\end{aligned}
$$
where $p: M \rightarrow E$ is the projection map (\ref{projection}).
\end{remark}

While $T$ is not an exact transport map from $\nu_{x_0}^\varepsilon$ to $\nu_y^\varepsilon$, it turns out that the relevant Wasserstein distances may still be approximated using the pushforward measure $T_* \nu_{x_0}^\varepsilon$ in place of $\nu_y^\varepsilon$, and the distance $W_1(\nu_{x_0}^\varepsilon,T_*\nu_{x_0}^\varepsilon)$ can in turn be approximated by $W_1(\bar{\nu}_{x_0}^\varepsilon, T_*\bar{\nu}_{x_0}^\varepsilon)$. This is formalized below in \cref{wasserstein-approximations} as the proof requires preliminaries to follow. The merit of such approximation is that $W_1(\bar{\nu}_{x_0}^\varepsilon, T_*\bar{\nu}_{x_0}^\varepsilon)$ can be computed using the distance estimates obtained in the previous section. The upper bound for $W_1(\bar{\nu}_{x_0}^\varepsilon, T_*\bar{\nu}_{x_0}^\varepsilon)$ is established from the concrete transport plan $T$ and the lower bound is computed by using a concrete 1-Lipschitz function.

\begin{notation}
Denote $z := \exp_{x_0}(\varepsilon w)$ so that $Tz = \exp_y (\varepsilon \newparallel_1 w')$ where $w'$ is prescribed by (\ref{w-dash}), and let $r_0$ be the uniform injectivity radius on a fixed, compact neighbourhood of $x_0$. Recall that in the context of Proposition \ref{prop:variation-2} we defined the submanifold
$$
E := \{\exp_{x_0}(w): w \in T_{x_0}M, \la w, v \ra = 0\} \subset M,
$$
the projection $p: M \rightarrow E$, $p(x) := \mathrm{argmin}_{z \in E} d(x,z)$, $\nu \in \Gamma(TE^\perp)$ a unit normal vector field with $\nu(x_0)=v$ and the signed distance to projection, $f:B_{r_0/3}(x_0) \rightarrow \mathbb{R}$,
\begin{equation}
\label{distance-to-projection}
f(z) := \sgn (\langle \exp_{p(z)}^{-1} (z), \nu (z) \rangle) d(z,p(z)).
\end{equation}
Note that $f(Tz)=\tilde{L}_2(\varepsilon, \delta)$ and $f(z)=\tilde{L}_2(\varepsilon, 0)$ where $\tilde{L}_2$ was defined by (\ref{L2-tilde}), the signed length of the geodesic realizing the distance to projection.
\end{notation}

For notational convenience we label $z=\exp_{x_0}w$ for any $w \in T_{x_0}M$.

\begin{lemma}
\label{prop:lipschitz-function}
The signed distance to projection is 1-Lipschitz and for any vectors $v,w \in T_{x_0}M$ with $\|v\|=1, \|w\| \leq 1$ and $\delta, \varepsilon > 0$ sufficiently small,
\begin{equation}
\label{eq:14}
\begin{aligned}
f(Tz) - f(z) &= \delta\left(
1-\f{\varepsilon^2}{2}K_{x_0}(v,w)(\|w\|^2-\<v,w\>^2) -\f{\varepsilon^2}{2}\Hess_{x_0} V(v,v)(1-\|w\|^2)\right)\\
& \qquad + O(\delta^2 \varepsilon^2) + O(\delta \varepsilon^3),
\end{aligned}
\end{equation}
where $O(\delta^2\varepsilon^2)+O(\delta \varepsilon^3)$ is uniformly bounded in $z$.
\end{lemma}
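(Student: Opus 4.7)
The claim splits naturally into two parts: the 1-Lipschitz property of $f$, and the asymptotic expansion. I would handle them in turn.

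For the 1-Lipschitz property, I would argue directly from the definition of $f$ and the triangle inequality on the manifold. Since $E$ is a smooth embedded hypersurface and $f$ is its signed distance in the tubular neighbourhood supplied by \cref{signed-distance}, I would split on cases: if $z_1, z_2$ lie on the same side of $E$ (i.e.\ $f(z_1)f(z_2) \geq 0$) then $|f(z_1) - f(z_2)| = |d(z_1,E) - d(z_2,E)| \leq d(z_1,z_2)$ by the standard triangle inequality for the unsigned distance to a set, applied with $p(z_1), p(z_2) \in E$ as competitors. If they lie on opposite sides, the minimizing geodesic joining $z_1$ to $z_2$ must cross $E$ at some point $q$, so $f(z_1) + f(z_2) \,=\, d(z_1,E) + d(z_2,E) \,\leq\, d(z_1,q) + d(q,z_2) = d(z_1,z_2)$, giving $|f(z_1)-f(z_2)| = d(z_1,E) + d(z_2,E) \leq d(z_1,z_2)$.

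For the expansion, essentially all the work is done by \cref{difference-signed-lengths}. The plan is simply to recognise the identifications already indicated in the notation preceding the statement: with $z = \exp_{x_0}(\varepsilon w)$ and $Tz = \exp_y(\varepsilon \newparallel_1 w')$, one has $Tz = \theta(\varepsilon)$ and $z = \eta(\varepsilon)$ in the geodesics of \cref{eta-theta-def}. Since $c_2(s,0) = p(\theta(s))$ realises the projection, the signed distance to projection from $Tz$ is exactly the length of the geodesic $t \mapsto c_2(\varepsilon,t)$ equipped with the sign that makes it smooth across $E$, which is $\tilde{L}_2(\varepsilon,\delta)$. Setting $\delta = 0$ gives $y = x_0$ and the same variation applied to $z$ yields $f(z) = \tilde{L}_2(\varepsilon,0)$.

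Having made these identifications, the expansion \cref{eq:14} follows verbatim from \cref{difference-signed-lengths}, and the uniform boundedness of the remainder terms in $z$ (equivalently in $w$ with $\|w\|\leq 1$) is inherited from the uniform-in-$w$ statement of that corollary. The only mild subtlety, and hence the one step warranting explicit mention in a write-up, is checking that $f(z)$ really equals the specific $\tilde{L}_2(\varepsilon,0)$ rather than some other signed length (one must verify the sign convention in \cref{L2-tilde} matches the sign convention in \cref{distance-to-projection}); this amounts to observing that on $E$ itself the two signed-distance functions vanish and that both are smooth functions with gradient $\nu$ at points of $E$, so they coincide on the tubular neighbourhood.
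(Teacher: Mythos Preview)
Your proposal is correct and follows essentially the same approach as the paper: the 1-Lipschitz argument is the same two-case split (same side via the triangle inequality for the distance to a set, opposite sides via a crossing point of the minimizing geodesic with $E$), and the expansion is obtained exactly by identifying $f(Tz)-f(z)=\tilde{L}_2(\varepsilon,\delta)-\tilde{L}_2(\varepsilon,0)$ and invoking \cref{difference-signed-lengths}. The only detail the paper makes explicit that you gloss over is that the minimizing geodesic between $z_1$ and $z_2$ actually stays inside the tubular neighbourhood (so that the crossing-$E$ argument is valid), which it handles by restricting to $B_{r_0/3}(x_0)$ and bounding $d(\xi(t),x_0)$.
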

\begin{proof}
We first show that $f$ is 1-Lipschitz. Since $z\mapsto \langle \exp_{p(z)}^{-1} (z), \nu (z) \rangle$ is continuous, $E \cap B_{r_0/3}(x_0)$ is the boundary between the two connected components 
\begin{equation}
\label{connected-components}
\{z \in B_{r_0/3}(x_0) : \langle \exp_{p(z)}^{-1} (z), \nu (z) \rangle > 0\}, \quad 
\{z \in B_{r_0/3}(x_0) : \langle \exp_{p(z)}^{-1} (z), \nu (z) \rangle < 0\}.
\end{equation}
First, consider $z_1, z_2 \in B_{r_0/3}(x_0)$ such that $\la \exp_{x_0}^{-1}(z_1),v\ra \la \exp_{x_0}^{-1}(z_2),v\ra \geq 0$. This means that $z_1, z_2$ lie in the same connected component of $B_{r_0/3}(x_0)$. Then by the triangle inequality,
$$
|f(z_1) - f(z_2)| = |d(z_1, p(z_1)) - d(z_2, p(z_2))| \leq d(z_1,z_2).
$$
Second, consider $z_1, z_2 \in B_{r_0/3}(x_0)$ 
such that 
$$
\la \exp_{x_0}(z_1),v\ra \la
\exp_{x_0}(z_2),v\ra < 0,
$$
meaning that $z_1, z_2$ lie in distinct components of $B_{r_0/3}(x_0)$. Suppose $\xi: [0,1] \rightarrow M$ is a length-minimizing geodesic connecting $z_1$ and $z_2$. Then 
\begin{equation}
\label{geodesic-containment}
\begin{aligned}
d(\xi(t),x_0) &\leq d(\xi(t), z_1) + d(z_1, x_0) \leq d(z_2, z_1) + d(z_1, x_0)\\
&\leq d(z_2, x_0) + 2 d(z_1, x_0) < \frac{1}{3} r_0 + \frac{2}{3}
r_0 = r_0.
\end{aligned}
\end{equation}
This means that the geodesic realising the distance between $z_1$ and $z_2$ lies in $B_{r_0}(x_0)$, and hence must pass through $E \cap B_{r_0}(x_0)$ because it is the boundary between the two connected components (\ref{connected-components}). Therefore, there exists $z_0 \in E \cap B_{r_0}(x_0)$ such that
$$
|f(z_1) - f(z_2)| = d(z_1, p(z_1)) + d(z_2,p(z_2))  
\leq d(z_1, z_0) + d(z_2, z_0) = d(z_1, z_2).
$$
To make the geodesic containment argument in \eqref{geodesic-containment}, we thus restrict $f$ to $B_{r_0 / 3}(x_0)$.
Finally, by \cref{difference-signed-lengths},
$$
\begin{aligned}
f(Tz)- f(z)
&= \tilde{L}_2(\varepsilon,\delta)-\tilde{L}_2(\varepsilon,0)\\
&= \delta\left( 1-\f{\varepsilon^2}{2}\left[K_{x_0}(v,w)(\|w\|^2-\<v,w\>^2)+ \Hess_{x_0}V(v,v)(1-\|w\|^2)\right]\right)\\
& \qquad + O(\delta^2 \varepsilon^2) +
O(\delta \varepsilon^3).
\end{aligned}
$$
\end{proof}

The signed distance to projection leads to the correct lower bound for $W_1(\bar{\nu}^\varepsilon_{x_0}, T_*\bar{\nu}^\varepsilon_{x_0})$ up to fourth order terms jointly in $\delta$ and $\varepsilon$:
\begin{lemma}
\label{lipschitz-function-wasserstein}
For $f$ defined by (\ref{distance-to-projection}) and all $v \in T_{x_0}M$ unit vectors and $\delta, \varepsilon > 0$ small enough,\begin{equation}
\label{eq:11}
\begin{aligned}
\int_{M} (f(Tz) - f(z))d\bar{\nu}_{x_0}^\varepsilon(z) &=
\delta\left(1-\f{\varepsilon^2}{2(n+2)}(\Ric_{x_0}(v,v)+2 \Hess_{x_0}V(v,v))\right) \\
& \qquad + O(\delta^2 \varepsilon^2) + O(\delta \varepsilon^3).
\end{aligned}
\end{equation}
\end{lemma}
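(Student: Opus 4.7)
The plan is to integrate the pointwise expansion of $f(Tz) - f(z)$ supplied by \cref{prop:lipschitz-function} against the probability measure $\bar\nu_{x_0}^\varepsilon$. Parametrising $z = \exp_{x_0}(\varepsilon w)$ with $w \in \tilde B_1 \subset T_{x_0}M$, the pointwise formula holds uniformly in $w$; since $\bar\nu_{x_0}^\varepsilon(M) = 1$, integrating the uniform remainder contributes $O(\delta^2 \varepsilon^2) + O(\delta \varepsilon^3)$ to the answer right away. It then suffices to compute the two averages of the bounded quantities
$$
G(w) := K_{x_0}(v,w)\bigl(\|w\|^2 - \langle v,w\rangle^2\bigr), \qquad H(w) := 1 - \|w\|^2
$$
against $\bar\nu_{x_0}^\varepsilon$.

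To evaluate these averages I would push both integrals to the tangent space using $\bar\nu_{x_0}^\varepsilon = (\exp_{x_0})_*\tilde\nu_{x_0}^\varepsilon$, rescale $w' = \varepsilon u$ so that $u$ ranges over $\tilde B_1$, and compare $\bar\nu_{x_0}^\varepsilon$ against $\bar\mu_{x_0}^\varepsilon$ via \cref{lemma:density-mu-nu}. The density becomes $1 - \varepsilon \langle \nabla V(x_0), u \rangle + O(\varepsilon^2)$, uniformly in $u \in \tilde B_1$. The crucial symmetry observation is that both $G$ and $H$ are even functions of $u$ whereas $\langle \nabla V(x_0), u\rangle$ is odd, so its contribution integrates to zero on the symmetric domain $\tilde B_1$. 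The residual $O(\varepsilon^2)$ correction in the density, after being multiplied by the overall prefactor $\delta\varepsilon^2/2$ in the expansion, is absorbed into $O(\delta \varepsilon^4) \subset O(\delta \varepsilon^3)$.

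The two remaining averages are $\dashint_{\tilde B_1} G(u)\, du = \Ric_{x_0}(v,v)/(n+2)$ by \cref{lemma:ricci-average-ball} and the elementary identity $\dashint_{\tilde B_1}(1 - \|u\|^2)\, du = 2/(n+2)$. Combining these with the prefactor $-\delta \varepsilon^2/2$ and the constant term $\delta$ produces the claimed $\varepsilon^2$ correction $\tfrac{1}{n+2}\bigl(\Ric_{x_0}(v,v) + 2\Hess_{x_0}V(v,v)\bigr)$ and assembles the full expansion. The main bookkeeping obstacle is to verify that the non-uniformity of $\bar\nu_{x_0}^\varepsilon$ does not leak a stray $O(\delta \varepsilon)$ term from the $\nabla V$ factor in the density; this is handled precisely by the evenness/oddness argument above, and all higher-order contributions are absorbed because the relevant $\varepsilon$-orders in the density and in the pointwise expansion line up correctly.
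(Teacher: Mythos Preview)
Your proposal is correct and follows essentially the same line as the paper: integrate the pointwise expansion from \cref{prop:lipschitz-function}, pass to the tangent space, replace $\bar\nu_{x_0}^\varepsilon$ by $\bar\mu_{x_0}^\varepsilon$ via the density of \cref{lemma:density-mu-nu}, and evaluate the resulting averages using \cref{lemma:ricci-average-ball} together with the spherical computation $\dashint_{\tilde B_1}(1-\|u\|^2)\,du = 2/(n+2)$. If anything, your parity argument for killing the $\langle\nabla V(x_0),u\rangle$ cross terms is more explicit than the paper's own proof, which passes directly to $d\tilde\mu_{x_0}^\varepsilon$ and relies on the analogous cancellation spelled out later in the proof of \cref{wasserstein-pushforwards}.
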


\begin{proof}
Change the variable of integration to $w = \exp_{x_0}^{-1}(z)$ and recall $\bar{\mu}_{x_0}^\varepsilon := (\exp_{x_0})_* \tilde{\mu}_{x_0}^\varepsilon$. Then the integral on the left can be written using the density expression of \cref{lemma:density-mu-nu} and the estimate of \cref{prop:lipschitz-function} as
$$
\begin{aligned}
& \int_{M} (f(Tz) - f(z))\f{d\bar{\nu}_{x_0}^\varepsilon}{d\bar{\mu}_{x_0}^\varepsilon} (z) d\bar{\mu}_{x_0}^\varepsilon(z) \\
&= \int_M (f(Tz) - f(z)) (1-\<\nabla V(x), \exp^{-1}_x(z) \> + h(z,x_0)) d\bar{\mu}_{x_0}^\varepsilon(z) \\
&= \int_{T_{x_0}M}\delta \left( 1-\f{1}{2}K_{x_0}(v,w)(\|w\|^2-\<v,w\>^2)
-\f{1}{2} \Hess_{x_0}V(v,v) (\varepsilon^2-\|w\|^2)\right) \\
& \qquad \qquad \times \left(1-\<\nabla V(x_0), w \> + h(\exp_{x_0}w,x_0)\right) d\tilde{\mu}_{x_0}^\varepsilon(w) + O(\delta^2 \varepsilon^2) + O(\delta \varepsilon^3) \\
&= \int_{T_{x_0}M}\delta \left( 1-\f{1}{2}K_{x_0}(v,w)(\|w\|^2-\<v,w\>^2) -\f{1}{2} \Hess_{x_0}V(v,v) (\varepsilon^2-\|w\|^2)\right) d \tilde{\mu}_{x_0}^\varepsilon(w)\\
& \hspace{2cm} + O(\delta^2 \varepsilon^2) + O(\delta \varepsilon^3).
\end{aligned}
$$
On the last line, the linear term $\<\nabla V(x_0), w\>$ vanished by symmetry of the integration domain, and also we applied the mean zero property $\int_{T_{x_0}M} h(\exp_{x_0}w, x_0) d\tilde{\mu}_{x_0}^\varepsilon(w) = 0$. The products of these terms with the sectional curvature and Hessian terms were absorbed into the remainder $O(\delta \varepsilon^3)$.
Integrating the Hessian term in spherical coordinates, we have
$$
\begin{aligned}
\f{1}{2}\int_{T_{x_0}M} (\varepsilon^2-\|w\|^2)
d\tilde{\mu}_{x_0}^\varepsilon(w) &=
\f{\varepsilon^2}{2}-\f{1}{2|B_\varepsilon(x_0)|}\int_0^\varepsilon r^2 |\partial
B_r(x_0)|dr\\
&= \f{\varepsilon^2}{2} -\f{n\varepsilon^2}{2(n+2)} = \f{\varepsilon^2}{n+2}.
\end{aligned}
$$
Moreover, the sectional curvature term integrates to give the Ricci term as per \cref{lemma:ricci-average-ball}.
\end{proof}

\begin{theorem}
\label{wasserstein-pushforwards}
For any point $x_0 \in M$, vector $v \in T_{x_0}M$ with $\|v\| =1$ and $\delta, \varepsilon > 0$ sufficiently small,
\begin{equation}
W_1(\bar{\nu}_{x_0}^\varepsilon, T_*\bar{\nu}_{x_0}^\varepsilon) = \delta\left(1-\f{\varepsilon^2}{2(n+2)}
\left(\Ric_{x_0}(v,v)+2\Hess_{x_0}V(v,v)\right)\right) 
+ O(\delta^2 \varepsilon^2) + O(\delta \varepsilon^3).
\end{equation}
\end{theorem}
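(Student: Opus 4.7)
The plan is to sandwich $W_1(\bar{\nu}_{x_0}^\varepsilon, T_*\bar{\nu}_{x_0}^\varepsilon)$ between matching upper and lower bounds, both extracted from the geometric machinery assembled in the preceding sections. For the lower bound, I would apply the Kantorovich--Rubinstein duality with the signed distance to projection $f$, which is 1-Lipschitz by \cref{prop:lipschitz-function}:
\[
W_1(\bar{\nu}_{x_0}^\varepsilon, T_*\bar{\nu}_{x_0}^\varepsilon) \geq \int_M f \, d(T_*\bar{\nu}_{x_0}^\varepsilon) - \int_M f \, d\bar{\nu}_{x_0}^\varepsilon = \int_{B_\varepsilon(x_0)} (f(Tz) - f(z))\, d\bar{\nu}_{x_0}^\varepsilon(z).
\]
By \cref{lipschitz-function-wasserstein}, this already equals the claimed right-hand side up to the stated error, so the lower bound is essentially on the nose.

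For the upper bound, the map $T$ supplies a coupling through $z \mapsto (z, Tz)$, yielding
\[
W_1(\bar{\nu}_{x_0}^\varepsilon, T_*\bar{\nu}_{x_0}^\varepsilon) \leq \int_{B_\varepsilon(x_0)} d(z, Tz)\, d\bar{\nu}_{x_0}^\varepsilon(z).
\]
The first key observation is that by \cref{transport-plan-explanation}, the pointwise cost is exactly $d(z, Tz) = L_1(\varepsilon, \delta)$. The second is that by \cref{difference-signed-lengths},
\[
L_1(\varepsilon, \delta) = \bigl(\tilde{L}_2(\varepsilon,\delta) - \tilde{L}_2(\varepsilon,0)\bigr) + O(\delta^2\varepsilon^2) + O(\delta\varepsilon^3) = \bigl(f(Tz) - f(z)\bigr) + O(\delta^2\varepsilon^2) + O(\delta\varepsilon^3),
\]
with error uniform in $z \in B_\varepsilon(x_0)$. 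Integrating against $\bar{\nu}_{x_0}^\varepsilon$ and invoking \cref{lipschitz-function-wasserstein} a second time collapses the upper bound to the same expansion as the lower bound, whence the two inequalities squeeze $W_1(\bar{\nu}_{x_0}^\varepsilon, T_*\bar{\nu}_{x_0}^\varepsilon)$ to the claimed value.

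The main obstacle would have been obtaining genuinely matching upper and lower bounds, but this has in fact already been overcome: the role of \cref{difference-signed-lengths} is precisely to reconcile the pointwise transport cost $L_1$ with the variation $f(Tz) - f(z)$ of the 1-Lipschitz test function, up to the target error order. The uniformity of the remainders in $w$, asserted in the final lines of \cref{variation-1} and \cref{prop:variation-2}, guarantees that the errors survive integration against $\bar{\nu}_{x_0}^\varepsilon$. Kantorovich--Rubinstein duality applies without trouble since both measures have compact support; beyond that, the proof is merely bookkeeping of leading-order terms already identified in the previous section.
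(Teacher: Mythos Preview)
Your proof is correct and follows the same two-sided squeeze as the paper: the lower bound via Kantorovich--Rubinstein with the signed projection distance $f$, and the upper bound via the transport map $T$. The only difference is cosmetic: for the upper bound the paper expands $d(z,Tz)=L_1(\varepsilon,\delta)$ directly from \cref{variation-1}, writes out the density $d\bar{\nu}_{x_0}^\varepsilon/d\bar{\mu}_{x_0}^\varepsilon$ from \cref{lemma:density-mu-nu}, and integrates termwise using \cref{lemma:ricci-average-ball} and the mean-zero properties of $h$ and $\langle\nabla V(x_0),w\rangle$, whereas you short-circuit this by invoking \cref{difference-signed-lengths} to identify $L_1$ with $f(Tz)-f(z)$ up to admissible error and then reuse \cref{lipschitz-function-wasserstein} --- a tidy economy, since that lemma already absorbed exactly those density cancellations.
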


\begin{proof}
Using the transport map (\ref{eq:transport}) and applying \cref{variation-1} and \cref{lemma:density-mu-nu}, we have the upper bound
\begin{align}
\label{T-approximates-wasserstein}
&W_1(\bar{\nu}_{x_0}^\varepsilon,T_*\bar{\nu}_{x_0}^\varepsilon) \leq \int_{M}
d(Tz,z)d\bar{\nu}^\varepsilon_{x_0}(z) \nonumber\\
&=\int_{T_{x_0}M} \delta \left(1 -
\f{1}{2}K_{x_0}(v,w)(\|w\|^2-\<v, w\>^2)
-\f{1}{2}\Hess_{x_0}V(v,v)(\varepsilon^2-\|w\|^2) \right) \nonumber\\
& \qquad \times (1-\<\nabla V(x_0), w\> +h(\exp_{x_0}(w),x_0))
d\tilde{\mu}_{x_0}^\varepsilon(w) + O(\delta \varepsilon^3)+O(\delta^2 \varepsilon^2) \nonumber \\
&=\delta\left(1-\f{\varepsilon^2}{2(n+2)}
\left(\Ric_{x_0}(v,v)+2\Hess_{x_0}V(v,v)\right)\right)+O(\delta \varepsilon^3) + O(\delta^2 \varepsilon^2),
\end{align}
where we eliminated the $h$ and $\nabla V$ terms since
$$\int \<\nabla V(x_0), w\>
d\tilde{\mu}_{x_0}^\varepsilon(w)=0, \quad
\int h(\exp_{x_0}(w),x_0)d\tilde{\mu}_{x_0}^\varepsilon(w)=0.
$$
The Ricci curvature on the right appears by \cref{lemma:ricci-average-ball} and the Hessian term is obtained as in \cref{lipschitz-function-wasserstein} by integration in spherical coordinates.

For the converse direction, we use the Kantorovich-Rubinstein duality, choosing the 1-Lipschitz test function $f$ defined by (\ref{distance-to-projection}), so that by \cref{lipschitz-function-wasserstein} we have the lower bound 
$$
\begin{aligned}
W_1(\bar{\nu}_{x_0}^\varepsilon,T_*\bar{\nu}_{x_0}^\varepsilon) &\geq 
\int_{M} f(z) (dT_*\bar{\nu}_{x_0}^\varepsilon(z)-d\bar{\nu}_{x_0}^\varepsilon(z)) \\
&=\int_{B_\varepsilon(x_0)} (f(Tz)-f(z))d\bar{\nu}_{x_0}^\varepsilon(z)\\
&=\delta\left(1-\f{\varepsilon^2}{2(n+2)}(\Ric_{x_0}(v,v)+2 \Hess_{x_0}V(v,v))\right) \\
& \qquad + O(\delta^2 \varepsilon^2) + O(\delta \varepsilon^3).
\end{aligned}
$$
This shows the upper and lower bound agree up to terms $O(\delta \varepsilon^3)+O(\delta^2 \varepsilon^2)$ and their values are as required.
\end{proof}

We now prove the aforementioned approximation property and refer to \cref{notation:3} for the measures involved.
\begin{proposition}
\label{wasserstein-approximations}
For $x_0 \in M$ and $\delta, \varepsilon > 0$ sufficiently small,
$$
\begin{aligned}
W_1(\nu_{x_0}^\varepsilon,\nu_y^\varepsilon) &= 
W_1(\nu_{x_0}^\varepsilon, T_*\nu_{x_0}^\varepsilon)+O(\delta \varepsilon^3) + O(\delta^2 \varepsilon^2), \\
W_1(\nu_{x_0}^\varepsilon, T_*\nu_{x_0}^\varepsilon)&=W_1(\bar{\nu}_{x_0}^\varepsilon, T_*\bar{\nu}_{x_0}^\varepsilon)+
O(\delta \varepsilon^3) + O(\delta^2 \varepsilon^2),
\end{aligned}
$$
where $y= \exp_{x_0}(\delta v)$.
\end{proposition}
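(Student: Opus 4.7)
The plan is to prove both equalities by combining the triangle inequality for $W_1$ with Kantorovich--Rubinstein duality, controlling the cross terms via the density computations of \cref{lemma:density}, \cref{lemma:density-2}, and \cref{lemma:density-mu-nu}. Each equality is handled by matching upper and lower bounds.

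For the second equality, I would obtain matching upper and lower bounds for $W_1(\nu_{x_0}^\varepsilon, T_*\nu_{x_0}^\varepsilon) - W_1(\bar\nu_{x_0}^\varepsilon, T_*\bar\nu_{x_0}^\varepsilon)$. The upper bound comes from using $T$ as a transport plan, so $W_1(\nu_{x_0}^\varepsilon, T_*\nu_{x_0}^\varepsilon) \leq \int d(z,Tz)\,d\nu_{x_0}^\varepsilon(z)$; comparing to the analogous integral against $\bar\nu_{x_0}^\varepsilon$ (which controls $W_1(\bar\nu_{x_0}^\varepsilon, T_*\bar\nu_{x_0}^\varepsilon)$ by \cref{wasserstein-pushforwards}) and invoking \cref{lemma:density-2} converts the difference into
$$-\int d(z,Tz)\,h(z,x_0)\,d\nu_{x_0}^\varepsilon(z).$$
\cref{variation-1} gives $d(z,Tz) = L_1(\varepsilon,\delta) = \delta + O(\delta\varepsilon^2) + O(\delta^2\varepsilon^2)$, and the identity $d\bar\nu = (1+h)\,d\nu$ together with $\int h\,d\bar\nu_{x_0}^\varepsilon = 0$ forces $\int h\,d\nu_{x_0}^\varepsilon = -\int h^2\,d\nu_{x_0}^\varepsilon = O(\varepsilon^4)$. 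Splitting $d(z,Tz) = \delta + (d(z,Tz)-\delta)$ and using pointwise $h = O(\varepsilon^2)$, the cross integral is bounded by $\delta\cdot O(\varepsilon^4) + O(\delta\varepsilon^2)\cdot O(\varepsilon^2) = O(\delta\varepsilon^4)$. For the lower bound, Kantorovich--Rubinstein duality with the signed-distance test function $f$ from \eqref{distance-to-projection} gives $W_1(\nu_{x_0}^\varepsilon, T_*\nu_{x_0}^\varepsilon) \geq \int (f(Tz) - f(z))\,d\nu_{x_0}^\varepsilon$, and the same cancellation applies to $f(Tz) - f(z) = \tilde L_2(\varepsilon,\delta) - \tilde L_2(\varepsilon,0) = \delta + O(\delta\varepsilon^2)$ by \cref{difference-signed-lengths}, again yielding an $O(\delta\varepsilon^4)$ error. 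Both bounds combine with \cref{wasserstein-pushforwards} to give the equality.

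For the first equality, the triangle inequality reduces matters to $W_1(\nu_y^\varepsilon, T_*\nu_{x_0}^\varepsilon) = O(\delta\varepsilon^3) + O(\delta^2\varepsilon^2)$. Both measures sit on $B_\varepsilon(y)$, so I would transfer via $\exp_y^{-1}$ to $T_yM$ and compute the density of $\tilde T_*\tilde\nu_{x_0}^\varepsilon$ against $\tilde\mu_y^\varepsilon$ using the Jacobian \eqref{jacobian-formula}, the inverse approximation \eqref{T-inverse-approximation}, and the density expansion of \cref{lemma:density-mu-nu}. The design of $T$ in \eqref{eq:transport} is precisely to cancel the leading-order density drift between $\nu_{x_0}^\varepsilon$ and $\nu_y^\varepsilon$, so both densities should expand as $1 - \langle \nabla V(y), w\rangle + O(\varepsilon^2)$ pointwise in $w = \exp_y^{-1}(z)$, with the residual discrepancy consisting of an $O(\delta\varepsilon)$ term linear in $w$ together with a pointwise $O(\delta^2\varepsilon^2)$ remainder. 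Integrating a 1-Lipschitz test function against this discrepancy and using the vanishing first moment of $\tilde\mu_y^\varepsilon$ as in \eqref{eq:10}, combined with the $O(\varepsilon)$ oscillation of any 1-Lipschitz function over $B_\varepsilon(y)$, delivers the bound $O(\delta\varepsilon^3) + O(\delta^2\varepsilon^3)$, well within the claim.

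The main technical obstacle is the density bookkeeping in the first equality: one must track several Jacobian factors coming from $T$, $\exp_y^{-1}$, and the potential $V$, and carefully verify that the product expansion leaves only the residual discrepancies that can be absorbed into the claimed error, relying on the first-order cancellation built into the definition of $\tilde T$. The second equality is comparatively direct because \cref{lemma:density-2} already provides a mean-zero density difference $h$, and the near-constant structure of both $d(z,Tz)$ and $f(Tz) - f(z)$ -- each equal to $\delta$ plus an $O(\delta\varepsilon^2)$ perturbation -- supplies the needed cancellation immediately.
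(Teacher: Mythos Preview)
Your treatment of the second equality is essentially the paper's argument: both use $T$ as a transport map for the upper bound and the signed projection distance $f$ for the lower bound, and both exploit that $d(z,Tz)$ and $f(Tz)-f(z)$ equal $\delta + O(\delta\varepsilon^2)$ together with the mean-zero property of $h$ from \cref{lemma:density-2} to kill the cross term.

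For the first equality, your overall plan --- reduce via the triangle inequality to bounding $W_1(\nu_y^\varepsilon, T_*\nu_{x_0}^\varepsilon)$, then compare densities on $B_\varepsilon(y)$ --- is exactly the paper's, but there is a genuine gap in your order-counting. You describe the residual density discrepancy as ``an $O(\delta\varepsilon)$ term linear in $w$ together with a pointwise $O(\delta^2\varepsilon^2)$ remainder.'' With a linear term that is pointwise $O(\delta\varepsilon)$ (i.e.\ $\langle u,w\rangle$ with $|u|=O(\delta)$), the first-moment trick only yields
\[
\Bigl|\int (f(z)-f(y))\,\langle u,\exp_y^{-1}z\rangle\,d\bar\mu_y^\varepsilon\Bigr|
\le O(\varepsilon)\cdot O(\delta\varepsilon)=O(\delta\varepsilon^2),
\]
which is one order short of the claimed $O(\delta\varepsilon^3)$. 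In fact, after multiplying out the Jacobian \eqref{T-inverse-determinant} and the density from \cref{lemma:density-mu-nu}, the linear-in-$w$ parts cancel \emph{exactly} (both densities have the same $1-\langle\nabla V(y),w\rangle$ leading term), so there is no residual linear piece at all. What remains is the difference of two $O(\varepsilon^2)$ functions which coincide at $\delta=0$; the paper isolates this as a separate lemma (\cref{lemma:2} and the lemma following it) and uses the smooth-joint-order principle of \cref{smooth-joint-order} to conclude that the full discrepancy $\frac{dT_*\nu_{x_0}^\varepsilon}{d\nu_y^\varepsilon}-1$ is pointwise $O(\delta\varepsilon^2)$. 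With that pointwise bound in hand, Kantorovich--Rubinstein plus the $O(\varepsilon)$ oscillation of a $1$-Lipschitz function on $B_\varepsilon(y)$ gives $O(\delta\varepsilon^3)$ directly, with no appeal to vanishing first moments. Your sketch would go through once you replace the asserted structure by this sharper pointwise $O(\delta\varepsilon^2)$ density estimate.
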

We need density estimates of the following two lemmas for the proof.
\begin{lemma}
\label{lemma:2}
It holds that
$$
\f{d(T_*\bar{\nu}_{x_0}^\varepsilon)}{d\bar{\nu}_y^\varepsilon}(z) = \mathbbm{1}_{B_y(\varepsilon)}(z) (1+h'(z,y)),
$$
where $h': M \times M \rightarrow \R$ is smooth a. e. and such that $h'(\cdot, y) = O(\delta \varepsilon^2) + O(\varepsilon^3)$ and $\int_{B_\varepsilon(y)} h'(z,y) d\bar{\nu}_y^{\varepsilon}(z)=0$ for all $y \in M$.
\end{lemma}
\begin{proof}
We first show the density estimate
\begin{equation}
\label{eq:1111}
\f{d(T_* \bar{\nu}_{x_0}^\varepsilon)}{d\bar{\mu}_y^\varepsilon}(z) = (1 -\< \nabla V(y), \exp_y^{-1}(z)\> + h(T^{-1}z, x_0)) \one_{B_\varepsilon(y)}(z).
\end{equation}
For any bounded Borel measurable $f: M \rightarrow \R$,
$$
\begin{aligned}
\int_M f(z) d(T_* \bar{\nu}_{x_0}^\varepsilon)&(z) = \int_M f(Tz) d\bar{\nu}_{x_0}^\varepsilon(z) \\
&= \int_M f(Tz) \f{d\bar{\nu}_{x_0}^\varepsilon}{d\bar{\mu}_{x_0}^\varepsilon}(z) d\bar{\mu}_{x_0}^\varepsilon(z)\\
&= \int_M f(Tz) (1-\<\nabla V(x_0), \exp_{x_0}^{-1}(z)\> +h(z,x_0)) d\bar{\mu}_{x_0}^\varepsilon (z)\\
&= \int_{T_{x_0}M} f(\exp_y \tilde{T}w)(1-\< \nabla V(x_0), w\>+h(\exp_{x_0}(w),x_0)) d\tilde{\mu}_{x_0}^\varepsilon(w),
\end{aligned}
$$
having applied \cref{lemma:density-mu-nu} on the second line. Substituting $\tilde{w}:= \tilde{T} w$ and using the change of variable formula, this becomes
$$
\begin{aligned}
\int_{T_yM} & f(\exp_y \tilde{w}) \left(1-\< \nabla V(x_0), \tilde{T}^{-1}\tilde{w} \> + h(\exp_{x_0} (\tilde{T}^{-1} \tilde{w}), x_0)\right) \left| \det D_{\tilde{w}} \tilde{T}^{-1} \right| d\tilde{\mu}_y^\varepsilon(\tilde{w})\\
&= \int_M f(z) \left(1- \< \nabla V(x_0), \tilde{T}^{-1} \exp^{-1}_y(z)\>+h(T^{-1}z,x_0)\right) \\
& \hspace{1.5cm} \times \left(1- \< \nabla V(y)-\newparallel_1 \nabla V(x_0), \exp_y^{-1}(z)\>+O(\delta^2 \varepsilon^2)\right) d\bar{\mu}_y^\varepsilon(z),
\end{aligned}
$$
having applied the determinant formula (\ref{T-inverse-determinant}). Referring back to \cref{transport-map-parallel-approx}, we know that
$$
\tilde{T}^{-1}\exp_y^{-1}(z) = \; \newparallel_1^{-1} \exp_y^{-1}(z) + O(\delta \varepsilon^2).
$$
Absorbing the $O(\delta \varepsilon^2)$ term into $h$, the integral above simplifies to
$$
\int_M f(z) \left(1-\< \nabla V(y), \exp_y^{-1}(z) \> + h(T^{-1}z, x_0)\right) d\bar{\mu}_y^\varepsilon(z),
$$
as required to obtain the density (\ref{eq:1111}).

Finally, using \cref{lemma:density-mu-nu} and (\ref{eq:1111}) we obtain
\begin{equation}
\label{eq:20}
\begin{aligned}
&\f{dT_*\bar{\nu}_{x_0}^\varepsilon}{d\bar{\nu}_y^\varepsilon}(z)\\
&=\f{d\bar{\mu}_{y}^\varepsilon}{d\bar{\nu}_y^\varepsilon}(z) 
\f{dT_*\bar{\nu}_{x_0}^\varepsilon}{d\bar{\mu}_{y}^\varepsilon}(z) 
=\lc\f{d\bar{\nu}_y^\varepsilon}{d\bar{\mu}_{y}^\varepsilon}(z)\rc^{-1}
\f{dT_*\bar{\nu}_{x_0}^\varepsilon}{d\bar{\mu}_{y}^\varepsilon}(z) \\ 
&=\mathbbm{1}_{B_y(\varepsilon)}(z)(1+\<\nabla V(y), \exp_y^{-1}(z)\>-h(z,y) + \<\nabla V(y), \exp_y^{-1}(z)\>^2 + O(\varepsilon^3)) \\
& \hspace{1.6cm} \times (1-\<\nabla V(y), \exp_y^{-1}(z)\>+h(T^{-1}z,x_0)) \\
&=\mathbbm{1}_{B_y(\varepsilon)}(z) \bigg(h(T^{-1}z,x_0)-h(z,y) \\
&\hspace{2.5cm} + \< \nabla V(y), \exp_y^{-1}(z) \>(h(T^{-1}z, x_0) +h(z,y))+ O(\varepsilon^3)\bigg).
\end{aligned}
\end{equation}
The function $(y,z) \mapsto h(T^{-1}z, x_0)-h(z,y)$ is smooth a.e. on $M \times M$ and $z \mapsto h(T^{-1} z, x_0)-h(z,y)$ is of order $\varepsilon^2$ for every $y$ and vanishing for $y=x_0$ (i.e. $\delta=0$). Hence $h(T^{-1}z, x_0)-h(z,y)=O(\min(\delta, \varepsilon^2))$ and by smoothness of $h$ thus of order $\delta \varepsilon^2$, see \cref{smooth-joint-order}. Moreover, $\exp_y^{-1}(z) = O(\varepsilon)$ and $h(z,y)+h(T^{-1}z,x_0+h(z,y))=O(\varepsilon^2)$ imply that the term on the last line is $O(\varepsilon^3)$.
\end{proof}

\begin{remark}
The cancellation in (\ref{eq:20}) occurs because of the specific choice of $T$, and is essential for the $O(\delta \varepsilon^2)$ estimate.
\end{remark}

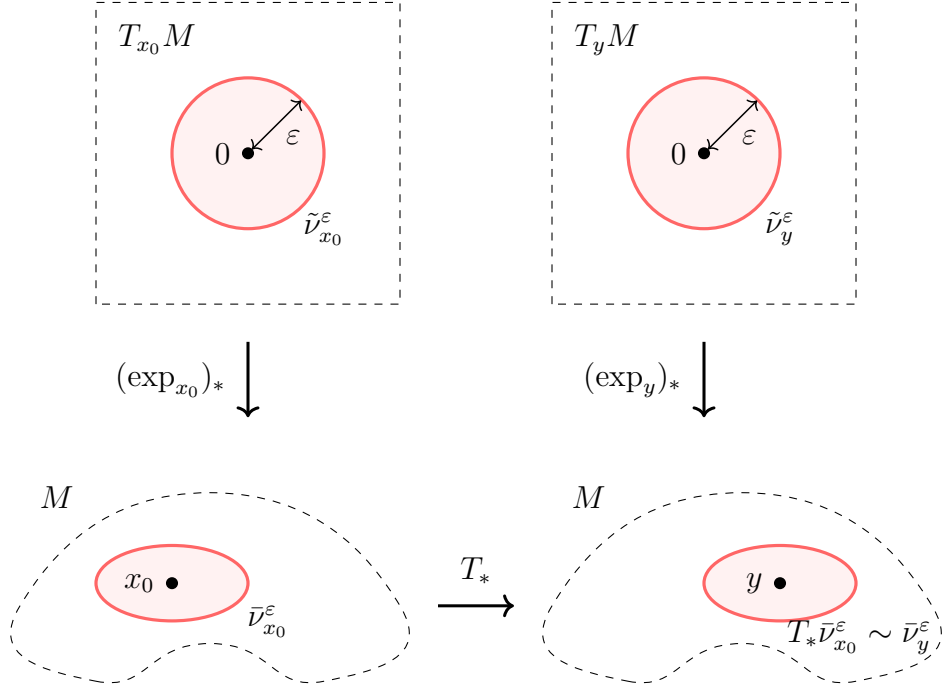
\begin{figure}
\centering
\begin{tikzpicture}
\draw[dashed] (0,0) rectangle (4,4);
\filldraw[color=red!60, fill=red!5, very thick] (2,2) circle (1);
\node[circle, fill=black, scale=0.4, label=left:{$0$}] at (2,2) {};
\node[label=below right:{$T_{x_0}M$}] at (0,4) {};
\draw [line width=0.4mm,->] (2,-0.5) -- (2,-1.5);
\node[label=left:{$(\exp_{x_0})_*$}] at (2,-1) {};
\node[label=center:{$\tilde{\nu}_{x_0}^\varepsilon$}] at (3,1) {};
\draw [line width=0.2mm,<->] (2.05,2.05) -- (2.7,2.7);
\node[label=below:{$\varepsilon$}] at (2.6,2.6) {};

\draw[dashed] (6,0) rectangle (10,4);
\filldraw[color=red!60, fill=red!5, very thick] (8,2) circle (1);
\node[circle, fill=black, scale=0.4, label=left:{$0$}] at (8,2) {};
\node[label=below right:{$T_{y}M$}] at (6,4) {};
\draw [line width=0.4mm,->] (8,-0.5) -- (8,-1.5);
\node[label=left:{$(\exp_{y})_*$}] at (8,-1) {};
\node[label=center:{$\tilde{\nu}_y^\varepsilon$}] at (9,1) {};
\draw [line width=0.2mm,<->] (8.05,2.05) -- (8.7,2.7);
\node[label=below:{$\varepsilon$}] at (8.6,2.6) {};

\node[label=above left:{$M$}] at (0,-3) {};
\draw [dashed] plot [smooth cycle, blue, tension=1] coordinates {
(-1,-4) (0,-5) (1.5, -4.5) (3, -5) (4,-4) (1.5,-2.5)};
\filldraw[color=red!60, fill=red!5, very thick] (1,-3.7) ellipse (1 and 0.5);
\node[circle, fill=black, scale=0.4, label=left:{$x_0$}] at (1,-3.7) {};
\node[label=below right:{$\bar{\nu}_{x_0}^\varepsilon$}] at (1.7,-3.7) {};

\draw [line width=0.4mm,->] (4.5,-4) -- (5.5,-4);
\node[label=above:{$T_*$}] at (5,-4) {};

\node[label=above left:{$M$}] at (7,-3) {};
\draw [dashed] plot [smooth cycle, blue, tension=1] coordinates {
(6,-4) (7,-5) (8.5, -4.5) (10, -5) (11,-4) (8.5,-2.5)};
\filldraw[color=red!60, fill=red!5, very thick] (9,-3.7) ellipse (1 and 0.5);
\node[circle, fill=black, scale=0.4, label=left:{$y$}] at (9,-3.7) {};
\node[label=below right:{$T_* \bar{\nu}_{x_0}^\varepsilon \sim \bar{\nu}_y^\varepsilon$}] at (8.8,-3.9) {};

\end{tikzpicture}
\caption{Illustration of the relationship between $T_* \bar{\nu}_{x_0}^\varepsilon$ and $\bar{\nu}_y^\varepsilon$. The two measures are equivalent with mutual density of the form $1+O(\delta \varepsilon^2)$.
}
\label{fig:4}
\end{figure}

The estimate from \cref{lemma:2} carries over to $\nu_{x_0}$ and $\nu_y$ as we shall prove below.
\begin{lemma}
It holds that
$$
\f{d(T_*\nu_{x_0}^\varepsilon)}{d\nu_y^\varepsilon}(z) = \mathbbm{1}_{B_y(\varepsilon)}(z) (1+h'(z,y)),
$$
where $h': M \times M \rightarrow \R$ is smooth \mbox{a.e.} and such that $h'(\cdot, y) = O(\delta \varepsilon^2)$ and 
$$
\int_{B_\varepsilon(y)} h'(z,y) d\bar{\nu}_y^{\varepsilon}(z)=0 \quad \forall y \in M.
$$
\end{lemma}
\begin{proof}
By \cref{lemma:density-2} we may write 
$$
d\nu^\varepsilon_{x_0}(z) = (1+h(x_0,z))d\bar{\nu}_{x_0}^\varepsilon(z), \quad d\nu^\varepsilon_{y}(z) = (1+h(y,z))d\bar{\nu}_{y}^\varepsilon(z)
$$
for a function $h: M \times M \rightarrow \R$ smooth \mbox{a. e.} and such that
$h(\cdot,x)=O(\varepsilon^2)$ and $\int h(z,x)d\bar{\mu}_x(z)=0$ for every $x \in M$.
Then
$$
\begin{aligned}
d(T_* \nu_{x_0})(z) &= T_*[(1+h(x_0,z))d\bar{\nu}_{x_0}^\varepsilon(z)] \\
&=(1+h(x_0,T^{-1}z))d(T_* \bar{\nu}_{x_0}^\varepsilon)(z) \\
&=(1+h(y,z)+O(\delta \varepsilon^2)) d(T_* \bar{\nu}_{x_0}^\varepsilon)(z)\\
&=(1+h(y,z)+O(\delta \varepsilon^2)) \f{d(T_* \bar{\nu}_{x_0}^\varepsilon)}{d\bar{\nu}_y^\varepsilon}(z) d\bar{\nu}_y^\varepsilon(z)\\
&=(1+h(y,z)+O(\delta \varepsilon^2)) (1+O(\delta \varepsilon^2)) d\bar{\nu}_y^\varepsilon(z)\\
&=(1+h(y,z)+O(\delta \varepsilon^2)) d\bar{\nu}_y^\varepsilon(z)\\
&=(1+O(\delta \varepsilon^2))d\nu_y^\varepsilon(z).
\end{aligned}
$$
On the third line we applied that $h(y,z)-h(x_0, T^{-1}z) = O(\delta \varepsilon^2)$ by the same argument as in the proof of \cref{lemma:2}. The mean zero property of $h'(\cdot,y)$ again follows from $1+h(\cdot,y)$ being a density with respect to a probability measure.
\end{proof}

\begin{proof}[Proof of \cref{wasserstein-approximations}]
For the first equality, $\forall f \in \mathrm{Lip}_1(M)$:
$$
\begin{aligned}
    \int f(z) (d(T_* \nu_{x_0}^\varepsilon)(z)-d\nu_{x_0}^\varepsilon(z)) &= \int f(z) \left(\f{d(T_* \nu_{x_0}^\varepsilon)}{d\nu_y^\varepsilon}(z)d\nu_y^\varepsilon(z)-d\nu_{x_0}^\varepsilon(z) \right)\\
    &= \int f(z) ((1+h'(y,z))d\nu_y^\varepsilon(z) - d\nu_{x_0}^\varepsilon(z)) \\
    &= \int f(z) (d\nu_y^\varepsilon(z)-d\nu_{x_0}^\varepsilon(z)) \\
    & \quad + \int (f(z)-f(y)) h'(y,z) d\nu_y^\varepsilon(z)
\end{aligned}
$$
and the last term is of order $\delta \varepsilon^3$ as $h'=O(\delta \varepsilon^2)$ by the preceding lemma.

For the second equality, we know from \cref{wasserstein-pushforwards} and the equality (\ref{T-approximates-wasserstein}) that $T$ satisfies
$$
\int d(z,Tz) d\bar{\nu}_{x_0}^\varepsilon(z) = W_1(\bar{\nu}_{x_0}^\varepsilon, T_*\bar{\nu}_{x_0}^\varepsilon) + O(\delta^2\varepsilon^2)+O(\delta \varepsilon^3).
$$
By \cref{lipschitz-function-wasserstein}, there exists $f \in \mathrm{Lip}_1(M)$ such that
\begin{equation}
\label{concrete-f}
\int f(z)(d(T_*\bar{\nu}_{x_0})(z) - d\bar{\nu}_{x_0}(z)) = W_1(\bar{\nu}_{x_0}^\varepsilon, T_*\bar{\nu}_{x_0}^\varepsilon) + O(\delta^2\varepsilon^2)+O(\delta \varepsilon^3).
\end{equation}
\cref{variation-1} gives $d(Tz,z)= d(y,x_0)(1+O(\varepsilon^2))$, hence we obtain the upper bound
$$
\begin{aligned}
W_1(\nu_{x_0}^\varepsilon, T_* \nu_{x_0}^\varepsilon) &\leq \int d(Tz,z) d\nu_{x_0}^\varepsilon(z) \\
&= \int d(Tz,z)(1+h(x_0,z))d\bar{\nu}_{x_0}^\varepsilon(z)\\
&= \int d(Tz,z) d\bar{\nu}_{x_0}^\varepsilon(z) + \int d(y,x_0)(1+O(\varepsilon^2))h(x_0,z) d\bar{\nu}_{x_0}^\varepsilon(z)\\
&= W_1(\bar{\nu}_{x_0}, T_* \bar{\nu}_{x_0})+ O(\delta^2 \varepsilon^2)+O(\delta \varepsilon^3),
\end{aligned}
$$
having applied the property $\int h(x_0,z) d\bar{\nu}_{x_0}^\varepsilon(z)=0$ on the last line.

\noindent By \cref{prop:variation-2}, for $\delta,\varepsilon$ sufficiently small the $f$ from \cref{lipschitz-function-wasserstein} satisfies
$$
\begin{aligned}
f(Tz)-f(z)&=\tilde{L}_2(\varepsilon, \delta)-\tilde{L}_2(\varepsilon,0) \\
&=L_1(\varepsilon,\delta)+O(\delta^2\varepsilon^2) +O(\delta \varepsilon^3) \\
&=d(Tz,z)+O(\delta^2\varepsilon^2) +O(\delta \varepsilon^3) \\
&= d(x_0,y)(1+O(\varepsilon^2)) +O(\delta^2\varepsilon^2)+O(\delta \varepsilon^3),
\end{aligned}
$$
which leads to the lower bound
$$
\begin{aligned}
W_1(\nu_{x_0}^\varepsilon, T_* \nu_{x_0}^\varepsilon) &\geq 
\int f(z)(d(T_* \nu_{x_0}^\varepsilon)(z)-d\nu_{x_0}^\varepsilon(z)) \\
&= \int (f(Tz) -f(z)) d\nu_{x_0}^\varepsilon(z) \\
&= \int (f(Tz) -f(z)) (1+h(x_0,z)) d\bar{\nu}_{x_0}^\varepsilon(z) \\
&=\int (f(Tz)-f(z)) d\bar{\nu}_{x_0}^\varepsilon(z) + \int d(y,x_0)(1+O(\varepsilon^2))h(x_0,z)d\bar{\nu}_{x_0}^\varepsilon(z)\\
&=W_1(\bar{\nu}_{x_0}, T_*\bar{\nu}_{x_0})+ O(\delta^2 \varepsilon^2)+O(\delta \varepsilon^3),
\end{aligned}
$$
having applied the mean zero property of $h$ again on the last line.
\end{proof}

\cref{wasserstein-pushforwards} and \cref{wasserstein-approximations} together yield \cref{thm:0}.

\section{Application to random geometric graphs}
\label{rgg-application}
Hoorn et al. \cite{hoorn-2023} showed that the coarse curvature of the random geometric graph sampled from a uniform Poisson point process on a Riemannian manifold converges in expectation to the smooth Ricci curvature as the intensity of the process increases.

\cref{thm:0} allows us to extend their result to manifolds equipped with a smooth potential $V: M \rightarrow \R$, sampling now from a Poisson point process with increasing intensity, proportional to the non-uniform measure $e^{-V(z)} \vol(dz)$. By sampling we mean using the empirical measures of the Poisson point process as the vertices of the random geometric graph. Our novel method allows to deal with this non-uniformity as well as the non-uniformity incurred by the exponential mapping. Since the main result of \cref{graph-continuum-limit-curvature} is at any rate local, we will assume that $M$ is compact.

We recall a few definitions in order to state our extended result. We refer to \cite{MR3791470} and \cite{MR1986198} for a background on Poisson point processes and random geometric graphs, respectively.
\begin{definition}
\label{poisson-point-process}
Let $(\mathcal{X}, \mathcal{B}, \mu)$ be a measure space with $\mu(\mathcal{X}) <\infty$, $\mathcal{M}(\mathcal{X})$ the set of measures on $\mathcal{X}$ and $(\Omega, \mathcal{F}, \mathbb{P})$ a probability space. A random measure $\mathcal{P}: \Omega \times \mathcal{B} \rightarrow [0,\infty]$, or equivalently $\mathcal{P}: \Omega \rightarrow \mathcal{M}(\mathcal{X})$, is said to be a Poisson point process on $\mathcal{X}$ with intensity $\mu$  if
\begin{enumerate}
    \item $\forall A \in \mathcal{B}$ with $\mu(A)<\infty$, $\mathcal{P}(\cdot,A)$ is a random variable with $\mathrm{Poisson}(\mu(A))$ distribution,
    \item $\forall A,B \in \mathcal{B}$ disjoint and $\mu$-finite, the random variables $\mathcal{P}(\cdot,A)$ and $\mathcal{P}(\cdot, B)$ are independent,
    \item $\forall \omega \in \Omega:$ $\mathcal{P}(\omega, \cdot)$ is a measure on $(\mathcal{X}, \mathcal{M})$.
\end{enumerate}
\end{definition}

Given a measure $\mu$, the Poisson point process can be constructed as follows. Let $N_0$ be a $\mathrm{Poisson}(\mu(\mathcal{X}))$  random variable, let $(X_i)_{i \in \mathbb{N}}$ be independent $X$-valued random variables which are also independent of $N_0$ and distributed as $\f{1}{\mu(\mathcal{X})}\mu$. Define the random measures
\begin{equation}
\label{poisson-process-deltas}
\mathcal{P}: \Omega \rightarrow \mathcal{M}(\mathcal{X}), \quad
\omega  \mapsto \sum_{i=1}^{N_0(\omega)} \delta_{X_i(\omega)}.
\end{equation}
$\mathcal{P}$ satisfies the properties in \cref{poisson-point-process}, and it can be shown that all Poisson point processes with a finite intensity measure take this form, see \cite[Chap. 6]{MR3791470}. 

The following transformation property of Poisson processes will be needed later:
\begin{lemma}
\label{ppp-pushforward}
    \cite{MR3791470} Let $(\mathcal{Y}, \mathcal{B}')$ be another measurable space and $\psi: \mathcal{X} \rightarrow \mathcal{Y}$ a measurable map. Then the push-forward process $\mathcal{P}': \Omega \times \mathcal{B}' \rightarrow  [0,\infty]$ defined by $\mathcal{P}'(\omega, \cdot) := \psi_* \mathcal{P}(\omega, \cdot)$ is a Poisson point process on $\mathcal{Y}$ with intensity $\psi_* \mu$. Writing the process $\mathcal{P}$ in the form (\ref{poisson-point-process}), the push-forward process can be written as
    $$
    \mathcal{P}'(\omega,\cdot) = \sum_{i=1}^{N_0(\omega)} \delta_{\psi(X_i(\omega))}.
    $$
\end{lemma}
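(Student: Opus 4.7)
The plan is to verify directly the three defining properties of a Poisson point process from \cref{poisson-point-process} for $\mathcal{P}'(\omega,\cdot)=\psi_*\mathcal{P}(\omega,\cdot)$, and separately confirm the explicit representation via the Dirac-mass formula. The key observation is that for any $A'\in\mathcal{B}'$ one has $\psi_*\mathcal{P}(\omega,A')=\mathcal{P}(\omega,\psi^{-1}(A'))$, which transports all the required distributional and independence statements from $\mathcal{P}$ back to $\mathcal{P}'$ through preimages.

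First I would check property (iii): for each fixed $\omega\in\Omega$, the pushforward of any measure under a measurable map is again a measure on the target space, so $\mathcal{P}'(\omega,\cdot)$ is automatically a measure on $(\mathcal{Y},\mathcal{B}')$. Next for property (i), given $A'\in\mathcal{B}'$ with $(\psi_*\mu)(A')=\mu(\psi^{-1}(A'))<\infty$, the random variable
\[
\mathcal{P}'(\cdot,A')=\mathcal{P}(\cdot,\psi^{-1}(A'))
\]
is $\mathrm{Poisson}(\mu(\psi^{-1}(A')))=\mathrm{Poisson}((\psi_*\mu)(A'))$ distributed by property (i) of $\mathcal{P}$. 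For property (ii), if $A',B'\in\mathcal{B}'$ are disjoint and $(\psi_*\mu)$-finite then $\psi^{-1}(A')$ and $\psi^{-1}(B')$ are disjoint and $\mu$-finite, hence $\mathcal{P}(\cdot,\psi^{-1}(A'))$ and $\mathcal{P}(\cdot,\psi^{-1}(B'))$ are independent, which gives the independence of $\mathcal{P}'(\cdot,A')$ and $\mathcal{P}'(\cdot,B')$.

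For the explicit formula, I would use the construction of $\mathcal{P}$ given just before the lemma: $\mathcal{P}(\omega,\cdot)=\sum_{k=1}^{\infty}\sum_{i=1}^{N_k(\omega)}\delta_{X_i^{(k)}(\omega)}$. The pushforward of a Dirac mass satisfies $\psi_*\delta_x=\delta_{\psi(x)}$, and the pushforward commutes with countable sums of measures by monotone convergence. Applying $\psi_*$ termwise therefore yields
\[
\mathcal{P}'(\omega,\cdot)=\sum_{k=1}^{\infty}\sum_{i=1}^{N_k(\omega)}\delta_{\psi(X_i^{(k)}(\omega))}=\sum_{k=1}^{\infty}\mathcal{P}'^{(k)}(\omega,\cdot),
\]
which is the stated representation.

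I do not anticipate a genuine obstacle here since the statement is a textbook property of Poisson processes and each step is a direct translation through the preimage map $\psi^{-1}$. The only mild care needed is to ensure that $\sigma$-finiteness of $\psi_*\mu$ is not silently assumed; if the partition $(W_k)$ from the construction of $\mathcal{P}$ does not pushforward to a partition of $\mathcal{Y}$, one can simply enlarge it by adjoining $\mathcal{Y}\setminus\psi(\mathcal{X})$ (assigned zero intensity) and verify the Poisson/independence properties on the generating $\pi$-system $\{A'\in\mathcal{B}':(\psi_*\mu)(A')<\infty\}$, which suffices by a standard monotone class argument.
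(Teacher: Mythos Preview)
Your proof is correct and follows exactly the same approach as the paper: both rely on the identity $\mathcal{P}'(\cdot,A')=\mathcal{P}(\cdot,\psi^{-1}(A'))$ to transfer the Poisson distribution and independence from $\mathcal{P}$ to $\mathcal{P}'$, with (iii) immediate since pushforwards of measures are measures. The paper's version is simply terser, omitting the explicit verification of the Dirac-mass representation and the care about $\sigma$-finiteness that you spell out.
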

\begin{proof}
    The random variable $\mathcal{P}'(\cdot,A) = \mathcal{P}(\cdot,\psi^{-1}(A))$ has a $\mathrm{Poisson}(\mu(\psi^{-1}(A)))$ distribution, meaning that $\mathcal{P}'$ has intensity $\psi_*\mu$. Properties (ii) and (iii) are preserved by the push-forward.
\end{proof}

Let $\mathcal{P}$ be a Poisson point process on a fixed neighbourhood of finite Lebesgue measure on the complete, orientable Riemannian manifold $M$ with Riemannian distance $d$ and $x_0, y \in M$ two points in the fixed neighbourhood. We implicitly identify every Dirac measure $\delta_x$ from the Poisson point process with the point $x$.
\begin{definition} 
\label{random-geometric-graph}
A random geometric graph sampled from $\mathcal{P}$ with roots $x_0, y$ and connectivity radius $\varepsilon >0$ is the weighted graph denoted by $G(x_0,y,\varepsilon)$ with nodes given by
$$
\mathcal{V}(\omega) = \{X_{i}(\omega) : 1\leq i \leq N_0(\omega)\} \cup \{x_0, y\},
$$
where the variables $X_i$ originate from the Poisson process $\mathcal{P}$ by (\ref{poisson-process-deltas}), and edges
$$
\{ (u,v) : u,v \in \mathcal{V}(\omega), d(u,v) < \varepsilon \}.
$$
The edge weights are given by manifold distance $d(u,v)$ for every edge $(u,v)$. Denote the graph distance by
$$
d_G(x,z) := \inf \left\{\sum_{i=1}^m d(x_i,x_{i+1}) : x_1=x, x_m=z, (x_i,x_{i+1}) \; \mathrm{ edge}, m \in \mathbb{N}\right\}.
$$
\end{definition}

\begin{notation}
\label{graphs-notation}
Let $V:M \rightarrow \mathbb{R}$ be a smooth potential on $M$, fix $x_0 \in M$ and denote by $(\mathcal{P}_n)_{n \in \mathbb{N}}$ the sequence of Poisson point processes with intensity measures 
\begin{equation}
\label{ppp-intensities}
n e^{-V(z)+V(x_0)}d\vol(z).
\end{equation}
For sequences $(\delta_n)_{n \in \mathbb{N}}, (\varepsilon_n)_{n \in \mathbb{N}}$ with
$\delta_n,\varepsilon_n \rightarrow 0$ as $n \rightarrow \infty$, label the sequence of
points $y_n := \exp_{x_0}(\delta_n v)$ approaching $x_0$ from a fixed direction $v \in
T_{x_0}M$. This gives rise to a sequence of rooted random graphs
$G_n(x_0,y_n,\varepsilon_n)$ according to \cref{random-geometric-graph} and the empirical
measure  representation by (\ref{poisson-process-deltas}) for the Poisson processes,
$\mathcal{P}_n(\omega, \cdot)=\sum_{i=1}^{N_{0,n}(\omega)} \delta_{X^{(n)}_i(\omega)}$.
For each $n\in \mathbb{N}$ denote the set of nodes
$$
\mathcal{V}_n(\omega) := \{X_{i}^{(n)}(\omega) : 1 \leq i \leq N_{0,n}(\omega)\} \cup \{x_0, y\},
$$
where the variables $\{X_i^{(n)}, N_{0,n}: i,n \in \mathbb{N}\}$ are in addition assumed to be independent. For the graph distance $d_{G_n}$ define the $\delta_n$-neighbourhood of $x$ in $G_n$ as
$$
B^{G_n}_{\delta_n}(x) := \{z \in \mathcal{V}_n : d_{G_n}(x,z) < \delta_n\}.
$$
For any node $x \in \mathcal{V}_n$ denote the normalized empirical measures
\begin{equation}
\label{empirical-measure}
\eta^{\delta_n}_x(\{z\}) := \begin{cases}
    \f{1}{\# (B^{G_n}_{\delta_n}(x))}, & z \in B^{G_n}_{\delta_n}(x),\\
    0, & \mathrm{otherwise},
\end{cases}
\end{equation}
which is equivalently written as
$$
\eta^{\delta_n}_x(A):= \sum_{z\in  B^{G_n}_{\delta_n}(x)} \f{\delta_z(A)}{\# (B^{G_n}_{\delta_n}(x))}.
$$
Denote the graph curvature of $G_n(x_0, y_n,\varepsilon_n)$ at $x_0$ in the direction of $v=\delta_n^{-1}\exp_{x_0}^{-1}(y_n)$ as
\begin{equation}
\label{coarse-curvature-graph}
\kappa_n(x_0, y_n) := 1- \f{W^{G_n}_1(\eta^{\delta_n}_{x_0}, \eta^{\delta_n}_{y_n})}{d_{G_n}(x_0,y_n)}.
\end{equation}
\end{notation}

\begin{remark}
For the sequence of graphs $G_n(x_0,y_n, \varepsilon_n)$ we thus have the corresponding collections of random empirical measures $(\eta^{\delta_n}_x)_{x \in \mathcal{V}_n}$ and coarse curvatures $\kappa_n(x_0,y_n)$ at $x_0$, the limit of which we are looking to establish in the sequel.
We emphasise that these are all random objects, i.e. dependent on $\omega \in \Omega$, although we suppress this from the notation.
\end{remark}

Convergence of the coarse graph curvature to the generalized Ricci curvature can be proved under an assumption on the rate of convergence of the graph parameters $\delta_n, \varepsilon_n$ to zero. For two sequences $(a_n), (b_n)$ we denote $a_n \sim b_n$ as $n \rightarrow \infty$ if there exist $c,C>0$ such that $cb_n \leq a_n \leq Cb_n$ for all $n \in \mathbb{N}$.
\begin{assumption}
\label{delta-epsilon-assumption}
Denoting $N=\dim M$, assume that $\varepsilon_n \leq \delta_n$ and
$$
\varepsilon_n \sim (\log n)^a n^{-\alpha}, \quad
\delta_n \sim (\log n)^b n^{-\beta},
$$
where the constants $\alpha,\beta, a,b$ are such that 
$$
0 < \beta \leq \alpha, \quad \alpha + 2\beta \leq \f{1}{N},
$$
and in case of equality in either of the above conditions additionally 
$$
\begin{cases}
    a \leq b & \mathrm{if } \; \alpha = \beta,\\
    a+2b > \f{2}{N} & \mathrm{if } \; \alpha+2\beta=\f{1}{N}.
\end{cases}
$$
\end{assumption}

Below is the main result of the section, with the relevant objects set up in \cref{graphs-notation}.
\begin{theorem}
\label{graph-continuum-limit-curvature}
Let $\kappa_n(x_0,y)$ be the graph curvatures corresponding to the rooted random graphs $G_n(x_0,y_n, \varepsilon_n)$ with $y_n=\exp_{x_0}(\delta_n v)$ generated by the sequence of Poisson processes with intensity measures 
$$
n e^{-V(z)+V(x_0)}\vol(dz),
$$
with nodes and edges as specified in \cref{graphs-notation}.
Under \cref{delta-epsilon-assumption}, there exists $(c_n)_{n \in \mathbb{N}}$ such that $\lim_{n\rightarrow \infty} c_n=0$ and
\begin{equation}
\label{G-M-distance}
\E\left[\left| W_1^{G_n}(\eta_{x_0}^{\delta_n}, \eta_{y_n}^{\delta_n})-W_1(\nu_{x_0}^{\delta_n}, \nu_{y_n}^{\delta_n}) \right|\right] \leq c_n\delta_n^3,
\end{equation}
which implies
\begin{equation}
\label{graph-curvature-convergence}
\lim_{n \rightarrow \infty} \E\left[\left| 
\f{2(N+2)}{\delta_n^2} \kappa_n - (\Ric_{x_0}(v,v)+2 \Hess_{x_0} V(v,v))
\right| \right] = 0.
\end{equation}
\end{theorem}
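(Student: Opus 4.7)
The plan is to split the approximation in (\ref{G-M-distance}) into a discrepancy of metrics and a discrepancy of measures, and then combine with Theorem \ref{thm:0} to extract the curvature. Applying Theorem \ref{thm:0} with $\varepsilon=\delta=\delta_n$ yields
$$W_1(\nu_{x_0}^{\delta_n}, \nu_{y_n}^{\delta_n}) = \delta_n - \frac{\delta_n^3}{2(N+2)}\bigl(\Ric_{x_0}(v,v) + 2\Hess_{x_0}V(v,v)\bigr) + O(\delta_n^4).$$
Once (\ref{G-M-distance}) is proved, this expansion combined with the fact $d_{G_n}(x_0,y_n) = \delta_n(1+o(1))$ in $L^1$ (see below) gives (\ref{graph-curvature-convergence}) upon multiplying by $2(N+2)/\delta_n^2$, since the residue from (\ref{G-M-distance}), of size $c_n\delta_n^3/\delta_n^3 = c_n$, tends to zero.

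For the core estimate (\ref{G-M-distance}), I would decompose
$$W_1^{G_n}(\eta_{x_0}^{\delta_n},\eta_{y_n}^{\delta_n}) - W_1(\nu_{x_0}^{\delta_n},\nu_{y_n}^{\delta_n}) = A_n + B_n,$$
where $A_n$ measures the discrepancy due to replacing the graph metric $d_{G_n}$ by the manifold metric $d$ (with the same pair of measures), and $B_n$ measures the discrepancy between the empirical measures $\eta_x^{\delta_n}$ and the weighted measures $\nu_x^{\delta_n}$ (with the same metric $d$). For $A_n$, the argument from \cite{hoorn-arxiv} adapts: under Assumption \ref{delta-epsilon-assumption} the number of Poisson points in any $\varepsilon_n$-ball is large enough, with high probability, that any $d$-geodesic of length $r=O(\delta_n)$ is shadowed by a graph chain of length $r(1+o(1))$, hence $d_{G_n}=d(1+o(1))$ uniformly on the scale of interest. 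For $B_n$, Lemma \ref{ppp-pushforward} identifies $\#B^{G_n}_{\delta_n}(x)$ as a Poisson variable with mean $\sim n\int_{B_{\delta_n}(x)} e^{-V(z)+V(x_0)}\,d\vol(z)$; Poisson concentration together with Lemma \ref{lemma:density-mu-nu} gives a quantitative 1-Wasserstein approximation of $\eta_x^{\delta_n}$ by $\nu_x^{\delta_n}$. Plugging both estimates into the Kantorovich--Rubinstein duality controls $A_n$ and $B_n$ by the same orders as the pointwise approximations above.

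The technical obstacle is achieving the sharp $o(\delta_n^3)$ rate in expectation. The 1-Wasserstein error between the empirical measure of $k$ i.i.d.\ samples and its law, restricted to a ball of radius $\delta_n$, is generically of order $\delta_n k^{-1/N}$; with $k\sim n\delta_n^N$ this becomes $n^{-1/N}$, so we require $n^{-1/N}=o(\delta_n^3)$, i.e.\ $3\beta < 1/N$. Because $\beta\le\alpha$, this follows from the constraint $\alpha+2\beta\le 1/N$ of Assumption \ref{delta-epsilon-assumption}, while the equality case is covered by the logarithmic refinement $a+2b>2/N$. The delicate piece is to marry the stochastic control on $A_n$ and $B_n$ uniformly over the (random) vertices of $G_n$ without losing the $o(\delta_n^3)$ rate; this would be carried out via a union bound over a deterministic $\varepsilon_n$-net of a fixed compact neighbourhood of $x_0$ and a Borel--Cantelli-type upgrade from high-probability bounds to bounds in expectation, exploiting the uniform boundedness of $d_{G_n}(x,z)$ on the support of $\eta_{x_0}^{\delta_n}\otimes\eta_{y_n}^{\delta_n}$.
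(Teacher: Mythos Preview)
Your overall strategy matches the paper's: reduce (\ref{graph-curvature-convergence}) to (\ref{G-M-distance}) via Theorem~\ref{thm:0}, then split (\ref{G-M-distance}) into a metric discrepancy and a measure discrepancy. The paper carries out the metric part exactly as you sketch, through an extended metric $d_n$ on the manifold (Lemma~\ref{approximating-properties}, Lemma~\ref{tilde-wasserstein-approximation}) and a high-probability event $\Omega_n$ with $\mathbb{P}(\Omega_n^c)=o(\delta_n^3)$; your ``shadowing by graph chains'' description is correct but the paper packages it via a three-term splitting $W_1^{G_n}(\eta,\eta)-W_1^{d_n}(\eta,\eta)$, $W_1^{d_n}(\eta,\eta)-W_1^{d_n}(\nu,\nu)$, $W_1^{d_n}(\nu,\nu)-W_1(\nu,\nu)$ rather than your two-term $A_n+B_n$.

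The substantive difference is in your $B_n$. You propose to control $\E[W_1(\eta_x^{\delta_n},\nu_x^{\delta_n})]$ by ``Poisson concentration together with Lemma~\ref{lemma:density-mu-nu}''. This is where the paper invests its real work, and your description is not enough to close the argument: a density estimate alone does not give the Talagrand-type $W_1$ rate for empirical versus continuous measures, and redoing a matching/transport argument cell-by-cell for a \emph{non-uniform} intensity at the sharp order is delicate. The paper instead reduces to the \emph{uniform} case already handled in \cite{hoorn-arxiv} (Lemma~\ref{poisson-empirical-approximation}) by constructing, via the Dacorogna--Moser prescribed-Jacobian theorem (Lemma~\ref{deformation-to-uniform}, Corollary~\ref{vector-field-F}), a diffeomorphism $\psi$ of $\tilde B_{\delta_n}(x)$ with $\det D\psi = d((\exp_x^{-1})_*\nu_x^{\delta_n})/d\tilde\mu_x^{\delta_n}$ and $\|\nabla(\psi-\mathrm{id})\|\le C\delta_n$; pushing forward by $\exp_x\circ\psi$ simultaneously sends the uniform Poisson process to the weighted one and the uniform measure to $\nu_x^{\delta_n}$, while distorting distances only by a factor $1+O(\delta_n)$ (Lemma~\ref{distance-to-wasserstein}). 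This is the missing idea in your proposal. Your rate heuristic $n^{-1/N}=o(\delta_n^3)$ under Assumption~\ref{delta-epsilon-assumption} is correct, but it only becomes usable once the problem has been transported back to the uniform setting.
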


\begin{remark}
    By contrast, the theorem of Hoorn et al. \cite[Theorem 3]{hoorn-2023} used a Poisson point process of increasing uniform volume intensity and concluded pointwise convergence in expectation as in \eqref{graph-curvature-convergence} with $V=0$. \cref{graph-continuum-limit-curvature} above thus provides an extension for arbitrary $V: M \rightarrow \R$.
\end{remark}

\begin{remark}
\label{1000}
The asymptotic bound (\ref{G-M-distance}) is indeed sufficient for (\ref{graph-curvature-convergence}), because in combination with \cref{thm:0} and by the triangle inequality, we have the upper bound for the latter given by
$$
\begin{aligned}
&\f{2(N+2)}{\delta_n^2}\E\left[\left|\kappa_n(x_0,y_n)-\left(1-\f{W_1(\nu_{x_0}^{\delta_n}, \nu_{y_n}^{\delta_n})}{\delta_n}\right)\right|\right]\\
&\quad +\left|\f{2(N+2)}{\delta_n^2}\left(1-\f{W_1(\nu_{x_0}^{\delta_n}, \nu_{y_n}^{\delta_n})}{\delta_n}\right)- (\Ric_{x_0}(v)+2 \Hess_{x_0} V(v))\right|.
\end{aligned}
$$
The second term converges to 0 by (\ref{coarse-ricci}) and the first term can be written as
$$
\f{2(N+2)}{\delta_n^3} \E\left[\left| W_1^{G_n}(\eta_{x_0}^{\delta_n}, \eta_{y_n}^{\delta_n})-W_1(\nu_{x_0}^{\delta_n}, \nu_{y_n}^{\delta_n}) \right|\right],
$$
which vanishes as $n\rightarrow \infty$ if (\ref{G-M-distance}) holds. 
\end{remark}

For the rest of this section, we focus on establishing (\ref{G-M-distance}). We follow the methods laid out in \cite{hoorn-2023}, which consist in showing that the graph distances $d_{G_n}$ can be extended to the manifold to give a close approximation of the Riemannian distance $d$, and then using such extension to estimate the difference (\ref{G-M-distance}).

Let $(\lambda_n)_{n \in \mathbb{N}}$ be the sequence given by
\begin{equation}
    \label{lambda-sequence}
\lambda_n := (\log n)^\frac{2}{N} n^{-\frac{1}{N}}.
\end{equation}
This can be regarded in view of the following definition  as a "distance extension radius". Denote by $B_{\lambda_n}(x)$ the geodesic ball in the original manifold $M$ centered at point $x$ with radius $\lambda$.

\begin{definition}
    \label{extended-metric}
Let $G_n=G_n(x_0,y_n,\delta_n)$ be the rooted random geometric graphs of \cref{graph-continuum-limit-curvature} and for any $x,y \in M$ denote by $G_n \cup \{x,y\}$ the graph extended by nodes $\{x,y\}$ and extended by edges $\{(x,z) : z\in B_{\lambda_n}(x) \cap G_n\}\cup \{(y,z) : z\in B_{\lambda_n}(y) \cap G_n\}$ with extension radius $\lambda_n$. Further, let $(d_n)_{n \in \mathbb{N}}$ be the sequence of random functions $d_n: M \times M \rightarrow \R$ defined by
$$
d_n(x,y) := d(x,\tilde{x}) + d(y,\tilde{y}) + d_{G_n}(\tilde{x}, \tilde{y}),
$$
where
$$
\tilde{x} = \underset{z \in B_{\lambda_n}(x) \cap G_n}{\mathrm{argmin}} d(x,z), \quad \tilde{y} = \underset{z \in B_{\lambda_n}(y) \cap G_n}{\mathrm{argmin}} d(y,z),
$$
if both $\tilde{x}, \tilde{y}$ exist, and set $d_n(x,y)=\infty$ otherwise.
\end{definition}

In the definition above and in the lemma below,  
while $\tilde{x}, \tilde{y}$ may not exist for every $\omega \in \Omega$, rendering $d_n(x,y)=\infty$, the following states there is an event of high probability where this does not occur and where $d_n$ is moreover a metric. The following results from combining \cite[Def. 4.1 \& Prop. 4]{hoorn-2023} via \cite[Lemma 1]{hoorn-2023}:

\begin{lemma}
\label{approximating-properties}
    For all pairs of sequences $(\delta_n)_{n\in \mathbb{N}}$ and $(\varepsilon_n)_{n\in \mathbb{N}}$ satisfying \cref{delta-epsilon-assumption} and all positive numbers $Q$, there exists a sequence of events $(\Omega_n)_{n \in \mathbb{N}}$ and a sequence of real positive numbers $(c_n)_{n\in \mathbb{N}}$ such that $\lim_{n\rightarrow \infty} c_n=0$, $\mathbb{P}(\Omega_n)\geq 1-c_n \delta_n^3$, and the following properties hold for all $n\in \mathbb{N}$ and $\omega \in \Omega_n$:
    \begin{enumerate}
        \item $(B_{\delta_nQ}(x_0), d_n)$ is a metric space,
        \item for all $z_1,z_2 \in B_{\delta_nQ}(x_0):$ $|d_n(z_1,z_2)-d(z_1,z_2)| \leq c_n \delta_n^3$,
        \item for all $x,y \in B_{\delta_nQ}(x_0)$ there exists a path in $G_n \cup \{z_1,z_2\}$ connecting $z_1$ and $z_2$.
    \end{enumerate}
    Here $d_n$ is the random extended metric of \cref{extended-metric} and $G_n$ is the random graph of \cref{graphs-notation}, both being dependent on $\delta_n$ and $\varepsilon_n$.
\end{lemma}

\begin{remark}
\begin{itemize}
    \item The interpretation of the properties in \cref{approximating-properties} is that on the event $\Omega_n$ the random graph covers $B_{\delta_n Q}(x_0)$ in a way that allows to extend the graph distance to a distance on the manifold that closely approximates the original Riemannian distance with high probability.
    \item $Q$ can be interpreted as a localization scalar and can be an arbitrary finite number. For $Q >3$ it ensures in particular that if $z_1,z_2 \in B_{\delta_n}(x_0) \cup B_{\delta_n}(\exp_{x_0}(\delta_n v))$ then the geodesic connecting $z_1$ with $z_2$ lies in $B_{Q\delta_n}(x_0)$ by the triangle inequality, i.e. any point on the connecting geodesic lies in the "region of good approximation".
\end{itemize}    

\end{remark}

The events $(\Omega_n)_{n \in \mathbb{N}}$ for a weighted manifold are the same as in the uniform case. We now briefly outline their construction for completeness of the argument, although they will be treated as given from \cref{close-metrics} onwards.

The following is an adjustment of \cite[Lemma 2]{hoorn-2023} for non-uniform Poisson intensity. It is preliminary for the proof of \cref{cn-events}.

\begin{lemma}
\label{prelim-good-events} Let $(G_n(x_0,y_n,\delta_n))_{n \in \mathbb{N}}$ be the sequence of random geometric graphs of \cref{graphs-notation}.
Let $c>0$ and $(U_n)_{n\in \mathbb{N}}$ a sequence of finite subsets of $M$ such that $\# (U_n) = O(n^c)$ and let $(r_n)_{n\in \mathbb{N}}$ be a sequence with $r_n \sim \lambda_n$. Then there exists a sequence of positive reals $(c_n)_{n \in \mathbb{N}}$ such that $c_n \rightarrow 0$ and
$$
\mathbb{P} \left( \bigcup_{u \in U} \{B_{r_n}(u) \cap G_n = \emptyset\} \right) \leq c_n\delta_n^3 \quad \forall n \in \mathbb{N}.
$$
\end{lemma}

\begin{proof}
    By definition of the Poisson point process with intensity $n e^{-V(z)+V(x_0)}\vol(dz)$, the number of vertices of $G_n$ in the neighbourhood $B_{r_n}(u)$ is given by the Poisson random variable with mean
    $$
    \begin{aligned}
    n \int_{B_{r_n}(u)} e^{-V(z)+V(x_0)} \vol(dz) &= n e^{-V(u)+ V(x_0)} \int_{B_{r_n}(u)} e^{-V(z)+V(u)} \vol(dz) \\
    &= n e^{-V(u)+V(x_0)} \int_{B_{r_n}(u)} (1+O(r_n)) \vol(dz) \\
    &\sim n e^{-V(u)+V(x_0)} r_n^{\dim M} (1+O(r_n))\\
    &\sim n r_n^{\dim M} \\
    &\sim n \lambda_n^{\dim M}.
    \end{aligned}
    $$
    The term $e^{-V(z)+V(u)}$ with $z \in B_{r_n}(u)$ was bounded to an error or order $r_n$, and the factors which shrink to 1 as $n$ gets large were dropped due to the asymptotic relation $\sim$.

    On the penultimate line, we used the assumption that $M$ is compact, hence
    $$
    0 < \inf_{u \in M} e^{-V(u)} \leq \sup_{u \in M} e^{-V(u)} < \infty,
    $$
    so the factor $e^{-V(u)+V(x_0)}$ could be absorbed into the implicit proportionality constant. On the last line, we applied that $r_n \sim \lambda_n$. Then
    $$
    \begin{aligned}
    \mathbb{P}(B_{r_n}(u) \cap G_n = \emptyset) = \exp \left(n \int_{B_{r_n}(u)} e^{-V(z)+V(x_0)} \vol(dz) \right) = e^{-n \Theta(\lambda_n^{\dim M})},
    \end{aligned}
    $$
    and the rest of the argument follows as in \cite[Lemma 2]{hoorn-2023}.
\end{proof}

The following two lemmas together lead to specification of the events $\Omega_n$. They do not require adjustment for non-uniformity, hence for their proof we refer to \cite{hoorn-2023}. In particular, \cref{cn-events} can be obtained by an application of the previous lemma together with a covering argument on Riemannian manifolds.

\begin{lemma}
\label{cn-events}
\cite[Corollary 2]{hoorn-2023}
Let $(G_n(x_0,y_n,\delta_n))_{n \in \mathbb{N}}$ be the sequence of random geometric graphs of \cref{graphs-notation}. There exists a sequence of positive reals $(c_n)_{n\in \mathbb{N}}$ with $c_n \rightarrow 0$, a sequence of integers $(m_n)_{n \in \mathbb{N}}$ such that $m_n \sim \lambda_n^{-N}$ with $\lambda_n$ given by \eqref{lambda-sequence}, and for every $n \in \mathbb{N}$ a sequence of geodesic balls $B_{\lambda_n/4}(x_1), \ldots, B_{\lambda_n/4}(x_{m_n})$ covering a fixed compact neighbourhood of $x_0$ such that, denoting
$$
C_n := \bigcap_{i=1}^{m_n} \{B_{\lambda_n /4}(x_i) \cap G_n \neq \emptyset\},
$$
it holds that
$$
\mathbb{P}\left(C_n\right) \geq 1 - c_n\delta_n^3 \quad \forall n \in \mathbb{N}.
$$
\end{lemma}

The events $C_n$ warrant sufficient density of the vertices of the graphs, and thus finiteness of the extended metric $d_n$.

\begin{lemma}
\label{an-events}
\cite[Lemma 7]{hoorn-2023} 
Let $(G_n(x_0,y_n,\delta_n))_{n \in \mathbb{N}}$ be the sequence of random geometric graphs of \cref{graphs-notation} and let $Q >3$. There exists a sequence of positive reals $(c_n)_{n\in \mathbb{N}}$ with $c_n \rightarrow 0$ such that, denoting
$$
A_n:=\bigcap_{x,y \in B_{\delta_n Q}(x_0) \cap G_n} \{|d_{G_n}(x,y)-d(x,y)| \leq \f{3\lambda_n}{\varepsilon_n} d(x,y) + 2\lambda_n\},
$$
it holds that
$$
\mathbb{P}\left( A_n \right) \geq 1- c_n \delta_n^3 \quad \forall n \in \mathbb{N}.
$$
\end{lemma}

The events $A_n$ ensure that the extended metric $d_n$ matches closely the Riemannian metric $d$.
Then $\Omega_n := C_n \cap A_n$, $n\in \mathbb{N}$, are the events for \cref{approximating-properties}.

\begin{remark}
\begin{itemize}
\item The events $\Omega_n$ are thus constructed precisely to capture the Riemannian metric to a sufficient degree of approximation, while also granting sufficient density of graph vertices. These two properties are warranted by the events $A_n$ and $C_n$, respectively.
\item The non-uniform weight $e^{-V}$ only appears in \cref{prelim-good-events}, while the proofs of \cref{cn-events} and \cref{an-events} are unaffected by the weight. The non-uniformity is also important for construction of the sequence of random graphs $(G_n)_{n\in \mathbb{N}}$ in \cref{graphs-notation}, so that the corresponding empirical measures $\eta_{x_0}^{\delta_n}$ and $\eta_y^{\delta_n}$ approximate the non-uniform test measures $\nu_{x_0}^{\delta_n}$ and $\nu_y^{\delta_n}$, respectively, in the Wasserstein distance.

\item Instead of the Riemannian distance of connected vertices one could in general use any graph metric satisfying properties in \cref{approximating-properties}. Such graph metrics are referred to in \cite{hoorn-2023} as "$\delta_n$-good approximations" to the Riemannian metric.
\end{itemize}
\end{remark}

From \cite[Lemma 4]{hoorn-2023}, we quote:
\begin{lemma}
\label{close-metrics}
Let $d,\tilde{d}$ be two metrics on a set $X$ such that $(X,d)$, $(X,\tilde{d})$ are Polish spaces. If there exists a $C >0 $ such that
$$
\sup_{x_1 \in X, x_2 \in X} |d(x_1,x_2)-\tilde{d}(x_1,x_2)| \leq C,
$$
then for all probability measures $\mu,\nu$ on $X$:
$$
|W^{d}_1(\mu,\nu) - W^{\tilde{d}}_1(\mu,\nu)| \leq C.
$$
\end{lemma}

Denote by $W, W^{d_n}, W^{G_n}$ the Wasserstein distances corresponding to the Riemannian distance $d$, approximating manifold distance $d_n$, and the graph distance $d_{G_n}$, respectively.
As an immediate consequence of the preceding lemma and property (ii) in \cref{approximating-properties}, with $C=c_n\delta_n^3$, the Wasserstein distances $W^{d_n}$ and $W$ are close on $\Omega_n$:
\begin{corollary}
\label{tilde-wasserstein-approximation}
Given the sequence events $(\Omega_n)_{n\in \mathbb{N}}$ specified in \cref{approximating-properties}, there exists a sequence $(c_n)_{n \in \mathbb{N}}$ such that $c_n \rightarrow 0$ and for all probability measures $\mu,\nu$ on $B_{\delta_n Q}(x_0)$:
$$
\mathbb{E}\left[ |W_1(\mu,\nu) -W_1^{d_n}(\mu,\nu)| \Big| \Omega_n \right] \leq c_n \delta_n^3.
$$
\end{corollary}

\cref{tilde-wasserstein-approximation} provides an intermediate step in proving \cref{graph-continuum-limit-curvature}. To prove \cref{graph-continuum-limit-curvature},
we continue following along the lines of \cite{hoorn-2023}. On the event $\Omega_n$, the integrand in (\ref{G-M-distance}) can be split into three parts as
\begin{equation}
\label{splitting-wasserstein-distances}
\begin{aligned}
\left( W_1^{G_n}(\eta_{x_0}^{\delta_n}, \eta_{y_n}^{\delta_n})-W^{d_n}_1(\eta_{x_0}^{\delta_n}, \eta_{y_n}^{\delta_n}) \right)
&+ \left( W^{d_n}_1(\eta_{x_0}^{\delta_n}, \eta_{y_n}^{\delta_n})  - W^{d_n}_1(\nu_{x_0}^{\delta_n}, \nu_{y_n}^{\delta_n}) \right) \\
&+ \left( W^{d_n}_1(\nu_{x_0}^{\delta_n}, \nu_{y_n}^{\delta_n})-W_1(\nu_{x_0}^{\delta_n}, \nu_{y_n}^{\delta_n}) \right).
\end{aligned}
\end{equation}
The first term is 0 since the measures $\eta_{x_0}^{\delta_n}, \eta_{y_n}^{\delta_n}$ are supported on the nodes of the graphs $G_n$ and $d_n(x,y) = d_{G_n}(x,y)$ for all nodes $x,y \in \mathcal{V}_n$ as $d_n$ extends $d_{G_n}$ from the graph to the manifold. The last term is $o(\delta_n^3)$ on $\Omega_n$ by \cref{tilde-wasserstein-approximation} if we take arbitrary $Q>3$.
We can further estimate the second term in conditional expectation as
\begin{equation}
\label{eq:100}
\begin{aligned}
\E&\left[|W^{d_n}_1(\eta_{x_0}^{\delta_n}, \eta_{y_n}^{\delta_n})  - W^{d_n}_1(\nu_{x_0}^{\delta_n}, \nu_{y_n}^{\delta_n})| \big| \Omega_n \right] \\
&\hspace{2.5cm}\leq \E\left[ W^{d_n}_1(\eta_{x_0}^{\delta_n}, \nu_{x_0}^{\delta_n}) + W^{d_n}_1(\eta_{y_n}^{\delta_n}, \nu_{y_n}^{\delta_n}) \big| \Omega_n \right] \\
&\hspace{2.5cm} \leq \E\left[W_1(\eta_{x_0}^{\delta_n}, \nu_{x_0}^{\delta_n}) + W_1(\eta_{y_n}^{\delta_n}, \nu_{y_n}^{\delta_n}) \big| \Omega_n \right] + 2 c_n \delta_n^3.
\end{aligned}
\end{equation}
For some sequence $(c'_n)_{n \in \mathbb{N}}$ with $\lim_n c'_n =0$, the total expectation is therefore
\begin{equation}
\label{eq:101}
\begin{aligned}
&\E\left[\left| W_1^{G_n}(\eta_{x_0}^{\delta_n}, \eta_y^{\delta_n})-W_1(\nu_{x_0}^{\delta_n}, \nu_y^{\delta_n}) \right|\right] \\
&= \E\left[\left| W_1^{G_n}(\eta_{x_0}^{\delta_n}, \eta_y^{\delta_n})-W_1(\nu_{x_0}^{\delta_n}, \nu_y^{\delta_n}) \right| | \Omega_n\right] \mathbb{P}(\Omega_n) \\
& \quad + \E\left[\left| W_1^{G_n}(\eta_{x_0}^{\delta_n}, \eta_y^{\delta_n})-W_1(\nu_{x_0}^{\delta_n}, \nu_y^{\delta_n}) \right| |\Omega\setminus \Omega_n\right] (1-\mathbb{P}(\Omega_n))\\
&\leq \E[W_1(\eta_{x_0}^{\delta_n}, \nu_{x_0}^{\delta_n})|\Omega_n] + \E[W_1(\eta_{y}^{\delta_n}, \nu_{y}^{\delta_n})|\Omega_n] +c'_n\delta_n^3.
\end{aligned}
\end{equation}
We used that the second addend on the second line is uniformly bounded by $c_n\delta_n^3$ because $1-\mathbb{P}(\Omega_n) \leq c_n \delta_n^3$ by \cref{approximating-properties} and $\mathbb{P}$-almost surely
$$
W_1^{G_n}(\eta_{x_0}^{\delta_n}, \eta_y^{\delta_n}) \leq 3 \delta_n, \quad W_1(\nu_{x_0}^{\delta_n}, \nu_y^{\delta_n}) \leq 3 \delta_n.
$$
The first holds because the edge $(x_0,y)$ is always in the graph and every node in the support of $\eta_{x_0}^{\delta_n}$ (resp. $\eta_y^{\delta_n}$) is at most $\delta_n$ apart from $x_0$ (resp. $y$), hence $\textrm{supp } \eta_{x_0}^{\delta_n} \cup \textrm{ supp } \eta_y^{\delta_n} \subset B^{G_n}_{3\delta_n}(x_0)$. The second holds because $\textrm{supp } \nu_{x_0}^{\delta_n} \cup \textrm{ supp } \nu_y^{\delta_n} \subset B_{3\delta_n}(x_0)$ by Riemannian metric structure.

It remains to show that the terms on the right in (\ref{eq:101}) decay faster than $c''_n \delta_n^3$ as $n \rightarrow \infty$ for some sequence $(c''_n)_{n\in \mathbb{N}}$ with $\lim_n c''_n = 0$, which requires an extension of the original argument of \cite{hoorn-2023} to non-uniform measures. In particular, we show a generalization of Proposition 7 of the mentioned work, which is restated below in \cref{poisson-empirical-approximation}, and our generalization is \cref{nonuniform-wasserstein-matching}.

Our method consists in deforming uniform measures into non-uniform measures with small deviations from uniformity, and then showing the corresponding perturbance of distance is small enough, so that the change in the Wasserstein distance is also small.
The deformation maps we shall employ come from the next lemma.

For $U\subset \mathbb{R}^N$, an open subset, $k\in \mathbb{N}$, and $\alpha \in (0,1)$, denote by $C^{k,\alpha}(U)$ the Hölder space of functions with the norm
$$
\|g\|_{C^{k,\alpha}(U)}:= \sum_{i=0}^k \sup_{x\in U} \|\nabla^i g(x)\|+ \sup_{\substack{x,y\in U\\ x\neq y}} \frac{\|\nabla^k g(x)-\nabla^k g(y)\|}{\|x-y\|^{\alpha}}.
$$
Combining Theorems 1 and 2 of \cite{MR1046081} we note:
\begin{lemma}
\label{deformation-to-uniform}
Let $U \subset \R^N$ be an open set with a $C^{3+k,\alpha}$ boundary and $g \in C^{k,\alpha}(U)$ with $\int_{U} g(w) dw =0$. Then there exists a vector field $F \in C^{k+1,\alpha}(U,\R^N)$ such that the map $\psi: U \rightarrow U$ given by $\psi(x) =x+ F(x)$ is a diffeomorphism satisfying
\begin{equation}
\label{jacobian-pde}
\begin{aligned}
\det D\psi(x) &= 1+g(x) &\mathrm{in } \; U, \\
\psi(x)&=x &\mathrm{on } \; \partial U.
\end{aligned}
\end{equation}
Moreover, there is $C(U,k,\alpha)>0$ such that $\|F\|_{C^{k+1,\alpha}(U)} \leq C\|g\|_{C^{k,\alpha}(U)}$.
\end{lemma}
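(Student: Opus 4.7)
The plan is to recognise the lemma as a direct restatement of the Dacorogna--Moser theorem on prescribed Jacobians, so that the proof reduces to a translation of hypotheses followed by citation of the two theorems invoked. Setting $f := 1+g$, the hypothesis $\int_U g\,dw = 0$ becomes $\int_U f\,dw = |U|$, and provided $\|g\|_{C^0(U)} < 1$ we moreover have $f > 0$ on $U$. The boundary value problem $\det D\psi = f$ with $\psi|_{\partial U} = \mathrm{id}$ is then precisely the problem solved by Dacorogna and Moser.

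First I would invoke Theorem~1 of \cite{MR1046081}: for $U \subset \R^N$ bounded, connected, with boundary of class $C^{k+3,\alpha}$, and for $f \in C^{k,\alpha}(\overline{U})$ with $f > 0$ and $\int_U f = |U|$, there exists a diffeomorphism $\psi \in C^{k+1,\alpha}(\overline{U};\R^N)$ satisfying $\det D\psi = f$ and $\psi|_{\partial U} = \mathrm{id}$. Defining $F := \psi - \mathrm{id}$ produces the vector field in $C^{k+1,\alpha}(U;\R^N)$ required by the statement, with $F = 0$ on $\partial U$, and the prescribed Jacobian identity $\det D\psi(x) = 1+g(x)$ follows immediately.

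Next I would invoke Theorem~2 of the same paper, which is the quantitative companion: for $f$ in a sufficiently small neighbourhood of the constant function $1$ in the $C^{k,\alpha}$ norm, the solution $\psi$ constructed above depends continuously on $f$, and in particular there exists a constant $C = C(U,k,\alpha) > 0$ such that
$$\|\psi - \mathrm{id}\|_{C^{k+1,\alpha}(U)} \;\leq\; C\,\|f-1\|_{C^{k,\alpha}(U)} \;=\; C\,\|g\|_{C^{k,\alpha}(U)}.$$
This gives the stated bound on $F$.

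The only subtlety I would want to double-check, and the nearest thing to an obstacle, is the positivity requirement $1+g > 0$ needed to ensure that $\psi$ is a genuine diffeomorphism rather than just a smooth map with nonnegative Jacobian. In the application to random geometric graphs the relevant $g$ will arise as an $O(\varepsilon^2)$ perturbation of zero, so a qualitative smallness assumption $\|g\|_{C^0} < 1$ is automatic. For completeness one could recall that the Dacorogna--Moser proof itself proceeds either by a continuation method (solving a divergence equation $\Div F_t = \partial_t f_t / f_t$ at each $t \in [0,1]$ with zero boundary data and assembling $\psi$ as the time-$1$ flow) or by a Banach fixed-point argument near the identity; either route yields the linear $C^{k,\alpha}$-to-$C^{k+1,\alpha}$ estimate directly, so no additional work is needed beyond the citation.
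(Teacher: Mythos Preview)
Your proposal is correct and matches the paper's approach exactly: the paper does not give a proof at all but simply introduces the lemma with the sentence ``Combining Theorems 1 and 2 of \cite{MR1046081} we note:'' and states the result. Your observation about the implicit positivity hypothesis $1+g>0$ is a valid point that the paper's statement glosses over; as you correctly note, in the intended application $g$ is $O(\delta)$ so this is automatic.
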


For our application, let $k=0$, $\alpha \in (0,1)$ arbitrary and $U=B_\delta(0)\subset \R^N$ with varying parameter $\delta >0$. 
We emphasize that in general the constant $C(U,k,\alpha)$ does depend on $U$, but for $U=B_\delta(0)$ one can deduce the following estimate which is uniform in $\delta$.
\begin{lemma}
There exists a $C>0$ such that for all $\delta >0$ and every $g \in C^1(\mathbb{R}^N)$ with $\int_{B_\delta(0)} g(z)dz =0 $ there exists $F \in C^1(B_\delta(0), \mathbb{R}^N)$ such that $\psi:B_\delta(0)\rightarrow B_\delta(0)$ given by $\psi(w) = w+ F(w)$ satisfies
\begin{equation}
\begin{aligned}
\det D\psi(w) &= 1+g(w) & \mathrm{in } \; B_\delta(0), \\
\psi(w)&=w & \mathrm{on } \; \partial B_\delta(0).
\end{aligned}
\end{equation}
Moreover,
\begin{equation}
\label{grad-F-bound}
\sup_{w \in B_\delta(0)} \|\nabla F(w)\| \leq C(\sup_{w \in B_\delta(0)} |g(w)| + \delta \sup_{w \in B_\delta(0)} \|\nabla g(w)\|).
\end{equation}
\end{lemma}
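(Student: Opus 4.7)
The plan is to reduce to the fixed unit ball via scaling, where the constant from \cref{deformation-to-uniform} depends only on $B_1(0)$ and an arbitrary but fixed Hölder exponent $\alpha \in (0,1)$, and so is automatically independent of $\delta$. Given $g \in C^1(\mathbb{R}^N)$ with $\int_{B_\delta(0)} g = 0$, I would introduce the rescaled function $\tilde g(x) := g(\delta x)$ on $B_1(0)$. A change of variables gives $\int_{B_1(0)} \tilde g(x)\,dx = \delta^{-N}\int_{B_\delta(0)} g(w)\,dw = 0$, so the hypothesis of \cref{deformation-to-uniform} holds for $\tilde g$ on $B_1(0)$ with $k=0$ and any $\alpha \in (0,1)$.

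Applying \cref{deformation-to-uniform} produces $\tilde F \in C^{1,\alpha}(B_1(0),\mathbb{R}^N)$ such that $\tilde\psi(x) := x + \tilde F(x)$ is a diffeomorphism of $B_1(0)$ fixing the boundary and satisfying $\det D\tilde\psi(x) = 1 + \tilde g(x)$, with the estimate $\|\tilde F\|_{C^{1,\alpha}(B_1(0))} \leq C_0 \|\tilde g\|_{C^{0,\alpha}(B_1(0))}$ for a constant $C_0$ depending only on $\alpha$ and on the fixed smooth domain $B_1(0)$. I would then pull back by defining $F(w) := \delta\, \tilde F(w/\delta)$ and $\psi(w) := w + F(w) = \delta\, \tilde\psi(w/\delta)$. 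The chain rule yields $D\psi(w) = D\tilde\psi(w/\delta)$, the outer dilation factor $\delta$ cancelling the inner chain-rule factor $1/\delta$. Consequently $\det D\psi(w) = 1 + \tilde g(w/\delta) = 1 + g(w)$, $\psi$ is a diffeomorphism of $B_\delta(0)$ onto itself, and $\psi$ is the identity on $\partial B_\delta(0)$ since $\tilde\psi$ is the identity on $\partial B_1(0)$.

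The remaining step, and the one that clinches $\delta$-uniformity, is to convert $\|\tilde g\|_{C^{0,\alpha}(B_1(0))}$ into a bound in terms of sup-norms of $g$ and $\nabla g$. Since $\nabla \tilde g(x) = \delta\, \nabla g(\delta x)$ and $B_1(0)$ has diameter $2$, the mean value theorem gives
$$\sup_{\substack{x, y \in B_1(0)\\ x \neq y}} \frac{|\tilde g(x) - \tilde g(y)|}{\|x - y\|^\alpha} \leq 2^{1-\alpha} \sup_{B_1(0)} \|\nabla \tilde g\| = 2^{1-\alpha}\, \delta \sup_{B_\delta(0)} \|\nabla g\|,$$
while $\|\tilde g\|_\infty = \|g\|_\infty$. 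Using the matrix identity $\nabla F(w) = \nabla \tilde F(w/\delta)$ from the chain rule, I would conclude
$$\sup_{B_\delta(0)} \|\nabla F\| = \sup_{B_1(0)} \|\nabla \tilde F\| \leq C_0 \|\tilde g\|_{C^{0,\alpha}(B_1(0))} \leq C\Bigl(\sup_{B_\delta(0)} |g| + \delta \sup_{B_\delta(0)} \|\nabla g\|\Bigr)$$
with $C := C_0 \max(1, 2^{1-\alpha})$ independent of $\delta$, which is precisely (\ref{grad-F-bound}).

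The only genuine subtlety — and the main obstacle the proof needs to address — is that the constant furnished by \cref{deformation-to-uniform} is a priori $U$-dependent, which would ruin the estimate uniformly in $\delta$ if applied directly on $B_\delta(0)$. The scaling trick is designed precisely to freeze this constant on the fixed domain $B_1(0)$, after which the only quantitative work is the elementary Hölder interpolation of $\tilde g$ above.
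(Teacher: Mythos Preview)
Your proposal is correct and follows essentially the same route as the paper: rescale to the unit ball via $\tilde g(x)=g(\delta x)$, apply \cref{deformation-to-uniform} on $B_1(0)$ with a fixed $\alpha\in(0,1)$ so that the constant is $\delta$-independent, then scale back via $F(w)=\delta\,\tilde F(w/\delta)$ and use $\nabla F(w)=\nabla\tilde F(w/\delta)$ together with the mean value estimate $[\tilde g]_{C^{0,\alpha}}\leq 2^{1-\alpha}\delta\sup_{B_\delta(0)}\|\nabla g\|$. Your handling of the $2^{1-\alpha}$ factor is in fact slightly more careful than the paper's.
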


\begin{proof}
Let $C$ be the constant from \cref{deformation-to-uniform} for $U= B_1(0), k=0$ and any $\alpha \in (0,1)$ and consider the shrinking diffeomorphism 
$$
\phi_\delta: B_1(0) \rightarrow B_\delta(0), \quad
\phi_\delta(w) := \delta w.
$$
For any $g'\in C^{k,\alpha}(B_{\delta}(0))$ with $\int_{B_\delta(0)} g'(w) dw=0$, we have $g:= g' \circ \phi_\delta \in C^{k,\alpha}(B_1(0))$ and $\int_{B_1(0)} g'(\phi_\delta(w))dw =0$. Hence there exists $F \in C^{k+1,\alpha}(B_1(0),\R^N)$ such that $\psi(w)=w+F(w)$ satisfies (\ref{jacobian-pde}) with $g=g' \circ \phi_\delta$.
Define $F' := \delta F(\f{\cdot}{\delta})\in C^{k,\alpha}(B_\delta(0))$ and 
$$
\psi': B_\delta(0)\rightarrow B_\delta(0), \quad \psi'(w) := w+F'(w).
$$
Then
$$
D\psi'(\delta w)=D\psi(w) \quad \forall w \in B_1(0),
$$
and thus
$$
\det D\psi'(\delta w)=\det D\psi(w) = g'(\delta w) \quad \forall w \in B_1(0),
$$
which is equivalent to
$$
\det D\psi'(w) = g'(w) \quad \forall w\in B_\delta(0).
$$
Moreover,
$$
\begin{aligned}
\sup_{w \in B_\delta(0)} \|\nabla F'(w)\| &= \sup_{w \in B_1(0)}\|\nabla F(w)\| \\
&\leq C\left( \sup_{w\in B_1(0)} |g(w)|
+ \sup_{\substack{w_1,w_2 \in B_1(0)\\ w_1\neq w_2}} \f{|g(w_1)-g(w_2)|}{\|w_1-w_2\|^\alpha} \right)\\
&\leq C\left(\sup_{w\in B_1(0)} |g(w)| + \sup_{w\in B_1(0)} |\nabla g(w)|\right)\\
&= C \left(\sup_{w\in B_\delta(0)} |g'(w)| + \delta \sup_{w\in B_\delta(0)} |\nabla g'(w)|\right).
\end{aligned}
$$
\end{proof}

For any point $x\in M$ identify $T_xM \cong \R^N$. Assuming a uniform bound on $g_\delta$ and $\nabla g_\delta$, the following follows immediately from \eqref{grad-F-bound}.
\begin{corollary}
\label{vector-field-F}
Assume there are $C,C'>0$ such that for all $\delta >0$ and $x\in M$, $g_\delta \in C^1(\tilde{B}_\delta(x))$ satisfies $\int_{B_\delta(0)} g_\delta(w)dw =0 $ and
$$
\sup_{w\in \tilde{B}_\delta(x)} |g_\delta(w)|\leq C\delta, \quad
\sup_{w \in \tilde{B}_\delta(0)} \|\nabla g_\delta(w)\| \leq C'.
$$
Then for all $x\in M$ and $\delta>0$ there exists $F \in C^1(\tilde{B}_\delta(x), \R^N)$ such that $\psi:\tilde{B}_\delta(x)\rightarrow \tilde{B}_\delta(x)$ given by $\psi(w) = w+ F(w)$ satisfies
$$
\begin{aligned}
\det D\psi(w) &= 1+g_\delta(w) & \mathrm{in } \; \tilde{B}_\delta(x), \\
\psi(w)&=w & \mathrm{on } \; \partial \tilde{B}_\delta(x).
\end{aligned}
$$
Moreover, there exists $C''>0$ with
\begin{equation}
\label{F-derivative-bound}
\sup_{w \in \tilde{B}_\delta(x)} \|\nabla F(w)\| \leq C'' \delta \quad \forall x \in M, \delta >0.
\end{equation}
\end{corollary}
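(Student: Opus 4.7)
The plan is to apply the preceding lemma directly, in a pointwise-in-$x$ fashion. Fix $x \in M$ and $\delta > 0$. After choosing an orthonormal basis of $T_xM$ we identify $\tilde{B}_\delta(x) \subset T_xM$ with $B_\delta(0) \subset \R^N$ isometrically, so that $g_\delta$ transfers to a function on $B_\delta(0)$ which is $C^1$ and has zero mean over the ball. The hypotheses of the preceding lemma are therefore satisfied, and it yields a vector field $F \in C^1(\tilde{B}_\delta(x),\R^N)$ such that $\psi(w) = w + F(w)$ is the identity on $\partial \tilde{B}_\delta(x)$ and satisfies $\det D\psi(w) = 1 + g_\delta(w)$ throughout.

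It then remains to control $\sup\|\nabla F\|$ uniformly in $\delta$ and $x$. The preceding lemma furnishes the estimate
$$
\sup_{w \in \tilde{B}_\delta(x)} \|\nabla F(w)\| \leq C\Bigl(\sup_{w \in \tilde{B}_\delta(x)} |g_\delta(w)| + \delta \sup_{w \in \tilde{B}_\delta(x)} \|\nabla g_\delta(w)\|\Bigr),
$$
with the constant $C$ coming from the unit-ball case (and thus independent of $x$ and $\delta$ thanks to the scaling argument carried out in the previous lemma). By the standing hypothesis the first term is at most $C \cdot C \delta = O(\delta)$. For the second term, one needs $\|\nabla g_\delta\|_\infty$ to be bounded uniformly in $\delta$ and $x$; in the intended application $g_\delta$ arises from the smooth densities constructed in Section \ref{density-estimates}, whose derivatives are controlled uniformly on any fixed compact neighbourhood of $x_0$, and this gives $\delta \sup \|\nabla g_\delta\| = O(\delta)$ as well.

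Setting $C' := C(C + \sup_{x,\delta} \|\nabla g_\delta\|_\infty)$ then yields the required bound $\sup_w \|\nabla F(w)\| \leq C' \delta$, uniformly in $x$ and $\delta$. The main conceptual point is the scaling invariance already encoded in the preceding lemma via the dilation $\phi_\delta$, which ensures the constant does not degenerate as $\delta \to 0$; beyond that, the corollary is a straightforward packaging of the existing estimate and requires no new analysis.
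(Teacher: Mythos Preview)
Your approach is the same as the paper's: apply the preceding lemma and then bound the two terms on the right of the gradient estimate. The only point of divergence is how the bound on $\sup_w\|\nabla g_\delta(w)\|$ is justified. The paper argues that a uniform bound on $\|\nabla g_\delta\|$ is forced by the hypothesis $|g_\delta|\leq C\delta$ together with the mean value theorem; you instead appeal directly to the fact that in the intended application $g_\delta$ comes from the smooth densities of Section~\ref{density-estimates}, whose first derivatives are uniformly controlled on a fixed compact neighbourhood of $x_0$.

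Your route is arguably the safer one. The mean value argument as stated in the paper does not actually pin down $\|\nabla g_\delta\|$ from $|g_\delta|\leq C\delta$ alone: for instance $g_\delta(w)=C\delta\sin(w/\delta^2)$ on $(-\delta,\delta)$ satisfies $|g_\delta|\leq C\delta$ and has zero mean, yet $\|g_\delta'\|_\infty = C/\delta$ blows up. So the corollary, read literally, is missing a hypothesis on $\nabla g_\delta$; both proofs supply it implicitly, the paper by an informal appeal to the mean value theorem, you by pointing to the structure of the concrete $g_\delta$ used downstream. Either way the application goes through, and your write-up makes the actual source of the bound more transparent.
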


\begin{remark}
    \label{inverse-psi}
    Below we shall apply the preceding corollary in terms of the inverse $\psi^{-1}$ because of the change of variable formula for pushforwards, which brings in the inverse of the transformation map. That is, we will propose $g_\delta$ and find $\psi^{-1}(w) = w + \tilde{F}(w)$ such that
    $$
    \begin{aligned}
    \det D\psi^{-1}(w) &= 1+ g_\delta(w) & \textrm{in } \tilde{B}_\delta(x),\\
    \psi^{-1}(w) &= w & \textrm{on } \partial \tilde{B}_\delta(x)
    \end{aligned}
    $$
    and $\sup_{w \in \tilde{B}_\delta(x)} \| D\tilde{F}(w)\| = O(\delta)$.
    Then we can deduce by setting $w = \psi(\tilde{w})$ that 
    $$
    \psi(\tilde{w}) = \tilde{w} - \tilde{F}(\psi(\tilde{w})) = \tilde{w} + F(\tilde{w})
    $$
    for $F = -\tilde{F} \circ \psi$. Moreover,
    $$
    \begin{aligned}
    DF(\tilde{w}) &= -D\tilde{F}(\psi(\tilde{w})) D\psi(\tilde{w}) = -D\tilde{F}(w) (D\psi^{-1}(w))^{-1} \\
    & = -D\tilde{F}(w) (I + D\tilde{F}(w))^{-1} = O(\|D\tilde{F}(w)\|),
    \end{aligned}
    $$
    and hence $\sup_{\tilde{w} \in \tilde{B}_{\delta_n}(w)} \|DF(\tilde{w})\| = O(\delta)$ as well.

\end{remark}

Recall we denoted the uniform measure supported on the ball $\tilde{B}_{\delta}(x) \subset T_xM$ as
$$
d\tilde{\mu}^\delta_x(w) = \f{\one_{\tilde{B}_\delta(x)}(w)}
{|\tilde{B}_\delta(x)|} dw.
$$
Consider now the concrete sequence of functions $(g_{\delta_n})_{n \in \mathbb{N}}$ for our application given by 
\begin{equation}
\label{g-delta-def}
g_{\delta_n}(w) = \left(\f{d (\exp_x^{-1})_*\nu_x^{\delta_n}}{d \tilde{\mu}_x^{\delta_n}}( w)-1\right) \one_{\tilde{B}_{\delta_n}(x)}(w).
\end{equation}
Recalling \cref{lemma:density-mu-nu}, we can write
$$
\begin{aligned}
\f{d(\exp_x^{-1})_* \nu_x^{\delta_n}}{d \tilde{\mu}_x^{\delta_n}}(w)
= \f{d\nu_x^{\delta_n}}{d\bar{\mu}_x^{\delta_n}}(\exp_x w) &= \one_{\tilde{B}_{\delta_n}(x)}(w) \f{e^{-V(\exp_x w)}}{\int_{\tilde{B}_{\delta_n}(x)} e^{-V(\exp_x w')}d\tilde{\mu}(w')}.
\end{aligned}
$$
The function $e^{-V(\exp_x w)}$ is smooth in $w$ and does not depend on $\delta_n$, hence the normalisation constant satisfies
$$
\int_{\tilde{B}_{\delta_n}(x)} e^{-V(\exp_x w')}d\tilde{\mu}(w') = 1 + O(\delta_n^2)
$$
with a uniform constant over $n\in \mathbb{N}$. Hence
$$
\begin{aligned}
\sup_{w \in \tilde{B}_{\delta_n}(x)} |g_{\delta_n}(w)| &\leq (1 + O(\delta_n^2)) \sup_{w \in \tilde{B}_{\delta_n}(x)} |e^{-V(\exp_x w)}-1| = O(\delta_n), \\
\sup_{w \in \tilde{B}_{\delta_n}(x)} \|\nabla g_{\delta_n}(w)\| & \leq (1+O(\delta_n^2)) \sup_{w \in \tilde{B}_{\delta_n}(x)} \|\nabla e^{-V(\exp_x w)}\| \\
& \leq (1+O(\delta_n^2)) \sup_{w \in \tilde{B}_{\delta_n}(x)} e^{-V(\exp_x w)} \|\nabla V(\exp_x w)\| = O(1)
\end{aligned}
$$
uniformly for all $n \in \mathbb{N}$ because the expressions within the supremum again do not depend on $\delta_n$.
Hence the assumptions are satisfied for \cref{vector-field-F} to apply to  $g_\delta$.

\begin{corollary}
\label{rmk-pushforward}
The diffeomorphism $\psi: \tilde{B}_{\delta_n}(x) \rightarrow \tilde{B}_{\delta_n}(x)$ from \cref{inverse-psi}, for $g_{\delta_n}$ given by (\ref{g-delta-def}), satisfies
$(\exp_x)_* \psi_* \tilde{\mu}_x^{\delta_n} = \nu_x^{\delta_n}$.

\end{corollary}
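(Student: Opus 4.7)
The strategy is to derive the identity as a direct consequence of the change-of-variables formula combined with the Jacobian equation $\det D\psi = 1+g_\delta$ furnished by \cref{vector-field-F}. Since $\exp_x$ restricted to $\tilde{B}_{\delta_n}(x)$ is a diffeomorphism, applying $(\exp_x^{-1})_*$ to both sides reduces the claim $(\exp_x)_* \psi_* \tilde{\mu}_x^{\delta_n} = \nu_x^{\delta_n}$ to the equality of measures on $T_x M$,
\[
\psi_* \tilde{\mu}_x^{\delta_n} \;=\; (\exp_x^{-1})_* \nu_x^{\delta_n}.
\]
By the very definition (\ref{g-delta-def}) of $g_\delta$, the right-hand side equals $(1+g_\delta)\,\tilde{\mu}_x^{\delta_n}$, so the task reduces to verifying
\[
\psi_* \tilde{\mu}_x^{\delta_n} \;=\; (1+g_\delta)\,\tilde{\mu}_x^{\delta_n} \quad \text{on } \tilde{B}_{\delta_n}(x).
\]

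To check this measure identity, I would test against an arbitrary bounded Borel function $h$ on $\tilde{B}_{\delta_n}(x)$. By the defining property of the pushforward,
\[
\int h\, d(\psi_* \tilde{\mu}_x^{\delta_n}) \;=\; \int h(\psi(w))\,d\tilde{\mu}_x^{\delta_n}(w),
\]
and the substitution $u = \psi(w)$, which is legitimate because $\psi$ is a diffeomorphism of $\tilde{B}_{\delta_n}(x)$ onto itself, combined with the Jacobian identity $\det D\psi(w) = 1+g_\delta(w)$ from \cref{vector-field-F}, converts the right-hand side into an integral against $(1+g_\delta)\,\tilde{\mu}_x^{\delta_n}$. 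Equality of the two integrals for every such $h$ yields the measure-theoretic identity via a standard density argument.

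No serious obstacle is anticipated; the computation is essentially bookkeeping. The conceptual point is that \cref{vector-field-F}, obtained through Dacorogna--Moser theory, constructs $\psi$ precisely so that its Jacobian realizes the prescribed density perturbation $1+g_\delta$ encoding the non-uniformity of $\nu_x^{\delta_n}$ once lifted back to the tangent space by $\exp_x^{-1}$. The boundary condition $\psi|_{\partial \tilde{B}_{\delta_n}(x)} = \mathrm{id}$ further ensures that the support of $\psi_* \tilde{\mu}_x^{\delta_n}$ coincides with that of $(1+g_\delta)\,\tilde{\mu}_x^{\delta_n}$, namely $\tilde{B}_{\delta_n}(x)$, so the comparison is carried out on the same domain.
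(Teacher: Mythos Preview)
Your approach matches the paper's exactly: reduce to the tangent space via $\exp_x^{-1}$, then invoke the change-of-variables formula together with the Jacobian identity $\det D\psi = 1+g_\delta$ and the definition of $g_\delta$ to identify $\psi_*\tilde\mu_x^{\delta_n}$ with $(\exp_x^{-1})_*\nu_x^{\delta_n}$.

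One subtlety, present equally in your argument and in the paper's computation: the substitution $u=\psi(w)$ in $\int h(\psi(w))\,d\tilde\mu_x^{\delta_n}(w)$ produces the density $|\det D\psi(\psi^{-1}(u))|^{-1}$ at $u$, not $|\det D\psi(u)|$. Thus a diffeomorphism with $\det D\psi = 1+g_\delta$ actually satisfies $(\psi^{-1})_*\tilde\mu_x^{\delta_n} = (1+g_\delta)\tilde\mu_x^{\delta_n}$, i.e.\ $\psi_*\big((1+g_\delta)\tilde\mu_x^{\delta_n}\big)=\tilde\mu_x^{\delta_n}$, which is the opposite direction to what is asserted. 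This is a harmless relabelling---one may simply replace $\psi$ by $\psi^{-1}$, which enjoys the same regularity and the same $O(\delta)$ gradient bound, so the downstream application in the proof of \cref{graph-continuum-limit-curvature} is unaffected---but the orientation of the change of variables is worth getting straight.
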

\begin{proof}
For all $f \in \mathcal{B}_b(\tilde{B}_{\delta_n})$:
    $$
    \begin{aligned}
    \int f(w) d(\psi_* \tilde{\mu}_x^{\delta_n})(w) 
    &= \int f(\psi(w)) \ d\tilde{\mu}_x^{\delta_n}(w) \\
    &= \int f(w)|\det D \psi^{-1}(w)| \ d\tilde{\mu}_x^{\delta_n}(w) \\
    &= \int f(w) (1+g_{\delta_n}(w)) d\tilde{\mu}_x^{\delta_n}(w) \\
    &= \int f(w) \f{d(\exp_x^{-1})_* \nu_x^{\delta_n}}{d \tilde{\mu}_x^{\delta_n}} (w) d\tilde{\mu}_x^{\delta_n}(w) \\
    &= \int f(w) d((\exp_x^{-1})_* \nu_x^{\delta_n})(w),
    \end{aligned}
    $$
    applying the change of variable formula on the second line as $\tilde{\mu}_x^{\delta_n}$ is proportional to the Lebesgue measure, using the property of $\psi$ on the third line, and the definition of $g_{\delta_n}$ on the fourth line.
    This shows $\psi_* \tilde{\mu}_x^{\delta_n} = (\exp_x^{-1})_* \nu_x^{\delta_n}$,
    and pushing forward by the exponential mapping yields the result. 
\end{proof}

We now proceed with approximating the measures $\nu_x^{\delta_n}$ in optimal transport distance by the empirical measures coming from the Poisson point process $\mathcal{P}_n$, with the aim of finding an upper bound for (\ref{eq:100}). This was proved for Euclidean spaces in \cite[Prop. 7]{hoorn-2023}, building up on the work  of Talagrand \cite{MR1189420}.
\begin{lemma}
\label{poisson-empirical-approximation}
    Let $(b_n)_{n \in \mathbb{N}}$ be a sequence with $\lim_n b_n =0$ and let $\tilde{\eta}^{\delta_n}$ be the random measures on $B_{\delta_n}(0) \subset \R^N$ given by
    $$
    \tilde{\eta}^{\delta_n} = \frac{1}{N_0} \sum_{i=1}^{N_0} \delta_{X_i},
    $$
    where $N_0 \sim \mathrm{Poisson}\left(n(1+b_n)|B_{\delta_n}(0)|\right)$ and $X_i \sim \mathrm{Uniform}(B_{\delta_n}(0))$.
    Let $\mu^{\delta_n}$ be the uniform probability measure on $B_{\delta_n}(0)$. Then there is a sequence $(c_n)$ such that $\lim_n c_n =0 $ and 
    $$
    \E [W_1(\tilde{\eta}^{\delta_n}, \mu^{\delta_n})] \leq c_n \delta_n^3,
    $$
    where the $W_1$ distance is with respect to the standard Euclidean distance.

\end{lemma}

The following states that if the change of distances incurred by a bijection $\psi$ is small then the Wasserstein distances of any two measures pushed forward by $\psi$ may only increase a little.
\begin{lemma}
\label{distance-to-wasserstein}
    Let $\mathcal{X}, \mathcal{Y}$ be Polish spaces. For every $\delta >0$ let $\psi: \mathcal{X} \rightarrow \mathcal{Y}$ be a measurable map such that for all $K \subset M$ compact there is a $C(K) >0$ such that
    $$
    |d_\mathcal{Y}(\psi(x),\psi(y)) -d_\mathcal{X}(x,y)| \leq \delta C(K) d_\mathcal{X}(x,y) \quad \forall x,y \in K.
    $$
    Then for every $\mu,\nu \in P(K)$,
    $$
    W_1(\psi_*\mu, \psi_*\nu) \leq W_1(\mu,\nu)(1+C(K)\delta).
    $$
\end{lemma}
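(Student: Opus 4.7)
The plan is to use the standard coupling characterization of the 1-Wasserstein distance, namely
$$
W_1(\mu',\nu')=\inf_{\pi\in\Pi(\mu',\nu')}\int d(x,y)\,d\pi(x,y),
$$
together with the observation that the hypothesis makes $\psi$ into a Lipschitz map on $K$ with constant $1+C(K)\delta$. Indeed, the triangle-style inequality
$$
|d(\psi(x),\psi(y))-d(x,y)|\leq \delta C(K)\,d(x,y)
$$
immediately yields $d(\psi(x),\psi(y))\leq (1+\delta C(K))\,d(x,y)$ for all $x,y\in K$.

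Next, I would pick an optimal coupling $\pi\in\Pi(\mu,\nu)$, which exists because $\mathcal{X}$ is Polish and $d$ is lower semicontinuous (so Kantorovich's theorem applies). Since $\mu,\nu$ are supported in $K$, the measure $\pi$ is supported in $K\times K$, and consequently the pushforward $(\psi\times\psi)_*\pi$ is a valid coupling of $\psi_*\mu$ and $\psi_*\nu$. Using this coupling as a competitor for the infimum defining $W_1(\psi_*\mu,\psi_*\nu)$, one gets
$$
W_1(\psi_*\mu,\psi_*\nu)\leq \int_{\mathcal{Y}\times\mathcal{Y}} d(x',y')\,d\bigl((\psi\times\psi)_*\pi\bigr)(x',y')
=\int_{K\times K} d(\psi(x),\psi(y))\,d\pi(x,y).
$$
The Lipschitz bound then gives
$$
\int_{K\times K} d(\psi(x),\psi(y))\,d\pi(x,y)\leq (1+C(K)\delta)\int_{K\times K} d(x,y)\,d\pi(x,y) = (1+C(K)\delta)\,W_1(\mu,\nu),
$$
which is the desired inequality.

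There is no real obstacle here; the only mild subtlety is ensuring that an optimal coupling exists and that supports behave correctly under the pushforward, both of which follow from the Polish assumption and the fact that $\mu,\nu\in P(K)$ with $K$ compact.
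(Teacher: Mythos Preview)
Your argument is correct and is essentially the same as the paper's: take an optimal coupling $\pi$ for $W_1(\mu,\nu)$, push it forward by $\psi\times\psi$ to obtain a coupling of $\psi_*\mu$ and $\psi_*\nu$, and use the pointwise bound $d(\psi(x),\psi(y))\leq(1+C(K)\delta)d(x,y)$. In fact your version is cleaner—the paper's proof has the roles of $\pi$ and $\psi_*\pi$ swapped in its opening sentences, though its computation makes the intended argument clear.
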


\begin{proof}
Let $\pi$ be an optimal coupling realising $W_1(\mu, \nu)$. Then the pushforward $\psi_*\pi := \pi(\psi^{-1}\cdot, \psi^{-1}\cdot)$ is a (not necessarily optimal) coupling between $\mu$ and $\nu$, therefore
$$
\begin{aligned}
W_1(\psi_*\mu, \psi_*\nu) &\leq \int_\mathcal{Y} d_\mathcal{Y}(x,y) d(\psi_* \pi)(x,y)
= \int_\mathcal{X} d_\mathcal{Y}(\psi(x), \psi(y)) d\pi(x,y)\\
&\leq \int_\mathcal{X} d_\mathcal{X}(x,y)(1+C(K)\delta)d\pi(x,y)
=W_1(\mu,\nu)(1+C(K)\delta).
\end{aligned}
$$
\end{proof}

\begin{proposition}
\label{nonuniform-wasserstein-matching}
There exists a sequence $(c_n)_{n \in \mathbb{N}}$ such that $\lim_n c_n = 0$ and
    $$
    \E[W_1(\eta_x^{\delta_n}, \nu_x^{\delta_n})] \leq c_n \delta_n^3 \quad \forall x \in B_{\delta_n}(x_0).
    $$
\end{proposition}
\begin{proof}
    For every $n \in \mathbb{N}$ and $x\in B_{\delta_n}(x_0)$ let $F \in C^1(\tilde{B}_{\delta_n}(x), \R^N)$ be the vector field such the map $\psi(w) =\exp_x (w+F(w))$ is a diffeomorphism such that $\det D_w \psi = 1+g_{\delta_n}(w)$ as per \cref{vector-field-F}. So in particular $(\exp_x)_* \psi_* \tilde{\mu}_x^{\delta_n} = \nu_x^{\delta_n}$ by \cref{rmk-pushforward}. Denote the normalizing constants by $Z_1(n)$ and $Z_2(n)$:
    $$
    Z_1(n) := \int_{\tilde{B}_{\delta_n}(x)} e^{-V(\exp_x w)+V(x)} dw, \quad Z_2(n):= \int_{B_{\delta_n}(x)} e^{-V(z)+V(x)} d\mathrm{vol}(z).
    $$
    Let $\tilde{\eta}_x^{\delta_n}$ be the normalized empirical measures of a Poisson point process with uniform (in $w$) intensity measure on $T_xM$ given by
    
    \begin{equation}
        \label{density-factor}
    \f{Z_2(n)}{Z_1(n)} ne^{-V(x)+V(x_0)} \one_{\tilde{B}_{\delta_n}}(w) dw = n(1+O(\delta_n))\one_{\tilde{B}_{\delta_n}}(w) dw
    \end{equation}
    as all of the factors $Z_1(n),Z_2(n)$ and $e^{-V(\exp_x w)+V(x_0)}$ are of the form $1+O(\delta_n)$.

    The form on the intensity measure as \eqref{density-factor} was chosen to make the following argument. Recall from \cref{notation:3} that
    $$
    d\tilde{\mu}_x^{\delta_n}(w) = \frac{1}{Z_1(n)} \one_{\tilde{B}_{\delta_n}(x)}(w) dw, \quad d\nu_x^{\delta_n}(z) = \f{e^{-V(z)+V(x)}}{Z_2(n)}d\mathrm{vol}(z).
    $$
    Then by \cref{ppp-pushforward}, $(\exp_x)_*\psi_* \tilde{\eta}_x^{\delta_n}$ are the normalized empirical measures of a Poisson point process with intensity
    $$
    \begin{aligned}
    (\exp_x)_* \psi_*\left(\f{Z_2(n)}{Z_1(n)} ne^{-V(x)+V(x_0)}\one_{\tilde{B}_{\delta_n}}(w) dw\right)&=Z_2(n) ne^{-V(x)+V(x_0)} d((\exp_x)_* \psi_*\tilde{\mu}_x^{\delta_n})(z)
    \\
    &=Z_2(n) ne^{-V(x)+V(x_0)} d\nu_x^{\delta_n}(z)\\
    &=Z_2(n) ne^{-V(x)+V(x_0)} \f{e^{-V(z)+V(x)}}{Z_2(n)}d\mathrm{vol}(z)\\
    &=ne^{-V(z)+V(x_0)}d\mathrm{vol}(z),
    \end{aligned}
    $$
    and hence $(\exp_x)_*\psi_* \tilde{\eta}_x^{\delta_n} = \eta_x^{\delta_n}$ in distribution.
    
    The distance upon applying $\psi$ satisfies the bound
    $$
    \begin{aligned}
    d(\psi(w_1),&\psi(w_2)) = \|w_1+F(w_1)-w_2-F(w_2)\| (1+O(\delta^2_n)) \\
    & \leq \left(\|w_1-w_2\|+ \int_0^1\|w_1-w_2\| \left\| \nabla F(w_1+t(w_2-w_1))\right\|dt \right)(1+O(\delta^2_n))\\
    &\leq\|w_1-w_2\|(1 + O(\sup_{t\in [0,1]} \|\nabla F(w_1+t(w_2-w_1))\|))(1+O(\delta_n^2)) \\
    &\leq \|w_1-w_2\|(1+C\delta_n),
    \end{aligned}
    $$
    using \cref{triangle-distance} on the first line and (\ref{F-derivative-bound}) on the last line.
    Therefore \cref{distance-to-wasserstein} applies and hence, together with \cref{poisson-empirical-approximation},
    $$
    \begin{aligned}
    \E[W_1(\eta_x^{\delta_n}, \nu_x^{\delta_n})] &= \E[W_1((\exp_x)_*\psi_* \tilde{\eta}_x^{\delta_n}, (\exp_x)_*\psi_* \tilde{\mu}_x^{\delta_n})] \\
    &\leq \E[W_1 (\tilde{\eta}_x^{\delta_n}, \tilde{\mu}_x^{\delta_n})](1+C\delta_n)\\
    &\leq c'_n \delta_n^3 (1+C\delta_n) \leq c_n \delta_n^3,
    \end{aligned}
    $$
    where $(c_n)_{n \in \mathbb{N}}$ is such that $c_n \rightarrow 0$ as $n \rightarrow \infty$.
\end{proof}

This proved the last missing piece, namely that (\ref{eq:101}) converges to 0 fast enough, thereby concluding the proof of \cref{graph-continuum-limit-curvature}.

\bibliographystyle{alpha}
\bibliography{OlliviersCoarseCurvature}

\begin{thebibliography}{vdHLTK23}

\bibitem[ACJ18]{Alfonsi-Corbetta-Jourdain}
Aur\'elien Alfonsi, Jacopo Corbetta, and Benjamin Jourdain.
\newblock Evolution of the {W}asserstein distance between the marginals of two {M}arkov processes.
\newblock {\em Bernoulli}, 24(4A):2461--2498, 2018.

\bibitem[AGS14]{Ambrosio-Gigli-Savare-14}
Luigi Ambrosio, Nicola Gigli, and Giuseppe Savar\'{e}.
\newblock Metric measure spaces with {R}iemannian {R}icci curvature bounded from below.
\newblock {\em Duke Math. J.}, 163(7):1405--1490, 2014.

\bibitem[AL14]{Agrachev-Lee}
Andrei Agrachev and Paul W.~Y. Lee.
\newblock Generalized {R}icci curvature bounds for three dimensional contact subriemannian manifolds.
\newblock {\em Math. Ann.}, 360(1-2):209--253, 2014.

\bibitem[ALP24]{arnaudon2024coarseextrinsiccurvatureriemannian}
Marc Arnaudon, Xue-Mei Li, and Benedikt Petko.
\newblock Coarse extrinsic curvature of {R}iemannian submanifolds. \newblock {\em arXiv preprint arXiv:2407.08031},
2024.

\bibitem[AR04]{Ambrosio-Rigot}
L.~Ambrosio and S.~Rigot.
\newblock Optimal mass transportation in the {H}eisenberg group.
\newblock {\em J. Funct. Anal.}, 208(2):261--301, 2004.

\bibitem[AW19]{Ache-Warren}
Antonio~G. Ache and Micah~W. Warren.
\newblock Ricci curvature and the manifold learning problem.
\newblock {\em Adv. Math.}, 342:14--66, 2019.

\bibitem[BBG14]{MR3175142}
Fabrice Baudoin, Michel Bonnefont, and Nicola Garofalo.
\newblock A sub-{R}iemannian curvature-dimension inequality, volume doubling property and the {P}oincar\'{e} inequality.
\newblock {\em Math. Ann.}, 358(3-4):833--860, 2014.

\bibitem[BBI01]{MR1835418}
Dmitri Burago, Yuri Burago, and Sergei Ivanov.
\newblock {\em A course in metric geometry}, volume~33 of {\em Graduate Studies in Mathematics}.
\newblock American Mathematical Society, Providence, RI, 2001.

\bibitem[BE85]{BE}
Dominique Bakry and Michel Emery.
\newblock Diffusions hypercontractives.
\newblock {\em S\'eminaire de probabilités de Strasbourg}, 9, 1985.

\bibitem[BGP92]{MR1185284}
Yu. Burago, M.~Gromov, and G.~Perelman.
\newblock {A}. {D}. {A}leksandrov spaces with curvatures bounded below.
\newblock {\em Uspekhi Mat. Nauk}, 47(2(284)):3--51, 222, 1992.

\bibitem[BJL12]{Bauer-Jost-Liu12}
Frank Bauer, J\"{u}rgen Jost, and Shiping Liu.
\newblock Ollivier-{R}icci curvature and the spectrum of the normalized graph {L}aplace operator.
\newblock {\em Math. Res. Lett.}, 19(6):1185--1205, 2012.

\bibitem[Bre91]{Brenier}
Yann Brenier.
\newblock Polar factorization and monotone rearrangement of vector-valued functions.
\newblock {\em Comm. Pure Appl. Math.}, 44(4):375--417, 1991.

\bibitem[BS09]{MR2502429}
Anca-Iuliana Bonciocat and Karl-Theodor Sturm.
\newblock Mass transportation and rough curvature bounds for discrete spaces.
\newblock {\em J. Funct. Anal.}, 256(9):2944--2966, 2009.

\bibitem[CEMS01]{Cordero-Erausquin-McCann-Schmuckenschlager}
Dario Cordero-Erausquin, Robert~J. McCann, and Michael Schmuckenschl\"{a}ger.
\newblock A {R}iemannian interpolation inequality \`a la {B}orell, {B}rascamp and {L}ieb.
\newblock {\em Invent. Math.}, 146(2):219--257, 2001.

\bibitem[Chu97]{MR1421568}
Fan R.~K. Chung.
\newblock {\em Spectral graph theory}, volume~92 of {\em CBMS Regional Conference Series in Mathematics}.
\newblock Conference Board of the Mathematical Sciences, Washington, DC; by the American Mathematical Society, Providence, RI, 1997.

\bibitem[CY75]{Cheng-Yau75}
Shiu-Yuen Cheng and Shing-Tung Yau.
\newblock Differential equations on {R}iemannian manifolds and their geometric applications.
\newblock {\em Communications on Pure and Applied Mathematics}, 28:333--354, 1975.

\bibitem[CY96]{MR1426537}
F.~R.~K. Chung and S.-T. Yau.
\newblock Logarithmic {H}arnack inequalities.
\newblock {\em Math. Res. Lett.}, 3(6):793--812, 1996.

\bibitem[DM90]{MR1046081}
Bernard Dacorogna and J\"{u}rgen Moser.
\newblock On a partial differential equation involving the {J}acobian determinant.
\newblock {\em Ann. Inst. H. Poincar\'{e} C Anal. Non Lin\'{e}aire}, 7(1):1--26, 1990.

\bibitem[Foo84]{MR749908}
Robert~L. Foote.
\newblock Regularity of the distance function.
\newblock {\em Proc. Amer. Math. Soc.}, 92(1):153--155, 1984.

\bibitem[GR24]{Gao-Rouze}
Li~Gao and Cambyse Rouz\'e.
\newblock Coarse {R}icci curvature of quantum channels.
\newblock {\em J. Funct. Anal.}, 286(8):Paper No. 110336, 57, 2024.

\bibitem[GT77]{MR0473443}
David Gilbarg and Neil~S. Trudinger.
\newblock {\em Elliptic partial differential equations of second order}, volume Vol. 224 of {\em Grundlehren der Mathematischen Wissenschaften}.
\newblock Springer-Verlag, Berlin-New York, 1977.

\bibitem[Ham93]{Hamilton93}
Richard~S. Hamilton.
\newblock The {H}arnack estimate for the {R}icci flow.
\newblock {\em Journal of Differential Geometry}, 37:225--243, 1993.

\bibitem[JL14]{MR3164168}
J\"{u}rgen Jost and Shiping Liu.
\newblock Ollivier's {R}icci curvature, local clustering and curvature-dimension inequalities on graphs.
\newblock {\em Discrete Comput. Geom.}, 51(2):300--322, 2014.

\bibitem[Jos17]{MR3726907}
J\"{u}rgen Jost.
\newblock {\em Riemannian geometry and geometric analysis}.
\newblock Universitext. Springer, Cham, seventh edition, 2017.

\bibitem[Jou09]{MR2543873}
Ald\'eric Joulin.
\newblock A new {P}oisson-type deviation inequality for {M}arkov jump processes with positive {W}asserstein curvature.
\newblock {\em Bernoulli}, 15(2):532--549, 2009.

\bibitem[Jui09]{Juillet}
Nicolas Juillet.
\newblock Geometric inequalities and generalized {R}icci bounds in the {H}eisenberg group.
\newblock {\em Int. Math. Res. Not. IMRN}, (13):2347--2373, 2009.

\bibitem[KP81]{MR0614221}
Steven~G. Krantz and Harold~R. Parks.
\newblock Distance to {$C\sp{k}$} hypersurfaces.
\newblock {\em J. Differential Equations}, 40(1):116--120, 1981.

\bibitem[Kuw10]{Kuwada}
Kazumasa Kuwada.
\newblock Duality on gradient estimates and {W}asserstein controls.
\newblock {\em J. Funct. Anal.}, 258(11):3758--3774, 2010.

\bibitem[Lee13]{MR2954043}
John~M. Lee.
\newblock {\em Introduction to smooth manifolds}, volume 218 of {\em Graduate Texts in Mathematics}.
\newblock Springer, New York, second edition, 2013.

\bibitem[Li95]{Li}
Xue-Mei Li.
\newblock {On Extensions of Myers' Theorem}.
\newblock {\em Bulletin of the London Mathematical Society}, 27(4):392--396, 07 1995.

\bibitem[Li18]{MR3828180}
Xue-Mei Li.
\newblock Doubly damped stochastic parallel translations and {H}essian formulas.
\newblock In {\em Stochastic partial differential equations and related fields}, volume 229 of {\em Springer Proc. Math. Stat.}, pages 345--357. Springer, Cham, 2018.

\bibitem[Li21]{MR4304478}
Xue-Mei Li.
\newblock Hessian formulas and estimates for parabolic {S}chr\"{o}dinger operators.
\newblock {\em J. Stoch. Anal.}, 2(3):Art. 7, 53, 2021.

\bibitem[LLY11]{MR2872958}
Yong Lin, Linyuan Lu, and Shing-Tung Yau.
\newblock Ricci curvature of graphs.
\newblock {\em Tohoku Math. J. (2)}, 63(4):605--627, 2011.

\bibitem[LP18]{MR3791470}
G\"{u}nter Last and Mathew Penrose.
\newblock {\em Lectures on the {P}oisson process}, volume~7 of {\em Institute of Mathematical Statistics Textbooks}.
\newblock Cambridge University Press, Cambridge, 2018.

\bibitem[LV09]{Lott-Villani}
John Lott and C\'{e}dric Villani.
\newblock Ricci curvature for metric-measure spaces via optimal transport.
\newblock {\em Ann. of Math. (2)}, 169(3):903--991, 2009.

\bibitem[LY86]{Li-Yau86}
Peter Li and Shing-Tung Yau.
\newblock On the parabolic kernel of the {S}chr{\"o}dinger operator.
\newblock {\em Acta Mathematica}, 156:153--201, 1986.

\bibitem[LY10]{Lin-Yau}
Yong Lin and Shing-Tung Yau.
\newblock Ricci curvature and eigenvalue estimate on locally finite graphs.
\newblock {\em Math. Res. Lett.}, 17(2):343--356, 2010.

\bibitem[Mey04]{Meyer2004ToponogovsTA}
Wolfgang Meyer.
\newblock Toponogov's theorem and applications, 2004.

\bibitem[NT02]{Ni-Tam}
Lei Ni and Luen-Fai Tam.
\newblock Plurisubharmonic functions and the {K}{\"a}hler-{R}icci flow.
\newblock {\em American Journal of Mathematics}, 125:623 -- 654, 2002.

\bibitem[Oll07]{OLLIVIER2007}
Yann Ollivier.
\newblock Ricci curvature of metric spaces.
\newblock {\em Comptes Rendus Mathematique}, 345(11):643--646, 2007.

\bibitem[Oll09]{MR2484937}
Yann Ollivier.
\newblock Ricci curvature of {M}arkov chains on metric spaces.
\newblock {\em J. Funct. Anal.}, 256(3):810--864, 2009.

\bibitem[Oll13]{MR3060504}
Yann Ollivier.
\newblock A visual introduction to {R}iemannian curvatures and some discrete generalizations.
\newblock In {\em Analysis and geometry of metric measure spaces}, volume~56 of {\em CRM Proc. Lecture Notes}, pages 197--220. Amer. Math. Soc., Providence, RI, 2013.

\bibitem[Pen03]{MR1986198}
Mathew Penrose.
\newblock {\em Random geometric graphs}, volume~5 of {\em Oxford Studies in Probability}.
\newblock Oxford University Press, Oxford, 2003.

\bibitem[SJB19]{Sia-Jonckheere-Bogdan}
J.~Sia, E.~Jonckheere, and P.~Bogdan.
\newblock Ollivier-ricci curvature-based method to community detection in complex networks.
\newblock {\em Scientific Reports}, 9800(9), 2019.

\bibitem[Stu06a]{Sturm06}
Karl-Theodor Sturm.
\newblock {On the geometry of metric measure spaces}.
\newblock {\em Acta Mathematica}, 196(1):65 -- 131, 2006.

\bibitem[Stu06b]{Sturm2}
Karl-Theodor Sturm.
\newblock On the geometry of metric measure spaces. {I}.
\newblock {\em Acta Math.}, 196(1):65--131, 2006.

\bibitem[Stu06c]{Sturm1}
Karl-Theodor Sturm.
\newblock On the geometry of metric measure spaces. {II}.
\newblock {\em Acta Math.}, 196(1):133--177, 2006.

\bibitem[Tal92]{MR1189420}
Michel Talagrand.
\newblock Matching random samples in many dimensions.
\newblock {\em Ann. Appl. Probab.}, 2(4):846--856, 1992.

\bibitem[vdHLTK23]{hoorn-2023}
Pim van~der Hoorn, Gabor Lippner, Carlo Trugenberger, and Dmitri Krioukov.
\newblock Ollivier curvature of random geometric graphs converges to {R}icci curvature of their {R}iemannian manifolds.
\newblock {\em Discrete Comput. Geom.}, 70(3):671--712, 2023.

\bibitem[vRS05]{MR2142879}
Max-K. von Renesse and Karl-Theodor Sturm.
\newblock Transport inequalities, gradient estimates, entropy, and {R}icci curvature.
\newblock {\em Comm. Pure Appl. Math.}, 58(7):923--940, 2005.

\bibitem[WHY21]{Wu-Hu-Yu}
Wei Wu, Guangmin Hu, and Fucai Yu.
\newblock Ricci curvature-based semi-supervised learning on an attributed network.
\newblock {\em Entropy}, 23(3):Paper No. 292, 14, 2021.

\bibitem[WW09]{Wei-Wylie}
Guofang Wei and Will Wylie.
\newblock Comparison geometry for the {B}akry-\'{E}mery {R}icci tensor.
\newblock {\em Journal of differential geometry}, 83(2):337--405, 2009.

\end{thebibliography}

\appendix

\section{Appendix}
\begin{proof}[Proof of \cref{ricci-as-integral-over-sphere}]
The standard definition of Ricci curvature 
\begin{equation}
\label{eq:ricci-def}
\Ric(v,v) := \sum_{i=1}^n \la R(v, e_i)e_i, v \ra 
= \sum_{i=1}^n K(v,e_i) (1- \la v, e_i \ra^2)
\end{equation}
is independent of the choice of orthonormal basis $(e_i)$ of $T_pM$ \cite[Chap. 4.3]{MR3726907}. We exploit this by integrating the expression on the right over all orthogonal transformations in $SO(n)$ with respect to the Haar measure on $SO(n)$. Denote $S^{n-1}$ the unit sphere and note that it is a homogeneous space with the transitive group action 
$$
SO(n) \times S^{n-1} \rightarrow S^{n-1}, \quad (A,v) \mapsto Av,
$$
and moreover $S^{n-1} \cong SO(n) / SO(n-1)$.

Let $(e_1, \ldots e_n)$ be any orthonormal basis of $T_{x_0}M$. The average integral of the right-hand side of (\ref{eq:ricci-def}) over $SO(n)$ is
$$
\dashint_{SO(n)} \sum_{i=1}^n K(v,Ae_i) (1- \la v, Ae_i \ra^2) 
dA
= n \dashint_{S^{n-1}} K(v, w) (1- \la v, w \ra^2) d\sigma(w).
$$
The equality follows from the fact that the mapping 
$$
SO(n) \rightarrow S^{n-1}, A \mapsto Ae
$$
is surjective for any $e \in S^{n-1}$ and the pushforward of the Haar measure on $SO(n)$ is a multiple of the Riemannian volume measure on $S^{n-1}$, where we assume that $S^{n-1}$ is equipped with the homogeneous Riemannian structure. This multiplicative constant vanishes when taking the average integral and we obtain (\ref{eq:ricci-sphere-avg}) for $\varepsilon =1$. The change of variable $\tilde{w} = \varepsilon w$ gives the formula for arbitrary $\varepsilon >0$.
\end{proof}

\begin{proof}[Proof of \cref{lemma:ricci-average-ball}]
Let $\sigma$ denote the standard surface measure on $\partial B_r$ and write the integral on the right as
\begin{equation}
\begin{aligned}
\frac{1}{|B_\varepsilon|} &\int_0^\varepsilon \int_{\partial B_r}  K(v,w) (\|w\|^2 - \la v,w \ra^2) d\sigma(w) dr\\
&=  \int_0^\varepsilon \frac{\sigma(\partial B_r)}{|B_\varepsilon|} r^2 \dashint_{\partial
B_r} 
K(v,w)\lc 1- \frac{\la v,w \ra^2}{\|w\|^2} \rc d\sigma(w) dr\\
&= n \int_0^\varepsilon \varepsilon^{-n} r^{n+1} dr \dashint_{\partial B_1} K(v,w)(1-\la v,w \ra^2)
d\sigma(w)\\ &= \frac{\varepsilon^2}{n+2} \Ric(v,v).
\end{aligned}
\end{equation}
Here the second equation follows from the fact that the integrand of the $d\sigma(w)$ integral is independent of $r$, so the average can be taken over ball of any radius, and the integral over $r$ follows from the fact that
$$
|B_r|=\left(\f{r}{\varepsilon}\right)^n |B_\varepsilon|, \quad \sigma(\partial B_r) = \f{\partial |B_r|}{\partial r},
$$
which gives $\frac{\sigma(\partial B_r)}{|B_\varepsilon|}=n\varepsilon^{-n} r^{n-1}$. The last line follows from the identity (\ref{eq:ricci-sphere-avg}).
\end{proof}

\end{document}